\documentclass[12pt]{amsart}
\usepackage{amsmath,amsthm}
\usepackage{amsbsy}
\usepackage{amsfonts}
\usepackage{amstext}
\usepackage{amssymb}
\usepackage{MnSymbol}
\usepackage{mathbbol}
\usepackage{wasysym}
\usepackage{eucal}
\usepackage{mathtools}
\usepackage{stackrel}
\usepackage{ulem}

\usepackage{tikz, tikz-cd}
   \usetikzlibrary{matrix}
   \usetikzlibrary{calc}
   \usetikzlibrary{decorations}

\usepackage{float}
\usepackage{subfigure}
\usepackage{pdflscape}
\usepackage{enumitem}

\usepackage{placeins} 

\allowdisplaybreaks

\usepackage[margin=1in]{geometry} 

\usepackage{color}
\usepackage{multirow,bigdelim}

\numberwithin{equation}{section} 
\newtheorem{thm}[equation]{Theorem} 
\newtheorem*{theorem*}{Theorem}

\newtheorem{prop}[equation]{Proposition}
\newtheorem{lemma}[equation]{Lemma}

\newtheorem{definition}[equation]{Definition}

\newtheorem{notation}[equation]{Notation}  

\theoremstyle{definition}
\newtheorem{method}[equation]{Method} 
\newtheorem{rem}[equation]{Remark}


\renewcommand{\hom}[1]{\ensuremath{\textnormal{Hom}_{#1}}}
\newcommand{\ext}[1]{\ensuremath{\textnormal{Ext}_{#1}}}

\renewcommand{\ker}{\ensuremath{\textnormal{ker}}}
\newcommand{\im}{\ensuremath{\textnormal{im}}}


\newcommand{\ol}[1]	{\ensuremath{\overline{#1}}}

\newcommand{\p}{\ensuremath{^{\prime}}}

\newcommand{\opp}{\ensuremath{{}^\text{op}}}


\newcommand{\rar}{\rightarrow}

\newcommand{\srar}{\twoheadrightarrow}

\newcommand{\brac}[1]{\ensuremath{\langle #1 \rangle}}

\newcommand{\ground}{\ensuremath{\mathbb k}}		
\newcommand{\quiv}	{\ensuremath{Q}} 				
\newcommand{\alg}{\ensuremath{A}}			
\newcommand{\coho}{\ensuremath{\mathsf{H}}}		
\newcommand{\hh}{\ensuremath{\mathsf{HH}}}		
\newcommand{\barcx}{\ensuremath{\mathbb{B}}}		
\newcommand{\bard}{\ensuremath{d}}			
\newcommand{\bas}[1]{\ensuremath{{\mathcal {B}}_{#1}}}	
\newcommand{\g}[2]{\ensuremath{g_{#1}^{#2}}}		
\newcommand{\vir}{\textnormal{Vir}}

\newcommand{\ZZ}{\ensuremath{{\mathbb Z}}}
\newcommand{\id}{\mathrm{id}}
\newcommand{\T}{\otimes}


\begin{document}

\subjclass[2010]{16E40,16D70,16W25,16W50}

\keywords{Hochschild cohomology, Gerstenhaber brackets, Virasoro algebras, Lie algebras}

\title[Gerstenhaber structure of a class of special biserial algebras]{The Gerstenhaber structure on the Hochschild cohomology of a class of special biserial algebras}

\author[J.\ Meinel]{Joanna Meinel}
\address{Mathematical Institute, University of Bonn, Germany}
\email{joanna@math.uni-bonn.de}

\author[V.\ C.\ Nguyen]{Van C.~Nguyen}
\address{Department of Mathematics, Hood College, Frederick, MD 21701, U.S.A.}
\email{nguyen@hood.edu}

\author[B. Pauwels]{Bregje Pauwels} 
\address{Mathematical Sciences Institute, The Australian National University, Acton ACT 2601, Australia}
\email{bregje.pauwels@anu.edu.au}

\author[M.\ J.\ Redondo]{Maria Julia Redondo}
\address{Departamento de Matem\'atica, Instituto de Matem\'{a}tica (INMABB), Universidad Nacional del Sur (UNS)-CONICET, Bah\'{i}a Blanca, Argentina}
\email{mredondo@uns.edu.ar}

\author[A.\ Solotar]{Andrea Solotar}
\address{Departamento de Matem\'{a}tica, Universidad de Buenos Aires and IMAS, CONICET/UBA, Buenos Aires, Argentina}
\email{asolotar@dm.uba.ar}

\date{\today}


\begin{abstract}
We determine the Gerstenhaber structure on the Hochschild cohomology ring of a class of self-injective special biserial algebras. Each of these algebras is presented as a quotient of the path algebra of a certain quiver. In degree one, we show that the cohomology is isomorphic, as a Lie algebra, to a direct sum of copies of a subquotient of the Virasoro algebra. These copies share Virasoro degree 0 and commute otherwise. Finally, we describe the cohomology in degree $n$ as a module over this Lie algebra by providing its decomposition as a direct sum of indecomposable modules.
\end{abstract}

\maketitle



\section{Introduction}
\label{sec:intro}

Let $\ground$ be a field and fix an integer $m\geq 1$. Following \cite{ST}, we consider the path algebra 
$\ground\quiv$ of the quiver $\quiv$ with $m$ vertices $\{0,1,\ldots, m-1\}$ and $2m$ arrows given by $(a_i: i \rightarrow i+1)$ and $(\ol{a}_i: i+1 \rightarrow i)$ for $i\in\{0,1,\ldots,m-1\}$. The composition of arrows is ordered from left to right, for example $a_ia_{i+1}$ denotes the path $i \xrightarrow{a_i} i+1 \xrightarrow{a_{i+1}} i+2$. For the rest of the paper, we take all indices modulo $m$, that is, we identify the labeling set of the vertices with $\ZZ/m\ZZ$.

\begin{figure}[H]\label{fig:quiver}
\begin{center}
\begin{tikzpicture}[scale=0.6]
\begin{scope}[xshift=0cm]
\path[use as bounding box] (-3.5,-1.5) rectangle (3.5,2.5);
\foreach \x in {6,7,0,1,2}
\fill ({90-\x*45}:2.5cm) circle (1.2mm);

\foreach \x in {5,6,7,0,1,2}
\draw[thick, ->] ({85-\x*45}:2.65cm) arc ({85-\x*45}:{50-\x*45}:2.65cm);

\foreach \x in {5,6,7,0,1,2}
\draw[thick, <-] ({85-\x*45}:2.35cm) arc ({85-\x*45}:{50-\x*45}:2.35cm);

\draw[thick, dotted] ({85-3*45}:2.5cm) arc ({85-3*45}:{50-4*45}:2.5cm);

\draw ({90-1*45}:3cm) node {$i$};
\draw ({90-2*45}:3.35cm) node {$i+1$};

\draw (22:1.9cm) node {$\ol{a}_i$};
\draw (22:3.1cm) node {$a_i$};

\end{scope}

\end{tikzpicture}
\end{center} \vskip 5pt
\caption{The quiver $\quiv$}
\end{figure}

For any integer $N \geq1$, we consider the quotient algebra 
$$\alg_N:=\ground\quiv/ \brac {a_i a_{i+1},\ \ol{a}_{i+1}\ol{a}_i,\ (a_i\ol{a}_i)^N - (\ol{a}_{i-1}a_{i-1})^N \mid i \in\ZZ/m\ZZ}.$$
We note that the algebra $\alg_N$ is self-injective and special biserial. Special biserial algebras play an important role in the representation theory of algebras; for example,
the study of Hochschild cohomology for blocks of group algebras with cyclic or dihedral 
defect groups is achieved using the fact that the basic algebras that appear are special biserial \cite{Ho1, Ho2}. As a consequence, several articles have been devoted in recent years to the computation of the Hochschild cohomology of self-injective special biserial algebras, see for examples \cite{ES, X} and the references therein. 

The case $N=1$ has been studied in many different contexts, revealing interesting connections. In the special case where $m=1$, $\alg_1$ provides a counterexample to Happel's question, see \cite{BGSS}. When $m$ is even, $\alg_1$ appears in the presentation by quiver and relations of the Drinfeld double $\mathcal{D}(\Lambda_{h,d})$ of the Hopf algebra $\Lambda_{h,d}$, where  $d \mid h$, $h$ is even and $\Lambda_{h,d}$ is the algebra given by an oriented cycle with $h$ vertices such that all paths of length $d$ are zero, 
see~\cite{EGST}.  The quiver of $\mathcal{D}(\Lambda_{h,d})$ consists of $\frac{h^2}{d}$ isolated vertices and $\frac{h(d-1)}{2}$ copies of the quiver~$\quiv$ with $m=\frac{2h}{d}$ vertices, corresponding to the algebra $\alg_1$. 

For general $m$, the algebra $\alg_1$ also occurs in the study of the representation theory of $U_q(\mathfrak{sl}_2)$; see~\cite{ChK,P, Su}. Moreover, Gadbled, Thiel and Wagner describe three different gradings of $\alg_1$ in~\cite{GTW} -- we comment on these gradings in Section~\ref{brackets HH1} -- and
prove that the homotopy category $\mathcal{C}_m$ of finitely generated trigraded projective modules over $\alg_1$ carries an action of the extended affine braid group of Dynkin type $\widehat{\mathsf{A}}_m$.
The induced action of this braid group on the Grothendieck group of $\mathcal{C}_m$ is a $2$-parameter homological representation of the braid group of Dynkin type $\widehat{\mathsf{A}}_m$.
As mentioned in~\cite{GTW}, the algebra $\alg_1$ is a particular case of the general 
construction of algebras associated to graphs given in~\cite{HK}.

For general $m$ and $N$, the algebra $\alg_N$ occurs in work of Farnsteiner and Skowro\'nski, where they determine the Hopf algebras associated to infinitesimal groups whose principal blocks are tame when $\ground$ is an algebraically closed field with characteristic greater than or equal to~$3$, see~\cite{FS, FS2}.

Finally, Snashall and Taillefer prove in~\cite{ST} that the Hochschild cohomology $\hh^{\bullet}(\alg_N)$ of $\alg_N$ is finitely generated as an algebra over $\ground$. 
Their result allows the study of this graded commutative algebra with geometrical tools. In particular, they prove that the Hochschild cohomology algebra modulo nilpotents is, in this case, a finitely generated $\ground$-algebra and a commutative ring of Krull dimension $2$. \\
	
In this article, we provide a complete description of $\hh^{\bullet}(\alg_N)$ as a Gerstenhaber algebra. We assume that $m\geq 3$ and that the characteristic of $\ground$ does not divide $2$, $N$ or~$m$. The Gerstenhaber structure of an associative algebra was first considered in \cite{G}, and has been widely studied since then. 
However, the Gerstenhaber structure on the Hochschild cohomology of an algebra is very difficult to compute in concrete examples, since it is defined in terms of the bar resolution of the algebra. 
Recently, new computational methods have emerged, see for examples \cite{NW, S, V}. 
In particular, the method proposed by Su\'arez-\'Alvarez in \cite{S} allows us to compute
the Gerstenhaber brackets of two special elements $\varphi$ and $\psi$ in $\hh^{1}(\alg_N)$ with a $\ground$-basis of $\hh^{\bullet}(\alg_N)$.

In particular, we show that the derivations $[\varphi,-]$ and $[\psi,-]$ act diagonally on $\hh^{\bullet}(\alg_N)$ with respect to the basis given in~\cite{ST}. By the Jacobi identity, brackets of elements in the eigenbasis remain eigenvectors, and we can compute the corresponding eigenvalues.
This fact, together with the Poisson identity, allows us to compute the complete Gerstenhaber structure on $\hh^{\bullet}(\alg_N)$.
Moreover, we believe this method could be useful for other families of algebras with similar properties, providing an innovative approach to the computation of Gerstenhaber brackets on Hochschild cohomology.

Our computations provide a complete description of the Lie algebra structure of $\hh^{1}(\alg_N)$ and its Lie-action on $\hh^{\bullet}(\alg_N)$. Indeed, we find that:

\begin{theorem*} Let $N \geq 1$ and $m\geq 3$ be integers. Suppose the characteristic of a field $\ground$ does not divide $2$, $N$ or~$m$. Let $\alg_N$ be the quotient algebra defined over $\ground$ as above. Then the first Hochschild cohomology space $\hh^1(\alg_N)$ is embedded as a Lie algebra into a direct sum of a one-dimensional central Lie algebra $\langle c \rangle$ and $m$ copies of a subquotient $\mathfrak{a}_{N-1}$ of the Virasoro algebra, which share Virasoro degree 0 and commute otherwise, as follows:
\begin{align*}
\hh^1(\alg_N)\ &\hookrightarrow \ \langle c \rangle \oplus (\mathfrak{a}_{N-1})^{\oplus m}.
\end{align*}
\end{theorem*}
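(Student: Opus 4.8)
The plan is to reduce the statement to an explicit comparison of structure constants. First I would recall the $\ground$-basis of $\hh^1(\alg_N)$ determined in \cite{ST}, together with the two distinguished classes $\varphi$ and $\psi$. Since the cohomology was computed via a minimal projective resolution while the Gerstenhaber bracket lives on the bar resolution, the preliminary work is to fix comparison maps between the two resolutions and to express $\varphi$ and $\psi$ as bar cochains; the method of Su\'arez-\'Alvarez \cite{S} is designed precisely to make the resulting bracket computation tractable.

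Next I would compute the derivations $[\varphi,-]$ and $[\psi,-]$ on the chosen basis. The key claim to establish is that both act diagonally, so that every basis vector is a simultaneous eigenvector with an explicit pair of eigenvalues. These eigenvalues equip $\hh^{\bullet}(\alg_N)$ with a grading; restricted to degree one it attaches a ``Virasoro degree'' to each basis element, while the combinatorial origin of the basis vectors sorts them into $m$ families indexed by the vertices of $\quiv$, together with the class of the distinguished central element. The Jacobi identity then forces the bracket of two eigenvectors to again be an eigenvector, whose eigenvalue is the sum of the two; this pins the bracket down up to a scalar, and computing a handful of explicit brackets fixes these scalars and exhibits the Witt-type relation $[L_i,L_j]=(i-j)\,L_{i+j}$ with a central correction proportional to $c$. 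This is what identifies each family with the Virasoro subquotient $\mathfrak{a}_{N-1}$.

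With the structure constants in hand, I would define the map of the statement by sending each basis element of a given family to the corresponding generator of the associated copy of $\mathfrak{a}_{N-1}$, and the distinguished class to $c$. That this is a homomorphism of Lie algebras is then a finite check on brackets of basis elements, using the computation above; injectivity is immediate once one checks that the images of the basis vectors are linearly independent in the target. The assertion that the $m$ copies share Virasoro degree $0$ and otherwise commute is read off from the same data: the relations $(a_i\ol{a}_i)^N-(\ol{a}_{i-1}a_{i-1})^N$ couple adjacent vertices, forcing all families to contribute a single common degree-$0$ class, while the bracket of two nonzero-degree elements taken from different families vanishes.

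I expect the main obstacle to be the bracket computation itself: transporting $\varphi$ and $\psi$ to the bar resolution and running the Su\'arez-\'Alvarez procedure carefully enough to obtain closed formulas for all eigenvalues. Secondary difficulties are normalizing the central term so that it matches the Virasoro cocycle, and checking that the degree-$0$ identification is consistent across all $m$ families. Once the diagonal action and the eigenvalues are secured, the remaining steps are essentially bookkeeping with the Jacobi (and, for the module structure in higher degrees, Poisson) identities.
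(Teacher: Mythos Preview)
Your outline is in the right spirit but contains two misconceptions that would derail the execution.

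First, you plan to ``fix comparison maps between the two resolutions and express $\varphi$ and $\psi$ as bar cochains''. This is exactly what the Su\'arez-\'Alvarez method lets you avoid. The paper's key observation is that $\varphi_{1,0}$ and $\psi_{1,0}$ are the Eulerian derivations attached to two gradings $d,\ol d$ on $\alg_N$ (put $a_i$ in degree $1$ or $\ol a_i$ in degree $-1$). One then checks that the minimal resolution $P_\bullet$ is graded for both, so Remark~\ref{rem:SA}(2) gives the $\delta^e$-liftings for free and the bracket $[\varphi_{1,0},-]$ simply reads off the $d$-degree of a cocycle. No transport to the bar complex is needed; the diagonal action and the eigenvalues drop out immediately.

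Second, and more seriously, you expect the Witt relation to carry ``a central correction proportional to $c$'' and speak of ``normalizing the central term so that it matches the Virasoro cocycle''. There is no such correction here. The subquotient $\mathfrak a_{N-1}=\vir_+/\vir_{>N-1}$ involves only nonnegative indices, so the Virasoro $2$-cocycle never fires; the brackets are the plain Witt relations $[L_s,L_r]=(r-s)L_{s+r}$, truncated at $N-1$. The element $c$ in the target is \emph{not} the Virasoro central charge: it is an independent one-dimensional abelian summand receiving $C=\tfrac12(\varphi_{1,0}+\psi_{1,0})$, which happens to be central in $\hh^1(\alg_N)$. Likewise, the shared degree-$0$ element is $E_0=\tfrac12(\varphi_{1,0}-\psi_{1,0})$, embedded diagonally as $\sum_j L_0(j)$; this is because $\hh^1$ contains a single pair $\varphi_{1,0},\psi_{1,0}$ rather than one per vertex, not because of the relations $(a_i\ol a_i)^N-(\ol a_{i-1}a_{i-1})^N$.

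Finally, the brackets among the $E_{1,j,s}$ are obtained in the paper not via Jacobi but via the Poisson identity applied to the factorisation $E_{1,j,s}=f_j^s\varphi_{1,0}$; this gives $[E_{1,j,s},E_{1,i,r}]=\delta_{i,j}(r-s)E_{1,j,s+r}$ directly. Once these constants are in hand, the embedding $E_{1,j,s}\mapsto L_s(j)$, $E_0\mapsto\sum_j L_0(j)$, $C\mapsto c$ is checked by inspection.
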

See Theorem~\ref{th:vir} for a precise statement. Note that when $N=1$, the Lie algebra $\hh^1(\alg_1)$ is commutative with two generators. It is only when $N>1$ that the Lie algebra becomes more complicated. It would be interesting to relate this Lie algebra structure to the representation theory of $\alg_N$, as has been done for blocks of group algebras in the modular case, see \cite{BKL}. \\

	
	The paper is structured as follows: we recall some basic facts about the Gerstenhaber structure on Hochschild cohomology in~Section~\ref{sec:Hochschild}, and describe Su\'arez-\'Alvarez's method from~\cite{S} in~Section~\ref{sec:approach}. 
	In~Sections~\ref{sec:resolution} and~\ref{cohomology}, we remind the readers of the description of the $\ground$-algebra $\hh^{\bullet}(\alg_N)$ given in~\cite{ST}.
	We compute the brackets of $\varphi$ and $\psi$ in $\hh^1(\alg_N)$ with the elements in a $\ground$-basis of $\hh^{\bullet}(\alg_N)$ in~Section~\ref{brackets HH1}.
	Section~\ref{general brackets} is devoted to the computation of all brackets among $\ground$-algebra generators of~$\hh^{\bullet}(\alg_N)$, providing the complete Gerstenhaber structure of $\hh^{\bullet}(\alg_N)$.
	In Section~\ref{E brackets}, we compute the Lie algebra structure of $\hh^{1}(\alg_N)$ and its Lie-action on $\hh^{n}(\alg_N)$ for all $n\geq 0$. 
	We gather all these results in Section~\ref{sec:lie-structure}, where we describe the Lie algebra $\hh^{1}(\alg_N)$ in terms of the Virasoro algebra, and provide a decomposition of $\hh^{n}(\alg_N)$ into indecomposable modules as a Lie module over~$\hh^{1}(\alg_N)$. \\

\textbf{Acknowledgement:} This project was started at the ``Women In Algebra and Representation Theory" workshop at Banff in March 2016. The authors thank Banff International Research Station, National Science Foundation, and the workshop organizers for this collaboration opportunity. The authors also thank Volodymyr Mazorchuk for helpful remarks.



\section{Hochschild cohomology}  
\label{sec:Hochschild}

Let $\alg$ be any associative algebra over $\ground$ and $\alg^e = \alg \otimes_\ground \alg\opp$ be its enveloping algebra, where $\alg\opp$ is $\alg$ with opposite multiplicative structure. The $n$-fold tensor product $A^{\otimes n}$ is a left $A^e$-module under left and right multiplication, equivalently, it is an $\alg$-$\alg$-bimodule. For $n\ge2$, it is free. There is an exact sequence of free left $\alg^e$-modules, called the \textbf{bar resolution} $\barcx(\alg)=(\alg^{\otimes(n+2)},\bard_n)_{n \geq 0}$, see for example~\cite{M}:
 
$$(\barcx(\alg) \rightarrow A): \qquad \cdots \xrightarrow{\bard_3} \alg^{\otimes 4} \xrightarrow{\bard_2} \alg^{\otimes 3} \xrightarrow{\bard_1} \alg \otimes \alg \xrightarrow{\bard_0} \alg \rightarrow 0,$$
where $\bard_0$ is multiplication, and the differential maps $\bard_n$ are given by:
$$\bard_n(a_0 \otimes a_1 \otimes \cdots \otimes a_{n+1}) = \sum_{i=0}^n (-1)^i a_0 \otimes a_1 \otimes \cdots \otimes a_ia_{i+1} \otimes \cdots \otimes a_{n+1},$$
for all $a_0, \ldots, a_{n+1} \in \alg$. Applying the functor $\hom{\alg^e}(-,\alg)$ to this bar resolution, one obtains a complex $\hom{\alg^e}(\barcx(\alg),\alg)$ with 
differentials $\bard_n^*(f) = f \circ \bard_n$, for any $f \in \hom{\alg^e}(\barcx_{n-1}(\alg),\alg)$, and $\bard_0^*$ is taken to be the zero map. The \textbf{$n$-th Hochschild cohomology} 
of $\alg$ is the $n$-th homology of this new complex 
$$\hh^n(\alg) := \coho_n(\hom{\alg^e}(\barcx(\alg),\alg)) = \ker(\bard_{n+1}^*)/ \im(\bard_n^*),$$
for all $n \geq 0$. It is well-known that the Hochschild cohomology $\hh^{\bullet}(\alg) := \bigoplus_{n \geq 0} \hh^n(\alg)$ is a graded commutative ring via the cup product, that is, $xy = (-1)^{|x||y|} yx$. Here, we denote $|x|=n$ to be the homological degree of element $x \in \hh^n(\alg)$. Moreover, in low degrees, it is well known that:
\begin{itemize}
 \item $\hh^0(\alg) = \ker(\bard_1^*) \cong Z(\alg)$ is the center of the algebra $\alg$; 
 \item $\hh^1(\alg)$ is the space of derivations of $\alg$ modulo its inner derivations;
 \item $\hh^2(\alg)$ is the space of equivalence classes of infinitesimal deformations of $\alg$, which plays an important role in the study of deformation theory of the algebra $\alg$ \cite{G2}.
\end{itemize} 

Besides the associative product structure, $\hh^{\bullet}(\alg)$ also has a bracket operation $[-,-]$, called the Gerstenhaber bracket, of degree $(-1)$. This bracket gives $\hh^{\bullet}(\alg)$ the structure of a graded Lie ring and makes $\hh^{\bullet}(\alg)$ into a Gerstenhaber algebra \cite{G}. The bracket operation is defined at the chain level on the bar resolution as follows. Let $f \in \hom{\alg^e}(\barcx_n(\alg), \alg)$ and $g \in  \hom{\alg^e}(\barcx_q(\alg), \alg)$ represent elements in $\hh^n(\alg)$ and $\hh^q(\alg)$ respectively. Their bracket $[f, g]$ is defined as an element of $\hom{\alg^e}(\barcx_{n+q-1}(\alg), \alg)$ given by
\[ [f, g] = f \circ  g - (-1)^{(n-1)(q-1)} g \circ f, \]
where the circle product $f \circ  g$ is defined by
\begin{align*}
 (f & \circ g) (1 \otimes a_1 \otimes \dots \otimes a_{n+q-1}\otimes 1) =  \\
 &\sum^n_{i=1}  (-1)^{(q-1)(i-1)} f(1 \otimes a_1 \otimes \dots \otimes  a_{i-1}\otimes  g(1 \otimes a_i \otimes \cdots \otimes a_{i+q-1} \otimes 1) \otimes a_{i+q} \otimes \cdots \otimes  a_{n+q-1} \otimes 1),
\end{align*}
and similarly for $g \circ f$. The Gerstenhaber bracket satisfies the following properties:
\begin{enumerate}
 \item Antisymmetry: $[x,y] = -(-1)^{(|x|-1)(|y|-1)} [y,x].$
 \item Poisson identity: $[xy,z] = [x,z]y + (-1)^{|x|(|z|-1)} x[y,z].$
 \item Jacobi identity: $$(-1)^{(|x|-1)(|z|-1)}[x,\, [y,z]] + (-1)^{(|y|-1)(|x|-1)}[y,\, [z,x]] + (-1)^{(|z|-1)(|y|-1)}[z,\, [x,y]] = 0.$$
\end{enumerate} 

Gerstenhaber brackets are in general difficult to compute due to the complexity of the bar resolution. One traditional approach is to construct explicit comparison maps to translate the brackets from the bar resolution to a more computationally friendly resolution. Such comparison maps are, in general, rather complicated to find. Recent progress has been made to define the Gerstenhaber brackets directly on \textit{any} projective resolution, see e.g.~\cite{NW, S, V}. 

We will investigate the Gerstenhaber structure of the Hochschild cohomology $\hh^{\bullet}(\alg)$ for our algebra $\alg:=\alg_N$ defined in~Section~\ref{sec:intro}. In order to compute the Gerstenhaber brackets $[\hh^1(\alg),-]$, we use a technique introduced in \cite{S} by Su\'{a}rez-\'{A}lvarez, which we discuss in the next section.



\section{The Gerstenhaber bracket: an approach by Su\'{a}rez-\'{A}lvarez}
\label{sec:approach}

In this section, we consider a $\ground$-algebra $B$ with a derivation $\delta: B\rar B$. If $W$ is a left $B$-module, a \textbf{$\delta$-operator} on $W$ is a $\ground$-linear map $f:W\rar W$ such that 
$$f(bw)=\delta(b)w+bf(w),$$
for all $b\in B$ and $w\in W$.
If $\epsilon:P_{\bullet}\srar W$ is a projective resolution of $W$,
$$\begin{tikzcd}
	\cdots \arrow{r} & P_2 \arrow{r}{d_2} & P_1 \arrow{r}{d_1} & P_0 \arrow{r}{\epsilon} & W \arrow{r} & 0,
\end{tikzcd}$$
then a \textbf{$\delta$-lifting} of $f$ to $P_{\bullet}$ is a sequence 
$f_{\bullet}=(f_n)_{n\geq 0}$ of $\delta$-operators $f_n:P_n\rar P_n$, such that the following diagram commutes:
$$\begin{tikzcd}
	\cdots \arrow{r} & P_2 \arrow{r}{d_2}\ \arrow{d}{f_2} & P_1 \arrow{r}{d_1} \arrow{d}{f_1} & P_0 \arrow{r}{\epsilon} \arrow{d}{f_0}  & W \arrow{r} \arrow{d}{f} & 0\\
	\cdots \arrow{r} & P_2 \arrow{r}{d_2} & P_1 \arrow{r}{d_1} & P_0 \arrow{r}{\epsilon} & W \arrow{r} & 0.
\end{tikzcd}$$
In~\cite{S}, Su\'{a}rez-\'{A}lvarez proves that $\delta$-liftings exist and are unique up to an equivalence:

\begin{lemma}\cite[Lemma 1.4]{S}
	Let $W$ be a left $B$-module and $\epsilon:P_{\bullet}\srar W$ be a projective resolution of $W$. If $f$ is a $\delta$-operator on~$W$, there exists a $\delta$-lifting $f_{\bullet}:P_{\bullet}\rar P_{\bullet}$ of $f$ to $P_{\bullet}$. Moreover, if $f_{\bullet}$ and $f'_{\bullet}$ are both $\delta$-liftings of $f$ to $P_{\bullet}$, then $f_{\bullet}$ and $f'_{\bullet}$ are $B$-linearly homotopic.
\end{lemma}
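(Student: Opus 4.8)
The plan is to reduce every statement about $\delta$-operators, which are not $B$-linear, to genuinely $B$-linear statements, so that projectivity of the $P_n$ and the classical comparison theorem can be applied. The observation driving the whole argument is that the difference of two $\delta$-operators on the same module is $B$-linear: if $f$ and $f'$ are $\delta$-operators on $W$, then $(f-f')(bw) = \delta(b)w + bf(w) - \delta(b)w - bf'(w) = b(f-f')(w)$. Symmetrically, adding a $B$-linear map to a $\delta$-operator, or composing a $\delta$-operator with $B$-linear maps on either side, again produces a $\delta$-operator. I would record these bookkeeping facts first, since they are used repeatedly.

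Next I would establish two auxiliary existence statements. First, every free module $F=B^{(I)}$ carries a $\delta$-operator, namely $\sum_i b_i e_i \mapsto \sum_i \delta(b_i) e_i$, which is a $\delta$-operator by the Leibniz rule for $\delta$; since every projective module $P$ is a summand of a free module $F$, pre-composing such an operator with the inclusion $P\irar F$ and post-composing with the projection $F\srar P$ (both $B$-linear) yields a $\delta$-operator on $P$. Second, and this is the key technical lemma, given any $B$-linear surjection $\pi: P\srar W$ with $P$ projective and any $\delta$-operator $\partial_W$ on $W$, there is a $\delta$-operator $\partial_P$ on $P$ with $\pi\partial_P = \partial_W\pi$. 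To prove it, I pick any $\delta$-operator $\partial_P^0$ on $P$ and note that $g := \pi\partial_P^0 - \partial_W\pi$ is $B$-linear by the first paragraph; projectivity of $P$ and surjectivity of $\pi$ let me lift $g$ to a $B$-linear $\tilde g: P\rar P$ with $\pi\tilde g = g$, and then $\partial_P := \partial_P^0 - \tilde g$ is still a $\delta$-operator and satisfies $\pi\partial_P = \partial_W\pi$.

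With this lemma in hand, existence of the $\delta$-lifting follows by induction along the resolution. For $n=0$ I apply it to $\epsilon: P_0\srar W$ and $f$. For the inductive step I first check that $f_n$ restricts to a $\delta$-operator on $\ker d_n = \im d_{n+1}$: if $x\in\ker d_n$ then $d_n f_n(x) = f_{n-1} d_n(x) = 0$, so $f_n(\ker d_n)\subseteq \ker d_n$. Applying the key lemma to the surjection $d_{n+1}: P_{n+1}\srar \im d_{n+1}$ and the restricted $\delta$-operator $f_n|_{\im d_{n+1}}$ produces a $\delta$-operator $f_{n+1}$ with $d_{n+1}f_{n+1} = f_n d_{n+1}$, closing the commuting square.

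Finally, for uniqueness up to homotopy I set $u_n := f_n - f'_n$. By the difference observation each $u_n$ is $B$-linear; the commuting squares give $d_n u_n = u_{n-1} d_n$, so $u_\bullet$ is a $B$-linear chain map; and $\epsilon u_0 = f\epsilon - f\epsilon = 0$ shows it lifts the zero map on $W$. The classical comparison theorem then yields $B$-linear maps $s_n: P_n\rar P_{n+1}$ with $u_n = d_{n+1}s_n + s_{n-1}d_n$, which is exactly a $B$-linear homotopy between $f_\bullet$ and $f'_\bullet$. The main conceptual obstacle throughout is precisely the non-$B$-linearity of $\delta$-operators, which blocks any direct appeal to projectivity; the entire proof is organized around systematically replacing $\delta$-operators by their ($B$-linear) differences so that the standard projective-lifting and null-homotopy machinery becomes available.
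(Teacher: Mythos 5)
Your proof is correct and follows essentially the same route as the proof of Lemma 1.4 in Su\'arez-\'Alvarez's paper \cite{S} (which this paper cites rather than reproves): reduce everything to $B$-linear differences of $\delta$-operators, use projectivity to correct an arbitrary $\delta$-operator on each $P_n$ so the squares commute, and obtain the homotopy by applying the classical comparison theorem to the $B$-linear difference of two liftings. One small caution: your blanket bookkeeping claim that composing a $\delta$-operator with $B$-linear maps \emph{on either side} again yields a $\delta$-operator is false as stated (e.g.\ one-sided composition with the zero map); what you actually use---and what is true---is that $\pi D \iota$ is a $\delta$-operator whenever $\pi$, $\iota$ are $B$-linear with $\pi\iota=\id$, which is exactly the situation in your free-summand construction, so the argument itself is unaffected.
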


Now, suppose $f:W\rar W$ is a $\delta$-operator with $\delta$-lifting
$f_{\bullet}:P_{\bullet}\rar P_{\bullet}$ of $f$ to $P_{\bullet}$. 
Given $n\geq 0$ and $\phi \in \hom{B}(P_n,W)$, there is a $B$-linear map $f_n^{\#}(\phi):P_n\rar W$ defined by setting 
$$f_n^{\#}(\phi)(p)=f(\phi(p))- \phi(f_n(p)),$$ 
for $p\in P_n$.
The resulting morphism 
$$f_n^{\#}:\hom{B}(P_n,W)\rar \hom{B}(P_n,W)$$
is an endomorphism of the complex $\hom{B}(P_{\bullet},W)$. In fact, the induced map on cohomology
$$\Delta_{f,P_{\bullet}}^{\bullet}: \coho(\hom{B}(P_{\bullet},W))\rar\coho(\hom{B}(P_{\bullet},W))$$
only depends on $f$ and not on the choice of the lifting $f_{\bullet}$:

\begin{thm}\label{thm:Delta}\cite[Theorem A]{S}
	Let $W$ be a left $B$-module and $f$ be a $\delta$-operator on~$W$. There is a canonical morphism
	$$\Delta_f^{\bullet}:\ext{B}^{\bullet}(W,W)\rar \ext{B}^{\bullet}(W,W)$$
	of graded vector spaces,
	such that for every projective resolution $\epsilon:P_{\bullet}\srar W$ and each $\delta$-lifting $f_{\bullet}:P_{\bullet}\rar P_{\bullet}$ of $f$ to $P_{\bullet}$, the following diagram commutes:
	$$\begin{tikzcd}
		\coho(\hom{B}(P_{\bullet},W))\arrow{r}{\Delta_{f,P_{\bullet}}^{\bullet}} \arrow{d}{\cong} &\coho(\hom{B}(P_{\bullet},W))\arrow{d}{\cong}\\
		\ext{B}^{\bullet}(W,W)\arrow{r}{\Delta_f^{\bullet}} &\ext{B}^{\bullet}(W,W).
	\end{tikzcd}$$
\end{thm}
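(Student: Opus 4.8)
The plan is to construct $\Delta_f^{\bullet}$ from a single projective resolution together with one $\delta$-lifting, take the induced endomorphism $\Delta_{f,P_{\bullet}}^{\bullet}$ of cohomology, and then prove that this construction is insensitive to both choices; the resolution-independence will simultaneously yield the stated naturality square. First I would check that $f_{\bullet}^{\#}=(f_n^{\#})_n$ really is an endomorphism of the cochain complex $\hom{B}(P_{\bullet},W)$. Writing the cochain differential as $\phi\mapsto\phi\circ d_{n+1}$, a direct computation shows that evaluating $\partial^n\circ f_n^{\#}$ and $f_{n+1}^{\#}\circ\partial^n$ on a cochain leaves only the two terms $\phi\circ(f_n\circ d_{n+1})$ and $\phi\circ(d_{n+1}\circ f_{n+1})$ to compare, while the postcomposition part $f\circ\phi$ cancels. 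Hence $f_{\bullet}^{\#}$ commutes with the differential precisely because the $\delta$-lifting satisfies $d_{n+1}\circ f_{n+1}=f_n\circ d_{n+1}$, and we obtain the induced map $\Delta_{f,P_{\bullet}}^{\bullet}$ on $\coho(\hom{B}(P_{\bullet},W))$.

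Next I would establish independence of the $\delta$-lifting. By the preceding lifting lemma, any two $\delta$-liftings $f_{\bullet}$ and $f_{\bullet}'$ are $B$-linearly homotopic, say $f_n-f_n'=d_{n+1}s_n+s_{n-1}d_n$ with $B$-linear $s_n$. Since the postcomposition term of $f^{\#}$ does not depend on the lifting, the difference $f_n^{\#}-f_n'^{\#}$ is just precomposition with $-(f_n-f_n')$; substituting the homotopy exhibits $f_{\bullet}^{\#}-f_{\bullet}'^{\#}$ as a null-homotopy $\partial H+H\partial$ with $H(\phi)=-\phi\circ s_{\bullet-1}$. Thus the two chain endomorphisms induce the same map on cohomology, and $\Delta_{f,P_{\bullet}}^{\bullet}$ depends only on $f$ and the resolution. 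This is the first place where the distinction between $\delta$-operators and $B$-linear maps is used crucially: the only lifting-dependent piece of $f^{\#}$ is its $B$-linear precomposition part.

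The heart of the argument is independence of the resolution. Given resolutions $P_{\bullet},P_{\bullet}'$ with chosen $\delta$-liftings $f_{\bullet},f_{\bullet}'$, I would pick a $B$-linear comparison map $u_{\bullet}:P_{\bullet}\rar P_{\bullet}'$ lifting $\id_W$, whose induced map $(u^{*})_{*}$ is the canonical isomorphism identifying both cohomologies with $\ext{B}^{\bullet}(W,W)$. The key observation is that although $f_{\bullet}$ and $f_{\bullet}'$ are merely $\delta$-operators, the difference $u_{\bullet}f_{\bullet}-f_{\bullet}'u_{\bullet}$ is a genuine $B$-linear chain map $P_{\bullet}\rar P_{\bullet}'$ — a difference of two $\delta$-operators for the same $\delta$ is $B$-linear — and it lifts the zero map on $W$. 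By the comparison theorem it is $B$-linearly null-homotopic, $u_nf_n-f_n'u_n=d'_{n+1}t_n+t_{n-1}d_n$. Feeding this homotopy into the computation of $f_{\bullet}^{\#}\circ u^{*}-u^{*}\circ f_{\bullet}'^{\#}$ exhibits it as $\partial S+S\partial$ with $S(\psi)=-\psi\circ t_{\bullet-1}$, so the two sides agree on cohomology. This is exactly the commutativity of the asserted square, and it forces the maps $\Delta_{f,P_{\bullet}}^{\bullet}$ for all resolutions to glue into a single canonical $\Delta_f^{\bullet}$ on $\ext{B}^{\bullet}(W,W)$.

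I expect the main obstacle to be precisely this resolution-independence step, and more specifically the idea of comparing $u_{\bullet}f_{\bullet}$ with $f_{\bullet}'u_{\bullet}$ rather than attempting to transport a $\delta$-lifting directly along $u_{\bullet}$: the naive approach fails because $\delta$-liftings are not $B$-linear, so the ordinary comparison theorem does not apply to them individually. Recognizing that their \emph{difference} is $B$-linear and lifts $0$ on $W$ is what allows the standard fundamental lemma of homological algebra to take over. The three homotopy identities above are then routine but sign-sensitive bookkeeping, and I would carry them out with the unsigned cochain differential $\phi\mapsto\phi\circ d_{n+1}$ used in the excerpt so that the homotopies come out with the signs indicated.
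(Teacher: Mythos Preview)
The paper does not give a proof of this theorem; it is simply quoted from~\cite{S} as \cite[Theorem~A]{S} and used as a black box. So there is no ``paper's own proof'' to compare against, and your task reduces to checking that your argument actually establishes the result.

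Your argument is correct and is in fact the standard one (and is how Su\'arez-\'Alvarez proves it). The three steps---chain-map property of $f_{\bullet}^{\#}$, homotopy-independence of the lifting via Lemma~1.4, and resolution-independence via a comparison map---are exactly what is needed, and the computations you sketch go through. In particular, the crucial observation you single out, that $u_{\bullet}f_{\bullet}-f_{\bullet}'u_{\bullet}$ is $B$-linear because the $\delta$-twists cancel and hence falls under the ordinary comparison lemma, is precisely the point of the whole construction. Your homotopy formulas $H(\phi)=-\phi\circ s_{\bullet-1}$ and $S(\psi)=-\psi\circ t_{\bullet-1}$ are correct with the unsigned cochain differential you are using.
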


In what follows, let us consider an algebra $\alg$ and take $B=\alg^e$ and $W=A$ as a left $\alg^e$-module. In particular, $\alg$ can be taken to be the special biserial algebra $\alg_N$ defined in~Section~\ref{sec:intro}. 
It is well known that any element $f\in \hh^1(\alg)$ can be represented by a derivation $f:\alg \rar \alg$. 
We can use the above results to compute the Gerstenhaber bracket $[f, -]$ on $\hh^{\bullet}(\alg)$. 
The first step is to construct a derivation on $\alg^e$,
$$f^{e}:=f\T\id_\alg+\id_\alg\T f:\alg^e\longrightarrow \alg^e,$$ and note that $f:\alg\rar \alg$ is an $f^{e}$-operator on the $\alg^e$-module~$\alg$. 
We can thus consider the $f^{e}$-lifting of $f$ to the bar resolution $\barcx(\alg)$ of~$\alg$ and get a morphism of complexes 
$$f_{\bullet}^{\#}:\hom{\alg^e}(\barcx(\alg),\alg )\rar \hom{\alg^e}(\barcx(\alg),\alg ).$$

\begin{lemma}\cite[\S2.2]{S}.\label{lem:SA}
The morphism $f_{\bullet}^{\#}$ describes the action of the bracket $[f, -]$ on the complex $\hom{\alg^e}(\barcx(\alg),\alg)$.
In particular, the Gerstenhaber bracket $[f, -]$ on $\hh^{\bullet}(\alg)$ is given by 
$$\Delta_{f,\barcx}^{\bullet}:\hh^{\bullet}(\alg)\longrightarrow\hh^{\bullet}(\alg).$$
\end{lemma}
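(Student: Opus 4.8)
The plan is to make every ingredient in Su\'{a}rez-\'{A}lvarez's construction explicit for $B=\alg^e$, $W=\alg$, and $\delta=f^e$, and then to match the resulting chain-level operator $f_\bullet^{\#}$ term-by-term with the Gerstenhaber bracket $[f,-]$ computed from the circle-product formulas of Section~\ref{sec:Hochschild}. First I would fix a cochain representing $f$: since $\hh^1(\alg)$ consists of derivations modulo inner ones, I identify the class $f$ with the $\alg^e$-linear map $F\in\hom{\alg^e}(\barcx_1(\alg),\alg)$ determined by $F(1\T a\T 1)=f(a)$, and I record that every derivation kills the unit, $f(1)=0$. A direct check, using that $a\T a'$ acts on $\alg$ by $w\mapsto awa'$ together with the Leibniz rule, confirms that $f\colon\alg\rar\alg$ is an $f^e$-operator, as already noted above.

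The key step is to produce an explicit $f^e$-lifting of $f$ to the bar resolution. I would take the total derivation
$$f_n(a_0\T a_1\T\cdots\T a_{n+1})=\sum_{j=0}^{n+1}a_0\T\cdots\T f(a_j)\T\cdots\T a_{n+1},$$
and verify two things: that each $f_n$ is an $f^e$-operator on $\barcx_n(\alg)=\alg^{\T(n+2)}$, and that $f_\bullet=(f_n)_{n\geq0}$ commutes with the bar differentials and lifts $f$ along $\bard_0$. Both are routine applications of the Leibniz rule $f(a_ia_{i+1})=f(a_i)a_{i+1}+a_if(a_{i+1})$: the contraction of adjacent tensor factors in $\bard_n$ is compensated exactly by the two matching summands of $f_n$, giving $\bard_n f_n=f_{n-1}\bard_n$. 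By \cite[Lemma 1.4]{S} any $f^e$-lifting induces the same map on cohomology, so I am free to use this convenient one.

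Next I would compute $f_q^{\#}(\phi)$ for an arbitrary cochain $\phi\in\hom{\alg^e}(\barcx_q(\alg),\alg)$ and compare it with $[F,\phi]$. Because $F$ has homological degree $1$, the sign $(-1)^{(1-1)(q-1)}=1$, so $[F,\phi]=F\circ\phi-\phi\circ F$, and the two circle products collapse to
\begin{align*}
(F\circ\phi)(1\T a_1\T\cdots\T a_q\T 1)&=f\bigl(\phi(1\T a_1\T\cdots\T a_q\T 1)\bigr),\\
(\phi\circ F)(1\T a_1\T\cdots\T a_q\T 1)&=\sum_{i=1}^{q}\phi(1\T a_1\T\cdots\T f(a_i)\T\cdots\T a_q\T 1).
\end{align*}
On the other hand, evaluating $f_q^{\#}(\phi)(p)=f(\phi(p))-\phi(f_q(p))$ on $p=1\T a_1\T\cdots\T a_q\T 1$ and using $f(1)=0$ to discard the $j=0$ and $j=q+1$ summands of $f_q(p)$ reproduces precisely these two expressions. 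Since both $f_q^{\#}(\phi)$ and $[F,\phi]$ are $\alg^e$-linear and agree on the generators $1\T a_1\T\cdots\T a_q\T 1$ of the free $\alg^e$-module $\barcx_q(\alg)$, they are equal; this proves the first assertion. The ``in particular'' then follows by passing to cohomology: by Theorem~\ref{thm:Delta} the endomorphism of $\hh^{\bullet}(\alg)=\coho(\hom{\alg^e}(\barcx(\alg),\alg))$ induced by $f_\bullet^{\#}$ is exactly $\Delta_{f,\barcx}^{\bullet}$, and the chain-level identity identifies it with $[f,-]$.

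I expect the only genuine obstacle to be the sign and index bookkeeping in the two comparisons. The point worth isolating is that the degree-$1$ hypothesis is precisely what trivializes the signs in the circle products and lets the single sum $\sum_{i=1}^q$ of $\phi\circ F$ match the interior terms $j=1,\dots,q$ of $\phi(f_q(p))$, the boundary terms $j=0,q+1$ disappearing through $f(1)=0$. Once the explicit lifting $f_\bullet$ is in hand, everything else is formal.
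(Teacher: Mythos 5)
Your proof is correct: the explicit lifting $f_n=\sum_j \id^{\otimes j}\otimes f\otimes \id^{\otimes(n+1-j)}$ is indeed an $f^e$-lifting, and your degree-one sign analysis matching $f_q^{\#}(\phi)$ with $F\circ\phi-\phi\circ F$ on the generators $1\otimes a_1\otimes\cdots\otimes a_q\otimes 1$ is exactly right. The paper itself offers no proof (it cites \cite[\S2.2]{S}), and your argument reconstructs precisely the computation given there, so this is essentially the same approach as the source the paper relies on.
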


\begin{rem}\label{rem:SA} In later sections, we will make use of the following two general observations:
\begin{enumerate}
  \item By~Theorem~\ref{thm:Delta}, we can use any projective $A^{e}$-resolution $P_{\bullet}$ of $\alg$ to compute the bracket $[f,-]$, provided that we are able construct an $f^e$-lifting of $f$ to~$P_{\bullet}$. 
  \item Let $A$ be a $\ZZ$-graded $\ground$-algebra with $\ZZ$-grading $\mathsf{deg}$, and denote the induced grading on $A^e$ also by $\mathsf{deg}$. Write $\delta_{\mathsf{deg}}$ for the Eulerian derivation on $A$ defined by setting $\delta_{\mathsf{deg}}(a)=\mathsf{deg}(a)a$ for any homogeneous element $a\in A$.
Suppose $P_\bullet \srar A$ is a graded projective resolution of $A$ by $A^e$-modules; that is, every $P_n$ is a graded $A^e$-module and the differential maps preserve the grading. Then there is a $(\delta_{\mathsf{deg}})^e$-lifting $(\delta_{\mathsf{deg}})_{\bullet}$ of $\delta_{\mathsf{deg}}$, defined by setting 
$$ (\delta_{\mathsf{deg}})_n(p)\ =\ \mathsf{deg}_n(p)p$$
for homogeneous elements $p\in P_n$, where we denote the grading on $P_n$ by $\mathsf{deg}_n$.
\end{enumerate}
\end{rem}



\section{A minimal projective bimodule resolution for $\alg$}
\label{sec:resolution}

\begin{notation}
Throughout the paper, we fix integers $N\geq 1$ and $m\geq 3$. We consider a field $\ground$ and assume the characteristic of $\ground$ does not divide $2$, $N$ or~$m$. 
We let $$A:=\alg_N=\ground\quiv/ \brac {a_i a_{i+1},\ \ol{a}_{i+1}\ol{a}_i,\ (a_i\ol{a}_i)^N - (\ol{a}_{i-1}a_{i-1})^N \mid i \in\ZZ/m\ZZ},$$ 
where $\quiv$ is the quiver described in the~Introduction (Figure~\ref{fig:quiver}).  
Note that we identify the labeling set of the vertices of $\quiv$ with $\ZZ/m\ZZ$. 
\end{notation}

In \cite{ST}, Snashall and Taillefer construct a minimal projective $A^{e}$-resolution $(P_{\bullet},d_{\bullet})$ for $\alg$, and use it to compute the Hochschild cohomology $\hh^{\bullet}(\alg)$. In what follows, we describe~$P_{\bullet}$. 

For every $n\geq 0$, $i\in\ZZ/m\ZZ$ and $0\leq r\leq n$, we define
an element $\g{r,i}{n}\in \ground\quiv$. We set $\g{0,i}{0}=e_i$, the trivial path at vertex $i$, and for $n\geq 1$ we define
$$\g{r,i}{n}=
\begin{cases}
\g{r,i}{n-1}a  + (-1)^n \g{r-1,i}{n-1}\ol{a}(a\ol{a})^{N-1} 
& \mbox{ if } n-2r>0, \\
\g{r,i}{n-1}a(\ol{a}a)^{N-1}  + (-1)^n \g{r-1,i}{n-1}\ol{a} 
& \mbox{ if } n-2r<0, \\
\g{r,i}{n-1}a(\ol{a}a)^{N-1}  + \g{r-1,i}{n-1}\ol{a}(a\ol{a})^{N-1} 
& \mbox{ if } n=2r,
\end{cases}$$
where $\g{-1,i}{n}=\g{n,i}{n-1}=0$ by convention. In the above formulas, the indices of the $a$ and $\ol{a}$ arrows are chosen uniquely such that $g^n_{r,i}$ is nonzero in $\ground\quiv$. We also write $\bas{n}:=\{\g{r,i}{n}\mid i\in\ZZ/m\ZZ,\ 0\leq r\leq n\}$. In particular, $\ground \bas{0}=E$, the subalgebra of $\ground\quiv$ generated by all the trivial paths $e_i$'s, and
$$\begin{array}{lcl} 
\bas{1} & = & \{a_i,\ -\ol{a}_i\mid i\in\ZZ/m\ZZ \}, \\ 
\bas{2} & = & \{a_i a_{i+1},\ -\ol{a}_{i+1}\ol{a}_{i},\ (a_i\ol{a}_i)^N - (\ol{a}_{i-1}a_{i-1})^N\mid i\in\ZZ/m\ZZ \} .  \end{array}$$
Now, $\ground \bas{n}$ is an $E^e$-module for every $n\geq 0$, and we can consider the $A^{e}$-module
$$P_n:=\alg\otimes_E \ground \bas{n}\otimes_E \alg.$$ 
As usual, the augmentation map $\epsilon:P_0=\alg\otimes_E \alg\rar \alg$ is induced by multiplication on~$\alg$. 
For $n\geq 1$, the differential
$d_n:P_n\rar P_{n-1}$ is the $\alg^e$-linear map that sends $1\T\g{r,i}{n}\T 1$ to
$$\begin{cases}
1 \T \g{r,i}{n-1}\T a  
+ (-1)^{n+r} a \T \g{r,i+1}{n-1}\T 1 & \\
\quad + (-1)^{n+r} \ol{a}(a\ol{a})^{N-1}\T \g{r-1,i-1}{n-1}\T 1 
+ (-1)^{n} 1 \T \g{r-1,i}{n-1}\T \ol{a}(a\ol{a})^{N-1} 
& \mbox{if } n-2r>0, \\
\ \\
1 \T \g{r,i}{n-1}\T a(\ol{a}a)^{N-1}  
+ (-1)^{n+r} a(\ol{a}a)^{N-1} \T \g{r,i+1}{n-1}\T 1 & \\
\quad + (-1)^{n+r} \ol{a}\T \g{r-1,i-1}{n-1}\T 1
+ (-1)^{n} 1 \T \g{r-1,i}{n-1}\T \ol{a} 
& \mbox{if } n-2r<0,\\
\ \\
\sum_{k=0}^{N-1} \; \Big(
(\ol{a}a)^{k} \T\g{r,i}{n-1}\T a (\ol{a}a)^{N-1-k}
+(-1)^{\frac{n}{2}}(a\ol{a})^{k}a\T\g{r,i+1}{n-1}\T (a\ol{a})^{N-1-k} & \\
 
\quad + 
(-1)^{\frac{n}{2}}(\ol{a}a)^{k}\ol{a}\T\g{r-1,i-1}{n-1}\T (\ol{a}a)^{N-1-k}  
+(a\ol{a})^{k}\T \g{r-1,i}{n-1}\T \ol{a}(a\ol{a})^{N-1-k} \Big)
& \mbox{if } n=2r.  \end{cases}$$

In particular, the differential $d_1:P_1\rar P_0=A\T_E A$ is defined by sending
$$\left\lbrace \begin{array}{lll}
	1\T\g{0,i}{1}\T 1=1\T a_i\T 1 & \mbox{to} & 
	1\T a_i-a_i\T 1,\\
	1\T\g{1,i}{1}\T 1=-1\T \ol{a}_{i-1}\T 1 & \mbox{to} & 
	-1\T \ol{a}_{i-1}+\ol{a}_{i-1}\T 1.
	\end{array}\right. $$

\begin{thm}\cite[Theorem 1.6]{ST}
	Let $N\geq 1$. The complex $(P_{\bullet},d_{\bullet})$ is a minimal projective resolution  for $\alg$ as an $\alg^e$-module. 
\end{thm}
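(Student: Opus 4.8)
The plan is to establish the four defining features of a minimal projective bimodule resolution in turn: that each $P_n$ is a projective $\alg^{e}$-module, that $(P_{\bullet},d_{\bullet})$ augmented by $\epsilon$ is a chain complex over $\alg$, that it is minimal, and that it is exact. Projectivity is immediate and I would dispatch it first. Since $E=\ground\bas{0}$ is spanned by the orthogonal vertex idempotents $e_i$ and each $\g{r,i}{n}$ is a path with a fixed source and a fixed target vertex, the $E$-$E$-bimodule $\ground\bas{n}$ splits as a direct sum of one-dimensional pieces $e_s\,\ground\bas{n}\,e_t$. Consequently
$$P_n=\alg\T_E\ground\bas{n}\T_E\alg\ \cong\ \bigoplus_{\g{r,i}{n}\in\bas{n}}\alg e_s\T_\ground e_t\alg,$$
and each summand $\alg e_s\T_\ground e_t\alg=\alg^{e}(e_s\T e_t)$ is a direct summand of $\alg^{e}$, hence projective.

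Next I would verify that $(P_{\bullet},d_{\bullet})$ is a complex and that it is minimal. One checks $\epsilon d_1=0$ and $d_{n-1}d_n=0$ by direct substitution into the explicit formulas, proceeding by induction on $n$ and separating the three cases $n-2r>0$, $n-2r<0$, and $n=2r$. The recursion defining $\g{r,i}{n}$ is engineered precisely so that the cross terms produced by composing two consecutive differentials cancel, the cancellations being forced by the defining relations $a_ia_{i+1}=0$, $\ol{a}_{i+1}\ol{a}_i=0$, and the centrality relation $(a_i\ol{a}_i)^N=(\ol{a}_{i-1}a_{i-1})^N$. Minimality is then read off by inspection: in every case the coefficients appearing in $d_n(1\T\g{r,i}{n}\T1)$ are nonconstant paths (single arrows, or products such as $\ol{a}(a\ol{a})^{N-1}$ and $a(\ol{a}a)^{N-1}$), so they lie in $\mathrm{rad}(\alg)$; therefore $d_n(P_n)\subseteq\mathrm{rad}(\alg^{e})\,P_{n-1}$, which is exactly minimality.

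The crux, and the step I expect to be the main obstacle, is exactness. My plan is to construct an explicit contracting homotopy for the augmented complex as a complex of \emph{right} $\alg$-modules (taken $E$-linear on the left), namely right $\alg$-linear maps $s_{-1}\colon\alg\rar P_0$ and $s_n\colon P_{n-1}\rar P_n$ satisfying $\epsilon s_{-1}=\id_\alg$ and $d_{n+1}s_{n+1}+s_nd_n=\id_{P_n}$ for all $n\geq0$. The existence of such a null-homotopy of the identity forces the augmented complex to be acyclic, which, together with the projectivity already established, yields the theorem. Because each $P_n$ is induced from $E$ and $\alg$ carries an explicit monomial-type $\ground$-basis, the maps $s_n$ can be defined on basis elements by prepending the appropriate arrow and reindexing the $\g{r,i}{n}$; verifying the homotopy identity then reduces, once again, to the same overlap combinatorics that governs $d^2=0$, so the real difficulty is bookkeeping rather than conceptual.

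To make this bookkeeping tractable I would exploit the internal grading. The resolution is graded by path length, and the length of $\g{r,i}{n}$ grows with $n$, so in any fixed internal degree only finitely many $P_n$ contribute. This truncates the homotopy identity to a finite system in each degree, makes the inductive verification of $d^2=0$ and of the homotopy relations manageable degree by degree, and provides an independent Euler-characteristic consistency check against $\dim_\ground\alg$ in each internal degree. Should the direct homotopy prove too unwieldy, the fallback is to identify $(P_{\bullet},d_{\bullet})$ with the resolution produced by the general overlap (comultiplicative) formalism for algebras presented by quivers with relations, or to compare it against the bar resolution via explicit comparison maps; but the contracting homotopy is the most self-contained route and is what I would pursue first.
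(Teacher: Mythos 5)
First, a point of comparison: the paper you were given never proves this statement --- it is imported wholesale from Snashall--Taillefer (the theorem is stated with the citation \cite{ST}, Theorem 1.6, and no argument is reproduced), so your proposal has to be judged as a reconstruction of that external proof rather than against anything internal to this paper. The parts you actually carry out are correct and standard: since each $\g{r,i}{n}$ is a linear combination of parallel paths, it satisfies $\g{r,i}{n}=e_s\,\g{r,i}{n}\,e_t$ for a unique pair of vertices, so $P_n\cong\bigoplus_{\g{r,i}{n}\in\bas{n}}\alg e_s\T_\ground e_t\alg$ is projective; $d_{n-1}d_n=0$ is a finite case-by-case check against the three branches of the recursion; and minimality is exactly the observation that every term of $d_n(1\T\g{r,i}{n}\T 1)$ carries a path of positive length on at least one side, whence $\im(d_n)\subseteq\mathrm{rad}(\alg^e)P_{n-1}$.

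The genuine gap is exactness, which you yourself flag as the crux and then dispose of with the claim that the homotopy maps $s_n$ ``can be defined on basis elements by prepending the appropriate arrow and reindexing,'' the verification being ``bookkeeping.'' That formula is too naive already in degree zero: a right-$\alg$-linear $s_0$ satisfying $d_1s_0+s_{-1}\epsilon=\id_{P_0}$ must send $u\T 1$, for $u$ a path of length $k$, to a signed sum over all $k$ positions of $u$ (the telescoping identity needs every intermediate splitting), not to a single prepended term. In higher degrees the $s_n$ must be defined on a chosen normal-form $\ground$-basis of $\alg$, and the relation $(a_i\ol{a}_i)^N=(\ol{a}_{i-1}a_{i-1})^N$ --- which is not monomial --- forces case distinctions wherever a basis element admits a rewriting across that relation; this is precisely where ``the same overlap combinatorics that governs $d^2=0$'' stops being routine. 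Explicit contracting homotopies of this kind are classical for monomial algebras (Bardzell, Sk\"oldberg), but for $\alg_N$ their construction is real work that your proposal does not perform, and the proof in \cite{ST} does not take this route either: exactness there is obtained by comparison with the one-sided minimal resolutions of the simple modules in the Green--Snashall--Solberg framework, not by a bimodule homotopy. Your fallback options are likewise only named, not executed, so as it stands the proposal establishes projectivity, the complex property, and minimality, but not the resolution property itself --- which is the actual content of the theorem.
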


\begin{rem}\label{rem:N1}
We observe that when $N=1$, every $\g{r,i}{n} \in \ground\quiv$ is a $\ground$-linear combination of all the paths $p$ of length $n$ that start at the vertex $i$, such that $p$ contains exactly $r$ arrows of the form $\ol a$ (see Figure~2).
In this case, the differential $d_n:P_n\rar P_{n-1}$ maps $1\T\g{r,i}{n}\T 1$ to
$$1\T \g{r,i}{n-1}\T a_{i+n-2r-1}
+(-1)^{n+r} a_i\T \g{r,i+1}{n-1}\T 1
+(-1)^{n+r} \ol{a}_{i-1}\T \g{r-1,i-1}{n-1}\T 1
+(-1)^n 1\T \g{r-1,i}{n-1}\T \ol{a}_{i+n-2r}.$$
\end{rem}

\begin{rem}
To better illustrate the notation $\g{r,i}{n}$ in the resolution $P_{\bullet}$ given above, we provide the terms appearing in $\g{1,2}{4}$ when $N=1$ and when $N=2$ in Figures~2 and 3, respectively.
\end{rem}

\begin{figure}[H]\label{fig:paths}
\begin{center}
\begin{tikzpicture}[scale=1]
\begin{scope}[xshift=-6.2cm, scale=0.55]
\path[use as bounding box] (-3.5,-3.5) rectangle (3.5,2.5);

\foreach \x in {6,7,0,1,2,3,4,5}
\fill ({90-\x*45}:2.5cm) circle (1.2mm);

\foreach \x in {5,6,7,0,1,2,3,4}
\draw[thick, ->] ({85-\x*45}:2.65cm) arc ({85-\x*45}:{50-\x*45}:2.65cm);

\foreach \x in {5,6,7,0,1,2,3,4}
\draw[thick, <-] ({85-\x*45}:2.35cm) arc ({85-\x*45}:{52-\x*45}:2.35cm);

\foreach \x in {5,6,7,0,1,2,3,4}
\draw ({90-\x*45}:2cm) node {$\x$};

\draw[thick, red] ({85-2*45}:3cm) arc ({85-2*45}:{50-4*45}:3cm);
\draw[thick, red, <-] ({85-4*45}:3.3cm) arc ({85-4*45}:{50-4*45}:3.3cm);
\draw[thick, red] ({50-4*45}:3.3cm) arc ({410-4*45}:{230-4*45}:0.15cm);

\draw[red] (270:4.4cm) node {$a_2 a_3 a_4 \ol{a}_4$};

\end{scope}

\begin{scope}[xshift=-2.0cm, scale=0.55]
\path[use as bounding box] (-3.5,-3.8) rectangle (3.5,2.5);

\foreach \x in {6,7,0,1,2,3,4,5}
\fill ({90-\x*45}:2.5cm) circle (1.2mm);

\foreach \x in {5,6,7,0,1,2,3,4}
\draw[thick, ->] ({85-\x*45}:2.65cm) arc ({85-\x*45}:{50-\x*45}:2.65cm);

\foreach \x in {5,6,7,0,1,2,3,4}
\draw[thick, <-] ({85-\x*45}:2.35cm) arc ({85-\x*45}:{52-\x*45}:2.35cm);

\foreach \x in {5,6,7,0,1,2,3,4}
\draw ({90-\x*45}:2cm) node {$\x$};

\draw[thick, red] ({85-2*45}:3cm) arc ({85-2*45}:{50-3*45}:3cm);
\draw[thick, red] ({85-3*45}:3.3cm) arc ({85-3*45}:{50-3*45}:3.3cm);
\draw[thick, red, ->] ({85-3*45}:3.6cm) arc ({85-3*45}:{50-3*45}:3.6cm);

\draw[thick, red] ({50-3*45}:3.3cm) arc ({410-3*45}:{230-3*45}:0.15cm);
\draw[thick, red] ({85-3*45}:3.6cm) arc ({85-3*45}:{265-3*45}:0.15cm);

\draw[red] (270:4.4cm) node {$a_2 a_3 \ol{a}_3 a_3 $};

\end{scope}
\begin{scope}[xshift=2.0cm, scale=0.55]
\path[use as bounding box] (-3.5,-3.5) rectangle (3.5,2.5);

\foreach \x in {6,7,0,1,2,3,4,5}
\fill ({90-\x*45}:2.5cm) circle (1.2mm);

\foreach \x in {5,6,7,0,1,2,3,4}
\draw[thick, ->] ({85-\x*45}:2.65cm) arc ({85-\x*45}:{50-\x*45}:2.65cm);

\foreach \x in {5,6,7,0,1,2,3,4}
\draw[thick, <-] ({85-\x*45}:2.35cm) arc ({85-\x*45}:{52-\x*45}:2.35cm);

\foreach \x in {5,6,7,0,1,2,3,4}
\draw ({90-\x*45}:2cm) node {$\x$};

\draw[thick, red] ({85-2*45}:3cm) arc ({85-2*45}:{50-2*45}:3cm);
\draw[thick, red] ({85-2*45}:3.3cm) arc ({85-2*45}:{50-2*45}:3.3cm);
\draw[thick, red, ->] ({85-2*45}:3.6cm) arc ({85-2*45}:{50-3*45}:3.6cm);

\draw[thick, red] ({50-2*45}:3.3cm) arc ({410-2*45}:{230-2*45}:0.15cm);
\draw[thick, red] ({85-2*45}:3.6cm) arc ({85-2*45}:{265-2*45}:0.15cm);

\draw[red] (270:4.4cm) node {$a_2 \ol{a}_2 a_2 a_3 $};

\end{scope}
\begin{scope}[xshift=6.2cm, scale=0.55]
\path[use as bounding box] (-3.5,-3.5) rectangle (3.5,2.5);

\foreach \x in {6,7,0,1,2,3,4,5}
\fill ({90-\x*45}:2.5cm) circle (1.2mm);

\foreach \x in {5,6,7,0,1,2,3,4}
\draw[thick, ->] ({85-\x*45}:2.65cm) arc ({85-\x*45}:{50-\x*45}:2.65cm);

\foreach \x in {5,6,7,0,1,2,3,4}
\draw[thick, <-] ({85-\x*45}:2.35cm) arc ({85-\x*45}:{52-\x*45}:2.35cm);

\foreach \x in {5,6,7,0,1,2,3,4}
\draw ({90-\x*45}:2cm) node {$\x$};

\draw[thick, red] ({85-1*45}:3cm) arc ({85-1*45}:{50-1*45}:3cm);
\draw[thick, red, ->] ({85-1*45}:3.3cm) arc ({85-1*45}:{50-3*45}:3.3cm);

\draw[thick, red] ({85-1*45}:3.3cm) arc ({85-1*45}:{265-1*45}:0.15cm);

\draw[red] (270:4.4cm) node {$\ol{a}_1 a_1 a_2 a_3 $};

\end{scope}

\end{tikzpicture}
\end{center} \vskip 7pt
\caption{The paths that appear in $\g{1,2}{4}= a_2 a_3 a_4 \ol{a}_4 - a_2 a_3 \ol{a}_3 a_3 + a_2\ol{a}_2 a_2 a_3 - \ol{a}_1 a_1 a_2 a_3\in \ground\quiv$ when $N=1$, $m=8$.}
\end{figure}

\begin{figure}[H]\label{fig:pathsN}
\begin{center}
\begin{tikzpicture}[scale=1]
\begin{scope}[xshift=-6.2cm, scale=0.55]
\path[use as bounding box] (-3.5,-3.5) rectangle (3.5,2.5);

\foreach \x in {6,7,0,1,2,3,4,5}
\fill ({90-\x*45}:2.5cm) circle (1.2mm);

\foreach \x in {5,6,7,0,1,2,3,4}
\draw[thick, ->] ({85-\x*45}:2.65cm) arc ({85-\x*45}:{50-\x*45}:2.65cm);

\foreach \x in {5,6,7,0,1,2,3,4}
\draw[thick, <-] ({85-\x*45}:2.35cm) arc ({85-\x*45}:{52-\x*45}:2.35cm);

\foreach \x in {5,6,7,0,1,2,3,4}
\draw ({90-\x*45}:2cm) node {$\x$};

\draw[thick, red] ({85-2*45}:3cm) arc ({85-2*45}:{50-4*45}:3cm);
\draw[thick, red] ({85-4*45}:3.3cm) arc ({85-4*45}:{50-4*45}:3.3cm);

\draw[thick, red, dotted] ({85-4*45}:3.6cm) arc ({85-4*45}:{50-4*45}:3.6cm);
\draw[thick, red, dotted, <-] ({85-4*45}:3.9cm) arc ({85-4*45}:{50-4*45}:3.9cm);

\draw[thick, red] ({50-4*45}:3.3cm) arc ({410-4*45}:{230-4*45}:0.15cm);
\draw[thick, red, dotted] ({50-4*45}:3.9cm) arc ({410-4*45}:{230-4*45}:0.15cm);
\draw[thick, red, dotted] ({85-4*45}:3.6cm) arc ({85-4*45}:{265-4*45}:0.15cm);

\draw[red] (270:4.9cm) node {$a_2 a_3 a_4 \ol{a}_4 (a_4 \ol{a}_4)$};

\end{scope}

\begin{scope}[xshift=-2.0cm, scale=0.55]
\path[use as bounding box] (-3.5,-3.8) rectangle (3.5,2.5);

\foreach \x in {6,7,0,1,2,3,4,5}
\fill ({90-\x*45}:2.5cm) circle (1.2mm);

\foreach \x in {5,6,7,0,1,2,3,4}
\draw[thick, ->] ({85-\x*45}:2.65cm) arc ({85-\x*45}:{50-\x*45}:2.65cm);

\foreach \x in {5,6,7,0,1,2,3,4}
\draw[thick, <-] ({85-\x*45}:2.35cm) arc ({85-\x*45}:{52-\x*45}:2.35cm);

\foreach \x in {5,6,7,0,1,2,3,4}
\draw ({90-\x*45}:2cm) node {$\x$};

\draw[thick, red] ({85-2*45}:3cm) arc ({85-2*45}:{50-3*45}:3cm);
\draw[thick, red] ({85-3*45}:3.3cm) arc ({85-3*45}:{50-3*45}:3.3cm);
\draw[thick, red, dotted] ({85-3*45}:3.6cm) arc ({85-3*45}:{50-3*45}:3.6cm);
\draw[thick, red, dotted] ({85-3*45}:3.9cm) arc ({85-3*45}:{50-3*45}:3.9cm);

\draw[thick, red, ->] ({85-3*45}:4.2cm) arc ({85-3*45}:{50-3*45}:4.2cm);

\draw[thick, red] ({50-3*45}:3.3cm) arc ({410-3*45}:{230-3*45}:0.15cm);
\draw[thick, red] ({85-3*45}:3.6cm) arc ({85-3*45}:{265-3*45}:0.15cm);
\draw[thick, red, dotted] ({50-3*45}:3.9cm) arc ({410-3*45}:{230-3*45}:0.15cm);
\draw[thick, red, dotted] ({85-3*45}:4.2cm) arc ({85-3*45}:{265-3*45}:0.15cm);

\draw[red] (270:4.9cm) node {$a_2 a_3 \ol{a}_3 (a_3 \ol{a}_3)a_3 $};

\end{scope}
\begin{scope}[xshift=2.0cm, scale=0.55]
\path[use as bounding box] (-3.5,-3.5) rectangle (3.5,2.5);

\foreach \x in {6,7,0,1,2,3,4,5}
\fill ({90-\x*45}:2.5cm) circle (1.2mm);

\foreach \x in {5,6,7,0,1,2,3,4}
\draw[thick, ->] ({85-\x*45}:2.65cm) arc ({85-\x*45}:{50-\x*45}:2.65cm);

\foreach \x in {5,6,7,0,1,2,3,4}
\draw[thick, <-] ({85-\x*45}:2.35cm) arc ({85-\x*45}:{52-\x*45}:2.35cm);

\foreach \x in {5,6,7,0,1,2,3,4}
\draw ({90-\x*45}:2cm) node {$\x$};

\draw[thick, red] ({85-2*45}:3cm) arc ({85-2*45}:{50-2*45}:3cm);
\draw[thick, red] ({85-2*45}:3.3cm) arc ({85-2*45}:{50-2*45}:3.3cm);
\draw[thick, red, dotted] ({85-2*45}:3.6cm) arc ({85-2*45}:{50-2*45}:3.6cm);
\draw[thick, red, dotted] ({85-2*45}:3.9cm) arc ({85-2*45}:{50-2*45}:3.9cm);

\draw[thick, red, ->] ({85-2*45}:4.2cm) arc ({85-2*45}:{50-3*45}:4.2cm);

\draw[thick, red] ({50-2*45}:3.3cm) arc ({410-2*45}:{230-2*45}:0.15cm);
\draw[thick, red] ({85-2*45}:3.6cm) arc ({85-2*45}:{265-2*45}:0.15cm);
\draw[thick, red, dotted] ({50-2*45}:3.9cm) arc ({410-2*45}:{230-2*45}:0.15cm);
\draw[thick, red, dotted] ({85-2*45}:4.2cm) arc ({85-2*45}:{265-2*45}:0.15cm);

\draw[red] (270:4.9cm) node {$a_2 \ol{a}_2 (a_2\ol{a}_2) a_2 a_3 $};

\end{scope}
\begin{scope}[xshift=6.2cm, scale=0.55]
\path[use as bounding box] (-3.5,-3.5) rectangle (3.5,2.5);
\foreach \x in {6,7,0,1,2,3,4,5}
\fill ({90-\x*45}:2.5cm) circle (1.2mm);

\foreach \x in {5,6,7,0,1,2,3,4}
\draw[thick, ->] ({85-\x*45}:2.65cm) arc ({85-\x*45}:{50-\x*45}:2.65cm);

\foreach \x in {5,6,7,0,1,2,3,4}
\draw[thick, <-] ({85-\x*45}:2.35cm) arc ({85-\x*45}:{52-\x*45}:2.35cm);

\foreach \x in {5,6,7,0,1,2,3,4}
\draw ({90-\x*45}:2cm) node {$\x$};

\draw[thick, red] ({85-1*45}:3cm) arc ({85-1*45}:{50-1*45}:3cm);
\draw[thick, red] ({85-1*45}:3.3cm) arc ({85-1*45}:{50-1*45}:3.3cm);
\draw[thick, red, dotted] ({85-1*45}:3.6cm) arc ({85-1*45}:{50-1*45}:3.6cm);
\draw[thick, red, dotted] ({85-1*45}:3.9cm) arc ({85-1*45}:{50-1*45}:3.9cm);

\draw[thick, red, ->] ({50-1*45}:3.9cm) arc ({50-1*45}:{50-3*45}:3.9cm);

\draw[thick, red] ({85-1*45}:3.3cm) arc ({85-1*45}:{265-1*45}:0.15cm);
\draw[thick, red, dotted] ({50-1*45}:3.6cm) arc ({410-1*45}:{230-1*45}:0.15cm);
\draw[thick, red, dotted] ({85-1*45}:3.9cm) arc ({85-1*45}:{265-1*45}:0.15cm);

\draw[red] (270:4.9cm) node {$\ol{a}_1 a_1 (\ol{a}_1 a_1) a_2 a_3 $};

\end{scope}

\end{tikzpicture}
\end{center} \vskip 15pt

\caption{The paths that appear in $\g{1,2}{4}= a_2 a_3 a_4 \ol{a}_4 (a_4 \ol{a}_4)^{N-1} - a_2 a_3 \ol{a}_3 (a_3 \ol{a}_3)^{N-1} a_3 + a_2\ol{a}_2 (a_2\ol{a}_2)^{N-1} a_2 a_3 - \ol{a}_1 a_1 (\ol{a}_1 a_1)^{N-1} a_2 a_3\in \ground\quiv$ when $N=2$, $m=8$. All paths start at vertex $2$ and end at vertex $4$.}
\end{figure}



\section{The Hochschild cohomology ring for $\alg$} 
\label{cohomology}

\numberwithin{equation}{subsubsection}

In this section, we describe the Hochschild cohomology $\hh^{\bullet}(\alg)$ computed by Snashall and Taillefer. All results in this section appear in~\cite{ST}. For every $n\geq 0$, the $\ground$-module $\hh^{n}(\alg)$ is finite dimensional, and we describe its basis in terms of cocycles in $\hom{\alg^e}(P_n,\alg)$.  
As usual, we identify $\hom{\alg^e}(P_n,\alg)$ with $\hom{E^e}(\ground \bas{n},\alg)$, whose elements are the $\ground$-linear functions $\ground\bas{n}\rar\alg$ that map every $g\in \bas{n}$ to a linear combination of paths in $\alg$ parallel to~$g$, that is, paths that share the same source and target with $g$. For $g\in \bas{n}$ and $u\in A$ parallel to $g$, we write $(g\parallel u)\in  \hom{E^e}(\ground \bas{n},\alg)$ to denote the $\ground$-linear function that sends $g$ to $u$ and which maps all the other elements in $\bas{n}$ to zero.

\subsection{The center $\hh^{0}(\alg)$ of $\alg$.}\label{ss:0}

Writing $\varepsilon_i:=(a_i\ol{a}_i)^N\in \alg$ and $f_i:=(a_i\ol{a}_i + \ol{a}_i a_i)\in \alg$, the set 
$$\{1,\, \varepsilon_i,\, f_i^s \,\mid \, i\in\ZZ/m\ZZ, \, 1\leq s\leq N-1\}$$
is a basis for the $\ground$-module $\hh^{0}(\alg)$ under the usual identification of $\hh^0(\alg)$ with the center of $A$, see~\cite[Theorem 3.1]{ST}.

\subsection{The $\ground$-module $\hh^{n}(\alg)$ for $n\geq 1$ \cite[Propositions 4.1 and 5.1]{ST}} \label{ss:n}

For $m \geq 3$ and each $n \geq 1$, we write $n=pm +t$ for some integers $p,\,t$ with $p\geq 0$ and $0\leq t\leq m-1$. 

\subsubsection{When $m$ is even and $n$ is even,
the $\ground$-module $\hh^{n}(\alg)$ has basis:}\label{subsec:m even n even}

\begin{align*} 
\chi_{n,\alpha} &:= \sum_{i=0}^{m-1} \left( g^n_{\frac{n-\alpha m}{2},i}  \parallel (-1)^{\frac{n-\alpha m}{2}i}\,e_i \right) && \text{ for } -p \leq \alpha \leq p; \\
\pi_{n,\alpha}&:= \left( g^n_{\frac{n-\alpha m}{2},0}  \parallel (a_0\ol{a}_0)^N \right) && \text{ for } -p \leq \alpha \leq p; \\
F_{n,j,s}&:= \left( g^n_{\frac{n}{2},j}  \parallel (a_j\ol{a}_j)^s \right) + \left( g^n_{\frac{n}{2},j+1}  \parallel (-1)^{\frac{n}{2}}\,(\ol{a}_j a_j)^s \right) && \text{ for } j \in\ZZ/m\ZZ \text{ and } 1\leq s\leq N-1. 
\end{align*}

\subsubsection{When $m$ is even and $n$ is odd,
	the $\ground$-module $\hh^{n}(\alg)$ has basis:}\label{subsec:m even n odd}
\begin{align*} 
\varphi_{n,\gamma}&:=\sum_{i=0}^{m-1} \left( g^n_{\frac{n-\gamma m-1}{2},i}  \parallel (-1)^{\frac{n-\gamma m-1}{2}i} \, a_i (\ol{a}_i a_i)^{N-1} \right) && \text{ for } -p \leq \gamma < 0, \\
&&& \text{ and for }  \gamma =-(p+1) \text{ in case } t=m-1; \\
\varphi_{n,\gamma}&:=\sum_{i=0}^{m-1} \left( g^n_{\frac{n-\gamma m-1}{2},i}  \parallel (-1)^{\frac{n-\gamma m-1}{2}i} \, a_i \right) && \text{ for } 0 \leq \gamma \leq p; \\
\psi_{n,\beta}&:=\sum_{i=0}^{m-1} \left( g^n_{\frac{n-\beta m+1}{2},i}  \parallel (-1)^{\frac{n-\beta m-1}{2}i}  \,  \ol{a}_{i-1} (a_{i-1} \ol{a}_{i-1})^{N-1} \right) && \text{ for } 0 < \beta \leq p, \\
&&& \text{ and for } \beta =p+1 \text{ in case }  t=m-1; \\
\psi_{n,\beta}&:=\sum_{i=0}^{m-1} \left( g^n_{\frac{n-\beta m+1}{2},i}  \parallel (-1)^{\frac{n-\beta m-1}{2}i}  \,  \ol{a}_{i-1} \right) && \text{ for } -p \leq \beta \leq 0; \\
E_{n,j,s}&:= \left( g^n_{\frac{n-1}{2},j}  \parallel a_j (\ol{a}_j a_j)^s \right) && \text{ for } j \in\ZZ/m\ZZ \text{ and } 1\leq s\leq N-1.
\end{align*}

\subsubsection{When $m$ is odd and $n$ is even, the $\ground$-module $\hh^{n}(\alg)$ has basis:}
\label{subsec:m odd n even}
\begin{align*} 
\chi_{n,\delta} 
&:= \sum_{i=0}^{m-1} \left( g^n_{\frac{n-\delta m}{2},i}  \parallel e_i \right)
&& \text{ for } \delta
				=\begin{cases} p-2\alpha-1, 
				&\text{if $t$ is odd, $0 \leq \alpha < p$ and $\alpha + \frac{m-t}{2}$ is odd}, \\
				p-2\alpha, 
				&\text{if $t$ is even, $ 0 \leq \alpha \leq p$ and $\alpha + \frac{t}{2}$ is even}; \end{cases} \\
\pi_{n,\delta}
&:= \left( g^n_{\frac{n-\delta m}{2},0}  \parallel (a_0\ol{a}_0)^N \right)
&& \text{ for } \delta
				=\begin{cases} p-2\alpha-1, 
				& \text{if $t$ is odd, $0 \leq \alpha < p$ and $\alpha + \frac{m-t}{2}$ is even}, \\
                p-2\alpha, 
                & \text{if $t$ is even, $0 \leq \alpha \leq p$  and $\alpha + \frac{t}{2}$ is odd}; \end{cases} 
\end{align*}
\begin{align*}
F_{n,j,s}
&:= \left( g^n_{\frac{n}{2},j}  \parallel (a_j\ol{a}_j)^s \right) + \left( g^n_{\frac{n}{2},j+1}  \parallel (-1)^{\frac{n}{2}}\,(\ol{a}_j a_j)^s \right) 
&& \text{ for } j \in\ZZ/m\ZZ \text{ and } 1\leq s\leq N-1; \\
\varphi_{n,\sigma}
&:=\sum_{i=0}^{m-1} \left( g^n_{\frac{n-\sigma m-1}{2},i}  \parallel (a_i \ol{a}_i)^{N-1}a_i \right)
&& \text{ for }  t=m-1 \text{ and } \sigma=-(p+1); \\
\psi_{n,\tau}
&:=\sum_{i=0}^{m-1} \left( g^n_{\frac{n-\tau m+1}{2},i}  \parallel (\ol{a}_{i-1}a_{i-1})^{N-1}\ol{a}_{i-1} \right)
&& \text{ for } t=m-1 \text{ and } \tau=p+1.
\end{align*}

\begin{rem}\label{rem:indexchi}
We note that the value $\delta=0$ appears in the above index set for $\chi_{n,\delta}$ if and only if $n\equiv 0 \text{ (mod 4)}$. This follows by considering the possible values of $p,\ m$ and $t$ mod~$4$ for $\delta=0$.
\end{rem}

\subsubsection{When $m$ is odd and $n$ is odd, the $\ground$-module $\hh^{n}(\alg)$ has basis:} \label{subsec:m odd n odd}

\begin{align*} 
\varphi_{n,\sigma}
&:=\sum_{i=0}^{m-1} \left( g^n_{\frac{n-\sigma m-1}{2},i}  \parallel (a_i \ol{a}_i)^{N-1}a_i \right)&&\\
&\text{ for } \sigma 
				= \begin{cases} p-2\gamma
				& \text{if $t$ is odd, $\gamma \leq p<2 \gamma$ and $\gamma + \frac{t-1}{2}$ is even}, \\
				p-2\gamma-1
				& \text{if $t$ is even, $\gamma <p \leq 2\gamma$ and $\gamma + \frac{m+t-1}{2}$ is even, $t \neq m-1$, } \\
				p-2\gamma-1
				& \text{if $t=m-1$, $\gamma \leq p\leq 2\gamma$ and $\gamma$ is even}; \end{cases} &&\\
\varphi_{n,\sigma}
&:=\sum_{i=0}^{m-1} \left( g^n_{\frac{n-\sigma m-1}{2},i}  \parallel a_i \right)&&\\
&\text{ for } \sigma 
				= \begin{cases} p-2\gamma 
				& \text{if $t$ is odd, $0\leq 2\gamma \leq p$ and $\gamma + \frac{t-1}{2}$ is even}, \\
				p-2\gamma-1 
				& \text{if $t$ is even, $0\leq 2\gamma < p$ and $\gamma + \frac{m+t-1}{2}$ is even}; \end{cases} &&\\    
\psi_{n,\tau}
&:=\sum_{i=0}^{m-1} \left( g^n_{\frac{n-\tau m+1}{2},i}  \parallel (\ol{a}_{i-1}a_{i-1})^{N-1}\ol{a}_{i-1} \right)&&\\
&\text{ for } \tau 
				= \begin{cases} p-2\beta 
				& \text{ if $t$ is odd, $0\leq 2\beta <p$ and $\beta + \frac{t-1}{2}$ is even}, \\
				p-2\beta-1 
				& \text{ if $t$ is even, $0\leq 2\beta <p-1$ and $\beta + \frac{m+t-1}{2}$ is even, $t \neq m-1$,}\\
				p-2\beta-1 
				& \text{ if $t=m-1$, $-2\leq 2\beta < p-1$ and $\beta$ is even}; \end{cases} &&\\
\psi_{n,\tau}
&:=\sum_{i=0}^{m-1} \left( g^n_{\frac{n-\tau m+1}{2},i}  \parallel \ol{a}_{i-1} \right)&&\\
&\text{ for } \tau 
				= \begin{cases} p-2\beta 
				& \text{ if $t$ is odd, $\beta\leq p\leq 2\beta$ and $\beta + \frac{t-1}{2}$ is even,} \\
				p-2\beta-1 & 
				\text{ if $t$ is even, $\beta \leq p-1\leq 2\beta$ and $\beta + \frac{m+t-1}{2}$ is even}; \end{cases}&&\\                                                      
E_{n,j,s}
&:= \left( g^n_{\frac{n-1}{2},j}  \parallel a_j (\ol{a}_j a_j)^s \right)
 \text{ for } j \in\ZZ/m\ZZ \text{ and } 1\leq s\leq N-1;\\
\pi_{n,\delta}
&:= \left( g^n_{\frac{n-\delta m}{2},0}  \parallel (a_0\ol{a}_0)^N \right)
\text{ for } t=0 \text{ and } \delta = \pm p.
\end{align*}

\begin{rem}\label{rem:indexphi}
	We note that the value $\sigma=0$ appears in the above index set for $\varphi_{n,\sigma}$ if and only if $n\equiv 1  \text{ (mod 4)}$. Again, this follows by considering the possible values of $p,\ m$ and $t$ mod~$4$ for $\sigma=0$.
	Similarly, the value $\tau=0$ appears in the above index set for $\psi_{n,\tau}$ if and only if $n\equiv 1  \text{ (mod 4)}$.
\end{rem}

\subsection{For $m \geq 3$ even, the $\ground$-algebra $\hh^{\bullet}(\alg)$ has generators~\cite[Theorems 4.4 and 4.8]{ST}:} \label{m even ring}
\begin{align*}
& 1, \, \varepsilon_i,  && \text{ in degree } 0, \text{ for } N=1, \,i \in\ZZ/m\ZZ,\\
& 1, \, \varepsilon_i, \, f_i,  && \text{ in degree } 0, \text{ for } N>1, \,i \in\ZZ/m\ZZ, \\
& \varphi_{1,0}, \, \psi_{1,0} && \text{ in degree } 1,\\
& \chi_{2,0} && \text{ in degree } 2,\\
& \varphi_{m-1,-1}, \, \psi_{m-1,1} && \text{ in degree } m-1,\\
& \chi_{m,1}, \, \chi_{m,-1} && \text{ in degree } m.
\end{align*} 

We will compute the Gerstenhaber brackets for each of these generators in the next sections, so we describe them explicitly here:
\begin{align*}
\varepsilon_i&=(a_i\ol{a}_i)^N, & 
f_i=&a_i\ol{a}_i + \ol{a}_i a_i,\\
\varphi_{1,0} &= \sum_{i=0}^{m-1} \left(g^1_{0,i}  \parallel a_i\right), &
\psi_{1,0} =&  \sum_{i=0}^{m-1} \left(g^1_{1,i}  \parallel \ol{a}_{i-1}\right), \\
\chi_{2,0} &=  \sum_{i=0}^{m-1} \left(g^2_{1,i}  \parallel (-1)^ie_i\right), && \\
\varphi_{m-1,-1} &= \sum_{i=0}^{m-1} \left(g^{m-1}_{m-1,i}  \parallel (-1)^{i} \, a_i (\ol{a}_{i}a_{i})^{N-1}\right), &
\psi_{m-1,1} =& \sum_{i=0}^{m-1} \left(g^{m-1}_{0,i}  \parallel (-1)^i \, \ol{a}_{i-1} (a_{i-1} \ol{a}_{i-1})^{N-1}\right), \\
\chi_{m,1} &=  \sum_{i=0}^{m-1} \left(g^m_{0,i}  \parallel e_i\right), &
\chi_{m,-1} =&  \sum_{i=0}^{m-1} \left(g^m_{m,i}  \parallel e_i\right).
\end{align*}

\subsection{For $m \geq 3$ odd, the $\ground$-algebra $\hh^{\bullet}(\alg)$ has generators~\cite[Theorems 5.2 and 5.4]{ST}:} \label{m odd ring}

\begin{align*}
& 1, \, \varepsilon_i,  && \text{ in degree } 0, \text{ for } N=1, \,i \in\ZZ/m\ZZ,\\
& 1, \, \varepsilon_i, \, f_i,  && \text{ in degree } 0, \text{ for } N>1, \,i \in\ZZ/m\ZZ, \\
& \varphi_{1,0}, \, \psi_{1,0} && \text{ in degree } 1,\\
& F_{2,j,1} && \text{ in degree } 2, \text{ for } N>1, \,j \in\ZZ/m\ZZ,\\
& \chi_{4,0} && \text{ in degree } 4,\\
& \varphi_{m-1,-1}, \, \psi_{m-1,1} && \text{ in degree } m-1,\\
& \chi_{2m,2}, \, \chi_{2m,-2} && \text{ in degree } 2m.
\end{align*}

Explicitly, these generators are:
\begin{align*}
\varepsilon_i&=(a_i\ol{a}_i)^N, & 
f_i=&a_i\ol{a}_i + \ol{a}_i a_i,\\
\varphi_{1,0} &= \sum_{i=0}^{m-1} \left(g^1_{0,i}  \parallel a_i\right), &
\psi_{1,0} =& \sum_{i=0}^{m-1} \left(g^1_{1,i}  \parallel \ol{a}_{i-1}\right), \\
F_{2,j,1}&= \left( g^2_{1,j}  \parallel a_j\ol{a}_j \right) + \left( g^2_{1,j+1}  \parallel - \ol{a}_j a_j \right), &&\\
\chi_{4,0} &= \sum_{i=0}^{m-1} \left(g^4_{2,i}  \parallel e_i\right), && \\
\varphi_{m-1,-1} &= \sum_{i=0}^{m-1} \left(g^{m-1}_{m-1,i}  \parallel (a_i \ol{a}_i)^{N-1} a_i\right), & 
\psi_{m-1,1} =& \sum_{i=0}^{m-1} \left(g^{m-1}_{0,i}  \parallel ( \ol{a}_{i-1} a_{i-1})^{N-1}  \ol{a}_{i-1}\right), \\
\chi_{2m,2} &= \sum_{i=0}^{m-1} \left(g^{2m}_{0,i}  \parallel e_i\right), &
\chi_{2m,-2} =& \sum_{i=0}^{m-1} \left(g^{2m}_{2m,i}  \parallel e_i\right).
\end{align*}

We list several algebra relations on the generators for $m$ even and odd in  Method~\ref{methodc}. Some of the relations given there differ from the results in \cite[Theorem 4.8]{ST}.



\section{Gerstenhaber brackets with $\varphi_{1,0}$ and $\psi_{1,0}$}
\label{brackets HH1}

\numberwithin{equation}{section}

In this section, we compute the brackets with basis elements $\varphi_{1,0}$ and $\psi_{1,0}$ of $\hh^{1}(\alg)$, using Su\'{a}rez-\'{A}lvarez's approach as described in Section~\ref{sec:approach}. 
More precisely, we show that the basis for $\hh^n(\alg)$ described in~Section~\ref{cohomology} is an eigenbasis for the endomorphisms $[\varphi_{1,0},-]$ and $[\psi_{1,0},-]$.
We proceed as follows.

First we show that $\varphi_{1,0}$, $\psi_{1,0}$ correspond to Eulerian derivations on $\alg$ coming from gradings $d$, $\ol{d}$ on $\alg$, respectively. Then we check that the projective resolution $P_\bullet\srar \alg$ given in~Section~\ref{sec:resolution} is graded with respect to both gradings. 
By Remark~ \ref{rem:SA}, we can then describe the brackets $[\varphi_{1,0},-]$ and $[\psi_{1,0},-]$ on $\hh^{n}(\alg)$ for all $n\ge 0$ in terms of these gradings. 

Furthermore, we will show that the basis for $\hh^n(\alg)$ given in~Section~\ref{cohomology} consists of homogeneous elements with respect to both gradings. In this case $[\varphi_{1,0},-]$ and $[\psi_{1,0},-]$ act diagonally with eigenvalues given by the degree of the homogeneous basis elements.

Recall that
$$
\varphi_{1,0} = \sum_{i=0}^{m-1}(\g{0,i}{1} \parallel a_i)=\sum_{i=0}^{m-1}(a_i \parallel a_i)\quad\mbox{and} \quad
\psi_{1,0} = \sum_{i=0}^{m-1}(\g{1,i}{1} \parallel \ol{a}_{i-1})=\sum_{i=0}^{m-1}(-\ol{a}_{i} \parallel \ol{a}_{i}).
$$

Consider the grading $d$ on $\ground\quiv$ such that the arrows $a_i$ are in degree $1$ and the arrows $\ol{a}_i$ are in degree $0$.  Similarly, consider the grading $\ol{d}$ on $\ground\quiv$ such that the arrows $a_i$ are in degree $0$ and the arrows $\ol{a}_i$ are in degree $-1$. We will write $d(p)$ and $\ol{d}(p)$ for the degrees of the path $p\in\ground \quiv$ under the gradings $d$ and $\ol{d}$, respectively.
Moreover, since $\alg$ is defined as a quotient of $\ground \quiv$ by an ideal which is homogeneous under both gradings, we get two corresponding gradings $d$ and $\ol{d}$ on $\alg$. The algebra $\alg^e$ inherits these gradings as well, by setting $d(a\otimes a\p)=d(a)+d(a\p)$ and $\ol{d}(a\otimes a\p)=\ol{d}(a)+\ol{d}(a\p)$ for homogeneous elements $a,a\p\in\alg$.

Now, the elements $\varphi_{1,0}$ and $\psi_{1,0}$ in $\hh^1(\alg)$ correspond to the derivations $\delta_{d}, \delta_{\ol{d}}:\alg\rar\alg$ sending a path $p$ in $\alg$ to $d(p)p$ and $\ol{d}(p)p$, respectively. 
In other words, $\delta_{d}$ is the Eulerian derivation associated to the grading $d$ on $\alg$, while $\delta_{\ol{d}}$ is the Eulerian derivation associated to the grading $\ol{d}$ on $\alg$. 

Observe furthermore that:
\begin{lemma}\label{lem:deg}
The elements $g_{r,i}^n$ in $\ground\quiv$ are homogeneus with respect to the gradings $d$ and $\ol{d}$, with degree given by
\[ d( \g{r,i}{n} ) =
\begin{cases}
rN + n-2r  & \mbox{if } n-2r \geq 0\\
(n-r) N 
& \mbox{if } n-2r<0
\end{cases}\quad \text{and} \quad \ol{d} ( \g{r,i}{n} ) =
\begin{cases}
-rN  & \mbox{if } n-2r \geq 0\\
-(n-r) N +n-2r 
& \mbox{if } n-2r<0.
\end{cases}\]
Furthermore, the projective $\alg^e$-module $P_n= \alg\otimes_E \langle g_{r,i}^n\ |\ i\in\ZZ/m\ZZ,0\le r\le n\rangle\otimes_E \alg$ inherits both gradings (denoted again by $d$, $\ol{d}$) by setting
\begin{align*}
d(a\otimes g_{r,i}^n \otimes a\p)   &=d(a)+d(g_{r,i}^n)+ d(a\p),\\
\ol{d}(a\otimes g_{r,i}^n \otimes a\p)&=\ol{d}(a)+\ol{d}(g_{r,i}^n)+\ol{d}(a\p),
\end{align*}
for homogeneous elements $a,a\p\in \alg$. Then the minimal projective resolution $P_\bullet\srar \alg$ from Section \ref{sec:resolution} is a graded projective resolution with respect to these two gradings.
\end{lemma}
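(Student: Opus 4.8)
The plan is to establish the two assertions in turn: first the explicit degree formulas showing each $\g{r,i}{n}$ is homogeneous, and then the fact that $P_\bullet\srar\alg$ is graded. Both reduce to direct computations organized by the trichotomy $n-2r>0$, $n-2r<0$, $n=2r$ that governs the recursion defining $\g{r,i}{n}$ and the differential $d_n$. The degree formulas I would prove by induction on $n$, and the graded-resolution claim then follows by feeding these formulas into each $d_n$.

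For the degree formulas, the base case $n=0$ is immediate: $\g{0,i}{0}=e_i$ is a trivial path, so both formulas return $0$. For the inductive step I would substitute the recursive definition of $\g{r,i}{n}$ into each grading. In the case $n-2r>0$, the summands are $\g{r,i}{n-1}a$ and $\g{r-1,i}{n-1}\ol{a}(a\ol{a})^{N-1}$; one checks that the first falls under the $n-2r\ge 0$ branch of the inductive hypothesis (since $(n-1)-2r\ge 0$) while the second falls under the $(n-1)-2(r-1)>0$ branch, and that after adding $d(a)=1$ and $d(\ol{a}(a\ol{a})^{N-1})=N-1$ both summands acquire the common degree $rN+n-2r$. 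The cases $n-2r<0$ and $n=2r$ are handled identically. The computation for $\ol{d}$ is entirely analogous; alternatively one can shortcut it using the identity $d(\g{r,i}{n})+\ol{d}(\g{r,i}{n})=n-2r$, which reflects that $d$ counts the $a$-arrows, $-\ol{d}$ counts the $\ol{a}$-arrows, and each $\g{r,i}{n}$ carries a fixed excess $n-2r$ of $a$-arrows over $\ol{a}$-arrows. The decisive observation in every case is that both summands of the recursion land in the same degree, which is exactly what makes $\g{r,i}{n}$ homogeneous and pins down its degree.

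For the graded resolution, I would first note that the subalgebra $E=\ground\bas{0}$ of trivial paths lies in degree $0$ for both gradings, so the prescribed degrees on $P_n=\alg\T_E\ground\bas{n}\T_E\alg$ are well defined and make each $P_n$ a graded $\alg^e$-module. The augmentation $\epsilon$ is multiplication and hence degree-preserving. It then remains to verify that each $d_n$ preserves both gradings, and by $\alg^e$-linearity it suffices to check this on the generators $1\T\g{r,i}{n}\T 1$, whose degree is $d(\g{r,i}{n})$. Using the formulas from the first part, I would compute the degree of every term appearing in $d_n(1\T\g{r,i}{n}\T 1)$: in the cases $n-2r>0$ and $n-2r<0$ all four terms have degree $d(\g{r,i}{n})$, while in the case $n=2r$ all four families of terms have degree $rN$ independently of the summation index $k$. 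The same check applies verbatim to $\ol{d}$.

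The computations are lengthy but routine; the only genuine obstacle is the bookkeeping at the \emph{boundary} of the trichotomy, where a sub-term such as $\g{r,i}{n-1}$ or $\g{r-1,i}{n-1}$ has upper index minus twice lower index equal to $0$, and one must decide which branch of the degree formula applies. This occurs, for instance, in the first summand when $n-2r=1$ and in the second summand when $n-2r=-1$. Here one uses that the two branches agree at the value $0$ (both giving $rN$), so the formula is unambiguous and every computation closes up; verifying this consistency at the seam is the step I would treat most carefully.
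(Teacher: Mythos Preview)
Your proposal is correct and follows essentially the same approach as the paper's proof: induction on $n$ using the recursive definition of $\g{r,i}{n}$ for the degree formulas, followed by a case-by-case check that each summand of $d_n(1\T\g{r,i}{n}\T 1)$ has degree $d(\g{r,i}{n})$ (respectively $\ol{d}(\g{r,i}{n})$). Your discussion of the boundary cases and the shortcut $d(\g{r,i}{n})+\ol{d}(\g{r,i}{n})=n-2r$ are useful elaborations not made explicit in the paper, but the overall strategy is the same.
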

\begin{proof}
To compute the degree of $g_{r,i}^n$, recall the recursive definition of $g_{r,i}^n$ given in~Section~\ref{sec:resolution}, and use induction on $n$.
For the graded projective resolution, observe that the differential maps $d_n$, defined in Section~\ref{sec:resolution}, preserve the gradings: 
a case-by-case analysis shows that all the summands appearing in the image of the differential $d_n(1\otimes \g{r,i}{n}\otimes 1)$ are again of degree $d(\g{r,i}{n})$ and $\ol{d}(\g{r,i}{n})$, respectively. Here, we use the explicit degree formulas for $\g{r,i}{n}$.
\end{proof}

Now we can apply the observations from Remark \ref{rem:SA} to both $d$ and $\ol{d}$ and obtain the following result:

\begin{prop}\label{prop:lift}
The $(\delta_d)^e$-operator $(\delta_d)_n:P_n\rar P_n$ given by
$$(\delta_d)_n (1\T\g{r,i}{n}\T1) = d(\g{r,i}{n})(1\T\g{r,i}{n}\T1 )$$
defines a
$(\delta_d)^e$-lifting $(\delta_d)_{\bullet}$ of $\delta_d$ to $P_{\bullet}$. 
Similarly, the $(\delta_{\ol{d}})^{e}$-operator $(\delta_{\ol{d}})_n:P_n\rar P_n$ given by
$$(\delta_{\ol{d}})_n (1\T\g{r,i}{n}\T1) = \ol{d}(\g{r,i}{n})(1\T\g{r,i}{n}\T1 )$$
defines a
$(\delta_{\ol{d}})^e$-lifting $(\delta_{\ol{d}})_{\bullet}$ of $\delta_{\ol{d}}$ to $P_{\bullet}$.  
\end{prop}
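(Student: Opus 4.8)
The plan is to recognize Proposition~\ref{prop:lift} as an immediate application of Remark~\ref{rem:SA}(2), with essentially all of the substantive work already carried out in Lemma~\ref{lem:deg}. First I would recall, from the discussion preceding Lemma~\ref{lem:deg}, that $\delta_d$ and $\delta_{\ol{d}}$ are precisely the Eulerian derivations on $\alg$ associated to the gradings $d$ and $\ol{d}$; that is, they are $\delta_{\mathsf{deg}}$ for $\mathsf{deg}=d$ and for $\mathsf{deg}=\ol{d}$ respectively, in the notation of Remark~\ref{rem:SA}(2). This identifies the two $\delta$-operators appearing in the statement with exactly the Eulerian derivations to which that remark applies.

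Next I would invoke Lemma~\ref{lem:deg}, whose final assertion is precisely the hypothesis required by Remark~\ref{rem:SA}(2): the complex $P_\bullet\srar\alg$ is a graded projective resolution of $\alg$ by graded $\alg^e$-modules with respect to both $d$ and $\ol{d}$, i.e. each $P_n$ is graded and every differential $d_n$ preserves the grading. Granting this, Remark~\ref{rem:SA}(2) directly produces a $(\delta_d)^e$-lifting of $\delta_d$ and a $(\delta_{\ol{d}})^e$-lifting of $\delta_{\ol{d}}$, each given on a homogeneous element $p\in P_n$ by the scaling $p\mapsto \mathsf{deg}_n(p)\,p$.

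It then remains only to check that these liftings coincide with the maps written in the statement. Since $P_n$ is generated as an $\alg^e$-module by the elements $1\T\g{r,i}{n}\T1$, and since the trivial paths have degree $0$, the degree of such a generator is $d(1\T\g{r,i}{n}\T1)=d(1)+d(\g{r,i}{n})+d(1)=d(\g{r,i}{n})$, and likewise for $\ol{d}$. Hence the scaling map of Remark~\ref{rem:SA}(2) sends $1\T\g{r,i}{n}\T1$ to $d(\g{r,i}{n})(1\T\g{r,i}{n}\T1)$, which is exactly the formula in the proposition; the $(\delta_d)^e$-operator property then extends this uniquely to all of $P_n$, giving $(\delta_d)_n(a\T\g{r,i}{n}\T a')=d(a\T\g{r,i}{n}\T a')(a\T\g{r,i}{n}\T a')$ on homogeneous elements, consistent with the grading definition.

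I do not expect any genuine obstacle here: the content of the proposition is formal once Lemma~\ref{lem:deg} is in hand, and the only real labor (the case-by-case verification that the differentials preserve degree) has already been absorbed into that lemma. If one preferred not to cite Remark~\ref{rem:SA}(2) as a black box, the only things to verify directly would be that each $(\delta_d)_n$ satisfies the $(\delta_d)^e$-operator identity $(\delta_d)_n(x\cdot p)=(\delta_d)^e(x)\,p+x\,(\delta_d)_n(p)$, which follows from additivity of degree under the $\alg^e$-action, and that $(\delta_d)_n$ commutes with $d_n$, which is equivalent to $d_n$ being degree-preserving and is therefore again exactly the content of Lemma~\ref{lem:deg}.
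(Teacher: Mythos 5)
Your proposal is correct and matches the paper's own (implicit) argument exactly: the paper states Proposition~\ref{prop:lift} as an immediate consequence of applying Remark~\ref{rem:SA}(2) to the gradings $d$ and $\ol{d}$, with all the substantive work (homogeneity of the $\g{r,i}{n}$ and degree-preservation of the differentials) contained in Lemma~\ref{lem:deg}. Your additional verification that the generator $1\T\g{r,i}{n}\T1$ has degree $d(\g{r,i}{n})$ and that the $(\delta_d)^e$-operator identity follows from additivity of degree is a correct unpacking of what the remark asserts.
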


\begin{rem}\label{rem:gradhh} 
Since $P_{\bullet}\srar A$ is a graded projective resolution of $A$ with respect to the grading $d$, the cohomology $\hh^n(\alg)$ inherits a grading $d$ for every $n\geq 0$ as usual. 
More precisely, the $\ground$-module $\hom{\alg^e}(P_n,\alg) \cong \hom{E^e}(\ground \bas{n},\alg)$ inherits a grading $d$ by setting	
$$
d((\g{r,i}{n}\parallel u))=d(u)-d(\g{r,i}{n}),
$$
for $u\in\alg$ homogeneous and parallel to $g^n_{r,i}$.
This grading on $\hom{\alg^e}(P_{n},\alg)$ behaves well with respect to the differential, so the cohomology $\hh^n(\alg)$ acquires a grading $d$ for every $n\geq 0$.
	
Similarly, since $P_{\bullet}\srar A$ is graded with respect to the grading $\ol{d}$, the $\ground$-module $\hom{\alg^e}(P_n,\alg)$ and the cohomology $\hh^n(\alg)$ inherit a grading $\ol{d}$ for every $n\geq 0$. 
We note that the basis for $\hh^n(\alg)$ given in~Section~\ref{cohomology} consists of homogenous elements with respect to both gradings. \end{rem}

The gradings on $\alg$ and on $\hh^0(\alg)$, considered as the center of $\alg$, agree. In this case we can compute the brackets of $\varphi_{1,0}$ and $\psi_{1,0}$ with generators of $\hh^0(\alg)$ directly as follows.
\begin{align*}
&[\varphi_{1,0},1]=0 
&& \text{and} &&
[\psi_{1,0},1]=0, \\
&[\varphi_{1,0},\varepsilon_i]=\delta_d((a_i\ol{a}_i)^N)=N(a_i\ol{a}_i)^N 
&& \text{and} &&
[\psi_{1,0},\varepsilon_i]=\delta_{\ol{d}}((a_i\ol{a}_i)^N)=-N(a_i\ol{a}_i)^N ,\\
&[\varphi_{1,0},f_i]=\delta_d(a_i\ol{a}_i+\ol{a}_ia_i)=a_i\ol{a}_i+\ol{a}_ia_i
&& \text{and} &&
[\psi_{1,0},f_i]=\delta_{\ol{d}}(a_i\ol{a}_i+\ol{a}_ia_i)=-(a_i\ol{a}_i+\ol{a}_ia_i).
\end{align*}

The following proposition is an immediate consequence of Remark \ref{rem:gradhh}. 

\begin{prop}\label{diagonal}
	Let $n\geq 0$. The elements $\varphi_{1,0}$ and $\psi_{1,0}$ in $\hh^1(\alg)$ act diagonally on~$\hh^n(\alg)$. 
	That is, the $\ground$-basis for $\hh^n(\alg)$, given in Sections~\ref{ss:0} and~\ref{ss:n}, is an eigenbasis for the endomorphisms
	$[\varphi_{1,0},-]$ and $[\psi_{1,0},-]$ of $\hh^n(\alg)$. The corresponding eigenvalues are given by the degree under $d$ and $\ol{d}$ respectively, as listed in Table~\ref{tab:diagonaleven} and~Table~\ref{tab:diagonalodd} below.
\end{prop}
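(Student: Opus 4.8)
The plan is to deduce Proposition~\ref{diagonal} almost entirely from the machinery assembled in Remark~\ref{rem:gradhh}, treating it as a formal consequence rather than a computation from scratch. The key conceptual point is that $\varphi_{1,0}$ and $\psi_{1,0}$ are \emph{not} arbitrary elements of $\hh^1(\alg)$: they are the Eulerian derivations $\delta_d$ and $\delta_{\ol{d}}$ attached to the two gradings $d$, $\ol{d}$ on $\alg$. By Lemma~\ref{lem:SA}, the bracket $[\varphi_{1,0},-]$ on $\hh^\bullet(\alg)$ is computed by the induced map $\Delta^\bullet_{\delta_d,\barcx}$, and by part~(1) of Remark~\ref{rem:SA} together with Theorem~\ref{thm:Delta} I may compute this instead on the minimal resolution $P_\bullet$, provided I have a $(\delta_d)^e$-lifting. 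Proposition~\ref{prop:lift} supplies exactly such a lifting, namely the diagonal operator $(\delta_d)_n(1\T\g{r,i}{n}\T 1)=d(\g{r,i}{n})(1\T\g{r,i}{n}\T 1)$.

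With this lifting in hand, I would compute $\Delta^\bullet_{\delta_d,P_\bullet}$ directly on the complex $\hom{\alg^e}(P_n,\alg)\cong\hom{E^e}(\ground\bas{n},\alg)$ using the formula $f_n^{\#}(\phi)(p)=\delta_d(\phi(p))-\phi((\delta_d)_n(p))$ from Section~\ref{sec:approach}. Applying this to a basis cochain $(\g{r,i}{n}\parallel u)$ gives
\[
\Delta^\bullet_{\delta_d,P_\bullet}(\g{r,i}{n}\parallel u)
=\bigl(\g{r,i}{n}\parallel \delta_d(u)\bigr)-d(\g{r,i}{n})\,\bigl(\g{r,i}{n}\parallel u\bigr)
=\bigl(d(u)-d(\g{r,i}{n})\bigr)\,\bigl(\g{r,i}{n}\parallel u\bigr),
\]
where $u$ is a homogeneous path parallel to $\g{r,i}{n}$, so that $\delta_d(u)=d(u)u$. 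This is precisely the statement that $\Delta^\bullet_{\delta_d,P_\bullet}$ is multiplication by the grading $d((\g{r,i}{n}\parallel u))=d(u)-d(\g{r,i}{n})$ defined in Remark~\ref{rem:gradhh}; in other words, $[\varphi_{1,0},-]$ acts on $\hom{\alg^e}(P_n,\alg)$ as the degree operator for the grading $d$, and identically $[\psi_{1,0},-]$ acts as the degree operator for $\ol{d}$. Since the basis cochains are homogeneous and the grading descends to cohomology, these operators act diagonally on $\hh^n(\alg)$ with eigenvalues equal to the $d$- and $\ol{d}$-degrees of the basis elements.

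The remaining task is then purely bookkeeping: I must verify that each named basis element of Sections~\ref{ss:0} and~\ref{ss:n} is genuinely homogeneous under both gradings, and record its degree. Homogeneity of a single term $(\g{r,i}{n}\parallel u)$ is automatic from the explicit degree formulas for $\g{r,i}{n}$ in Lemma~\ref{lem:deg} together with the obvious degrees of the parallel path $u$ (a product of $a$'s and $\ol{a}$'s). For the summed basis elements such as $\chi_{n,\alpha}$, $\varphi_{n,\gamma}$, $\psi_{n,\beta}$, $F_{n,j,s}$, and $E_{n,j,s}$, I must check that all summands share a common $d$-degree and a common $\ol{d}$-degree, which follows because each summand involves $\g{r,i}{n}$ with the \emph{same} first index $r$ (hence the same $\g{r,i}{n}$-degree by Lemma~\ref{lem:deg}) paired with a parallel path of fixed shape (hence fixed path degree). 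Computing the resulting eigenvalues is a direct substitution into the formulas of Lemma~\ref{lem:deg}, which produces the entries of Table~\ref{tab:diagonaleven} and Table~\ref{tab:diagonalodd}. The main obstacle is not any single conceptual step but the sheer case-bookkeeping: the basis depends on the parities of $m$ and $n$ and on $t=n\bmod m$, so one must run the degree computation uniformly across the four cases of Section~\ref{ss:n}, being careful that the exponents $\frac{n-\alpha m}{2}$, $\frac{n-\gamma m-1}{2}$, etc., are exactly the values of $r$ that make $\g{r,i}{n}$ nonzero and parallel to the stated path. I expect that verifying the $\g{r,i}{n}$ produced by these index formulas satisfy the correct $n-2r\gtrless 0$ alternative in Lemma~\ref{lem:deg}, so that the right branch of the piecewise degree formula is applied, will be the most error-prone part.
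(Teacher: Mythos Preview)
Your proposal is correct and follows essentially the same approach as the paper's own proof: compute $[\varphi_{1,0},(\g{r,i}{n}\parallel u)]$ via $\Delta^{n}_{\delta_d,P_\bullet}$ using Lemma~\ref{lem:SA} and the explicit lifting from Proposition~\ref{prop:lift}, obtain $d((\g{r,i}{n}\parallel u))\,(\g{r,i}{n}\parallel u)$, and then invoke Remark~\ref{rem:gradhh} for the homogeneity of the basis. The paper's proof is terser---it simply writes out the chain of equalities and says ``the result now follows from Remark~\ref{rem:gradhh}''---whereas you spell out the bookkeeping about summed basis elements sharing a common $r$, but the substance is identical.
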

\begin{proof}
For $u\in\alg$ homogeneous, using~Lemma~\ref{lem:SA} and Proposition~\ref{prop:lift}, we can compute
\begin{align*}
[\varphi_{1,0}, (\g{r,i}{n}\parallel u)] & =\Delta_{\varphi_{1,0},P_{\bullet}}^{n}((\g{r,i}{n}\parallel u))
 =\delta_d (\g{r,i}{n}\parallel u)- (\g{r,i}{n}\parallel u)(\delta_d)^n \\
&= d(u) (\g{r,i}{n}\parallel u) - d(\g{r,i}{n})(\g{r,i}{n}\parallel u)\\
& = d((\g{r,i}{n}\parallel u))(\g{r,i}{n}\parallel u)
\end{align*}
and
\begin{align*}
[\psi_{1,0}, (\g{r,i}{n}\parallel u)] &=\Delta_{\psi_{1,0},P_{\bullet}}^{n}((\g{r,i}{n}\parallel u))=\delta_{\ol{d}} (\g{r,i}{n}\parallel u)- (\g{r,i}{n}\parallel u)(\delta_{\ol{d}})^n \\
&= \ol{d}(u) (\g{r,i}{n}\parallel u) - \ol{d}(\g{r,i}{n})(\g{r,i}{n}\parallel u)\\
& = \ol{d}((\g{r,i}{n}\parallel u))(\g{r,i}{n}\parallel u).
\end{align*}
The result now follows from Remark~\ref{rem:gradhh}.
\end{proof}

Let us now illustrate this statement by some explicit computations:

\begin{rem}
The $[\varphi_{1,0},-]$-action on $(\g{r,i}{n}\parallel u)$ for a nonzero path $u\in \alg$ can be obtained as follows:
\begin{align*}
[\varphi_{1,0}, (\g{r,i}{n}\parallel u)]  = \begin{cases}
(d(u)- n+2r-rN ) (\g{r,i}{n}\parallel u) & \mbox{if } n-2r \geq 0\\
(d(u)- (n-r)N ) (\g{r,i}{n}\parallel u)
& \mbox{if } n-2r<0,
\end{cases} 
\end{align*}
where $d(u)$ is the number of clockwise arrows that appear in the  path $u$. Similarly,
\begin{align*}
[\psi_{1,0}, (\g{r,i}{n}\parallel u)] =\begin{cases}
	(\ol{d}(u)+ rN) (\g{r,i}{n}\parallel u) & \mbox{if } n-2r \geq 0\\
	(\ol{d}(u)+ (n-r) N  -n +2r)  ) (\g{r,i}{n}\parallel u)
	& \mbox{if } n-2r<0,
	\end{cases}
\end{align*}
where $-\ol{d}(u)$ is the number of counterclockwise arrows that appear in the nonzero path $u\in \alg$.
\end{rem}

\begin{rem} We observe the following:
\begin{enumerate} 
 \item If $f,g \in \{\varphi_{1,0}, \psi_{1,0}\}$ then $[f,g]=0$.
 \item The sum $\varphi_{1,0} + \psi_{1,0}$ is in the center of $\hh^1(\alg)$, as $[\varphi_{1,0} + \psi_{1,0},\hh^1(\alg)]=0$. Moreover, we compute
	\[ [\varphi_{1,0} + \psi_{1,0}, (\g{r,i}{n}\parallel u)] 
	=  \left( d(u) + \ol{d}(u)  - n+2r \right)  (\g{r,i}{n}\parallel u),\] and 
	\[ [\varphi_{1,0} - \psi_{1,0}, (\g{r,i}{n}\parallel u)] = 
	\begin{cases}
	 \left( d(u) - \ol{d}(u) - n+2r- 2r N  \right)   (\g{r,i}{n}\parallel u)               & \mbox{if } n-2r \geq 0 \\
	 \left( d(u) -  \ol{d}(u) -2nN+2rN+n-2r  \right)  (\g{r,i}{n}\parallel u)	& \mbox{if } n-2r<0.
	\end{cases} \]
\end{enumerate}
\end{rem}

\begin{rem} In the case $N=1$, the authors of \cite{GTW} define three gradings on $\alg$:
the first one is obtained by setting the degree of $a_i$ equal to
$1$ for all $i$, while the degree of any other generator is zero;
the second grading is simply the path length grading. They note that with
the given relations, any path is at most of length $2$ in the algebra, hence they consider this second grading as a $\mathbb{Z}/2\mathbb{Z}$
grading. The third grading is defined by setting the degree of $a_{m-1}$ equal to $1$, the degree of $\ol{a}_{m-1}$ equal to $-1$, and the degree of any other arrow is zero. The first grading mentioned in \cite{GTW} is in fact $d$, while the second one corresponds to $d-\overline{d}$. The third grading corresponds to a Eulerian derivation which is a linear combination of the preceding two and coboundaries.
\end{rem}

In the following Table~\ref{tab:diagonaleven} and~Table~\ref{tab:diagonalodd}, we record the eigenvalues of the endomorphisms $[\varphi_{1,0},\ ]$, $[\psi_{1,0}, \ ]$, $[\varphi_{1,0} + \psi_{1,0}, \ ]$ and $[\varphi_{1,0} - \psi_{1,0},\ ]$ of $\hh^n(\alg)$. The basis elements of $\hh^n(\alg)$, with explicit conditions on the indices, are given in Section~\ref{ss:n}. We let $i \in \mathbb{Z}/m\mathbb{Z}$, $1 \leq s \leq N-1$, and recall that we write $n=pm +t$ with $p\geq 0$ and $0\leq t\leq m-1$.


\newcommand\MyLBrace[2]{%
	\text{#2}\left\{\rule{0pt}{#1}\right.}

\begin{landscape}
	\renewcommand{\arraystretch}{1.3}
\FloatBarrier
\begin{table}[!htp]
	
\begin{center}
		\caption{Eigenvalues when $m \geq 3$ is \textbf{even}.}  		  	\label{tab:diagonaleven}
		\vspace{0.1cm}
		\begin{tabular}{c@{}l}
			$\begin{array}{r}
			\vspace{0.4ex}\\
			\MyLBrace{5ex}{$n=0$} 
			\vspace{3ex}\\ 
			\vspace{3ex}
			\MyLBrace{9.5ex}{$n$ even} \\
			\MyLBrace{10ex}{$n$ odd} 
			\end{array}$
			&
		\begin{tabular}{| p{0.9cm} p{2.5cm} | p{3.6cm} | p{3.6cm} | p{2.5cm} | p{4.2cm} |}

			\hline
			$\mathbf{\hh^n}$ & 
			& $\mathbf{[\varphi_{1,0},\ \ ]}$  
			& $\mathbf{[\psi_{1,0},\ \ ]}$ 
			& $\mathbf{[\varphi_{1,0}+\psi_{1,0},\ \ ]}$
			& $\mathbf{[\varphi_{1,0}-\psi_{1,0},\ \ ]}$
			\\ \hline   \hline  
			
			$1$ &
			& $0$ 
			& $0$ 
			& $0$
			& $0$
			\\ \hline

			$\varepsilon_i$ 
			& 
			& $N$ 
			& $-N$ 
			& $0$
			& $2N$
			\\ \hline
			
			$f_i^s$  
			& 
			& $s$ 
			& $-s$
			& $0$
			& $2s$
			\\ \hline \hline \hline 
			
			$\chi_{n,\alpha}$  
			& $\begin{cases}
					\mbox{with } \alpha \geq 0  \\
					\mbox{with } \alpha<0 \end{cases} $
			& $\begin{cases}
					-\alpha m- \left(\frac{n-\alpha m}{2}\right)N  \\
					-\left(\frac{n+\alpha m}{2}\right) N  \end{cases} $
			& $\begin{cases}
					\left(\frac{n-\alpha m}{2}\right)N   \\
					- \alpha m + \left(\frac{n+\alpha m}{2}\right) N \end{cases}$
			& $-\alpha m$ 
			& $\begin{cases}
					-\alpha m -\left(n-\alpha m\right)N \\
					\alpha m -\left(n+\alpha m\right)N \end{cases}$
			\vspace{0.5em} \\  \hline

			$\pi_{n,\alpha}$ 
			& $\begin{cases}
					\mbox{with } \alpha \geq 0  \\
					\mbox{with } \alpha<0 \end{cases} $
			& $\begin{cases}
					-\alpha m- \left(\frac{n-\alpha m - 2}{2}\right)N  \\
					-\left(\frac{n+\alpha m-2}{2}\right) N  \end{cases}$  
			& $\begin{cases}
					\left(\frac{n-\alpha m-2}{2}\right)N    \\
					-\alpha m + \left(\frac{n+\alpha m-2}{2}\right) N  \end{cases}$
			& $-\alpha m$
			& $\begin{cases}
					-\alpha m -\left(n-\alpha m-2\right)N \\
					\alpha m -\left(n+\alpha m-2\right)N \end{cases}$
			\vspace{0.3em}\\  \hline
			
			$F_{n,i,s}$ 
			& 
			& $s-\frac{n}{2}N$ 
			& $\frac{n}{2}N - s $
			& $0$
			& $2s-nN$
			\\  \hline \hline \hline 
			
			$\varphi_{n,\gamma}$ 
			& $\begin{cases}
					\mbox{with } \gamma \geq 0 \\ 
					\mbox{with } \gamma < 0 \end{cases} $
			& $\begin{cases}
					-\gamma m-\left(\frac{n-\gamma m-1}{2}\right)N \\
					-\left(\frac{n+\gamma m-1}{2}\right)N \end{cases}$ 
			& $\begin{cases}
					\left(\frac{n-\gamma m-1}{2}\right)N \\
					-\gamma m + \left(\frac{n + \gamma m-1}{2}\right)N \end{cases}$
			& $-\gamma m$
			& $\begin{cases}
					-\gamma m -\left(n-\gamma m-1\right)N \\
					\gamma m -\left(n+\gamma m-1\right)N \end{cases}$
			\vspace{0.3em} \\  \hline
			
			$\psi_{n,\beta}$ 
			& $\begin{cases}
					\mbox{with } \beta >0\\ 
					\mbox{with } \beta \leq 0  \end{cases}$
			& $\begin{cases}
					-\beta m-\left(\frac{n-\beta m-1}{2}\right)N\\ 
					-\left(\frac{n+\beta m-1}{2}\right)N \end{cases}$
			& $\begin{cases}
					\left(\frac{n-\beta m-1}{2}\right)N \\
					-\beta m+\left(\frac{n+\beta m-1}{2}\right)N\end{cases}$
			& $-\beta m$
			& $\begin{cases}
					-\beta m -\left(n-\beta m-1\right)N \\
					\beta m -\left(n+\beta m-1\right)N \end{cases}$
			\vspace{0.3em} \\  \hline	
			
			$E_{n,i,s}$ 
			& 
			& $s-\frac{n-1}{2}N$ 
			& $\frac{n-1}{2}N - s $ 
			& $0$
			& $2s-(n-1)N$
			\\ \hline
		\end{tabular}
	\end{tabular}
	\end{center}
\end{table} 
\end{landscape}


\begin{landscape}
	\renewcommand{\arraystretch}{1.3}
\FloatBarrier
\begin{table}[!htp]
\begin{center}
		\caption{Eigenvalues when $m \geq 3$ is \textbf{odd}.} \label{tab:diagonalodd}
		\vspace{0.1cm}
		\begin{tabular}{c@{}l}
			$\begin{array}{r}
			\vspace{0.4ex}\\
			\MyLBrace{5ex}{$n=0$} 
			\vspace{3ex}\\ 
			\vspace{3ex}
			\MyLBrace{13ex}{$n$ even} \\
			\MyLBrace{13.6ex}{$n$ odd} 
			\end{array}$
						&
		\begin{tabular}{| p{0.9cm} p{3.6cm} | p{3.6cm} | p{3.6cm} | p{2.5cm} | p{4.2cm} |}
			
			\hline
			$\mathbf{\hh^n}$ & 
			& $\mathbf{[\varphi_{1,0},\ \ ]}$  
			& $\mathbf{[\psi_{1,0},\ \ ]}$ 
			& $\mathbf{[\varphi_{1,0}+\psi_{1,0},\ \ ]}$
			& $\mathbf{[\varphi_{1,0}-\psi_{1,0},\ \ ]}$
			\\ \hline   \hline
			 
			$1$ &
			& $0$ 
			& $0$ 
			& $0$
			& $0$
			\\ \hline
			
			$\varepsilon_i$ 
			&  
			& $N$ 
			& $-N$ 
			& $0$
			& $2N$
			\\ \hline
			
			$f_i^s$  
			& 
			& $s$ 
			& $-s$
			& $0$
			& $2s$
			\\ \hline \hline \hline 
			
			$\chi_{n,\delta}$  
			& $\begin{cases}
					\mbox{with } \delta \geq 0   \\
					\mbox{with } \delta<0 \end{cases} $
			& $\begin{cases}
					-\delta m- (\frac{n-\delta m}{2})N\\
					-(\frac{n+\delta m}{2}) N  \end{cases}$
			& $\begin{cases}
					(\frac{n-\delta m}{2})N   \\
					-\delta m + (\frac{n+\delta m}{2}) N \end{cases}$   
			& $-\delta m$
			& $\begin{cases}
					-\delta m- (n-\delta m)N   \\
					\delta m - (n+\delta m)N\end{cases}$   
			\vspace{0.5em}\\  \hline

			$\pi_{n,\delta}$  
			& $\begin{cases}
					\mbox{with } \delta \geq 0  \\
					\mbox{with } \delta<0 \end{cases} $
			& $\begin{cases}
					-\delta m- (\frac{n-\delta m - 2}{2})N\\
					-(\frac{n+\delta m-2}{2}) N  \end{cases}$  
			& $\begin{cases}
					(\frac{n-\delta m-2}{2})N    \\
					- \delta m + (\frac{n+\delta m-2}{2}) N  \end{cases}$ 
			& $-\delta m$
			& $\begin{cases}
					-\delta m- (n-\delta m-2)N   \\
					\delta m - (n+\delta m-2)N \end{cases}$ 
			\vspace{0.3em}\\  \hline
			
			$F_{n,i,s}$ 
			& 
			& $s-\frac{n}{2}N$ 
			& $\frac{n}{2}N - s $
			&  $0$
			& $2s-nN$
			\\  \hline
			
			$\varphi_{n,\sigma}$ 
			&
			& $N$
			& $-\sigma m -N $
			& $-\sigma m$
			& $2N+\sigma m$ 
			\\  \hline
			
			$\psi_{n,\tau}$
			& 
			& $-\tau m+N$
			& $-N$
			& $-\tau m$
			& $2N-\tau m$ 
			\\  \hline \hline \hline 
			
			$\pi_{n,\delta}$
			& $\begin{cases}
						\mbox{with } t=0, \ \delta= p   \\
						\mbox{with } t=0, \ \delta=- p \ \end{cases}$
			& $\begin{cases}
						-\delta m +N\\
						N  \end{cases}$  
			& $\begin{cases}
						-N    \\
						- \delta m -N   \end{cases}$ 
			& $-\delta m$
			& $\begin{cases}
						2N-\delta m    \\
						2N+\delta m   \end{cases}$
			\vspace{0.3em} \\  \hline
			
			$\varphi_{n,\sigma}$ 
			& $\begin{cases}
						\mbox{with } \sigma \geq 0 \\
						\mbox{with } \sigma < 0 \end{cases} $
			& $\begin{cases}
						-\sigma m-\left(\frac{n-\sigma m-1}{2}\right)N \\
						-\left(\frac{n+\sigma m-1}{2}\right)N \end{cases}$ 
			& $\begin{cases}
						\left(\frac{n-\sigma m-1}{2}\right)N \\
						-\sigma m + \left(\frac{n + \sigma m-1}{2}\right)N \end{cases}$ 
			& $-\sigma m$
			& $\begin{cases}
						-\sigma m - (n-\sigma m-1)N \\
						\sigma m- (n+\sigma m-1)N \end{cases}$ 
			\vspace{0.3em} \\  \hline
			
			$\psi_{n,\tau}$ 
			& $\begin{cases}
						\mbox{with } \tau >0\\ 
						\mbox{with } \tau \leq 0  \end{cases}$
			& $\begin{cases}
						-\tau m-\left( \frac{n-\tau m-1}{2}\right) N\\
						-\left(\frac{n+\tau m-1}{2}\right)N \end{cases}$ 
			& $\begin{cases}
						\left(\frac{n-\tau m-1}{2}\right)N \\ 
						-\tau m+\left(\frac{n+\tau m-1}{2}\right)N\end{cases}$ 
			& $-\tau m$
			& $\begin{cases}
						-\tau m- (n-\tau m-1)N   \\
						\tau m - (n+\tau m-1)N \end{cases}$ 					
			\vspace{0.3em} 	\\  \hline	
			
			$E_{n,i,s}$
			& 
			& $s-\frac{n-1}{2}N$ 
			& $\frac{n-1}{2}N - s $
			&  $0$
			& $2s-(n-1)N$
			\\ \hline
		\end{tabular}
	\end{tabular}
	\end{center}
\end{table} 
\end{landscape}



\section{Gerstenhaber brackets on $\hh^n(A)$}
\label{general brackets}

Let $X \in \hh^n(\alg)$ and $Y \in \hh^q(\alg)$ be elements in the $\ground$-basis for $\hh^{\bullet}(\alg)$ listed in~Sections~\ref{ss:0} and~\ref{ss:n}.
In this section, we describe methods to compute the bracket $[X,Y]$, and we provide these brackets explicitly when $X,Y$ are algebra generators of $\hh^{\bullet}(\alg)$, as listed in~Sections~\ref{m even ring} and~\ref{m odd ring}. 
\begin{method}\label{method0}
In Section~\ref{brackets HH1}, we showed that we can write
\begin{align*}
[\varphi_{1,0},X]&=aX,
&[\varphi_{1,0},Y]&=bY,\\
[\varphi_{1,0}+\psi_{1,0},X]&=a'X, 
&[\varphi_{1,0}+\psi_{1,0},Y]&=b'Y,
\end{align*}
for some scalars $a,a',b,b' \in \ground$. Gerstenhaber brackets satisfy the Jacobi identity, so we have:
$$
[\varphi_{1,0},[X,Y]]=[[\varphi_{1,0},X],Y]+[X,[\varphi_{1,0}, Y]]= (a+b)[X,Y],
$$
and similarly,
$$[\varphi_{1,0}+\psi_{1,0},[X,Y]]=(a'+b')[X,Y].$$
It follows that $[X,Y]$ is either zero or an eigenvector for $[\varphi_{1,0},-]$ and $[\varphi_{1,0}+\psi_{1,0},-]$, with eigenvalues $(a+b)$ and $(a'+b')$ respectively. If $[X,Y] \neq 0$, we can write $[X,Y] \in \hh^{n+q-1}(\alg)$ as a linear combination of basis elements with these exact eigenvalues. If there are no such basis elements, we know $[X,Y]=0$.\
\end{method}

\begin{method}\label{methodc}
	Once we know that $[X,Y]$ is a linear combination of certain eigenvectors, we can often use the Poisson identity
	$$[xy,z] = [x,z]y + (-1)^{|x|(|z|-1)} x[y,z]$$
	to compute the coefficients.
	For convenience, we list the cup products we use here, some of which already appear in~\cite[Theorem 4.8]{ST}. The starred $(*)$ cup products differ from the results given in~\cite[Theorem 4.8]{ST}. \\
	
\textbf{If $m$ is even, we have:} 
	\begin{align*}
	\varepsilon_i\varphi_{1,0}&=0&
	\varphi_{1,0}^2&=0&
	E_{1,j,s}&=f_j^s \varphi_{1,0}&
	\chi_{n,0}\varphi_{1,0}&=\varphi_{n+1, 0} \\
	\varepsilon_i\psi_{1,0}&=0&
	\psi_{1,0}^2&=0&
	E_{1,j,s}&=-f_j^s \psi_{1,0} (*) & 
	\chi_{n,0}\psi_{1,0}&=\psi_{n+1,0}\\
	\varepsilon_i\varphi_{m-1,-1}&=0&
	\varphi_{1,0}\varphi_{m-1,-1}&=0&
	E_{n+1,j,s}&=(-1)^{\frac{n}{2}j}E_{1,j,s}\chi_{n,0}&
	\chi_{n,0}\pi_{2,0} &=\pi_{n+2,0}\\
	\varepsilon_i\psi_{m-1,1}&=0&
	\psi_{1,0}\psi_{m-1,1}&=0&
	F_{n,j,s}&=(-1)^{\frac{n}{2}j}f_j^s \chi_{n,0}&
	\chi_{n,0}\chi_{2,0}&=\chi_{n+2,0}\\
	\varepsilon_if_j&=0&
	\varphi_{m-1, -1}\psi_{m-1,1}&=0&
	&&&\\
	f_i f_j&=\delta_{i,j}f_i^2&
	\varphi_{1,0}\psi_{1,0}&=mN\pi_{2,0}&
	\pi_{2,0}&=(-1)^i\varepsilon_i\chi_{2,0} (*)&
	\chi_{m,1}\varphi_{m-1,-1} &=0\\
	&&
	\varphi_{1,0}\psi_{m-1,1}&=m\pi_{m,1} &
	\pi_{m, 1}&=(-1)^i\varepsilon_i\chi_{m,1}&
	\chi_{m,-1}\psi_{m-1,1} &=0\\
	&&
	\psi_{1,0}\varphi_{m-1,-1}&=-m\pi_{m,-1} (*)&
	\pi_{m, -1}&=(-1)^i\varepsilon_i\chi_{m,-1}&
	& 
\end{align*}
for all $i,j\in\ZZ/m\ZZ$ and $1\leq s\leq N-1$, whenever both sides of the equation exist. \\
 
\textbf{If $m$ is odd, we have:} 
\begin{align*}\setlength{\columnsep}{5cm}
f_i f_j&=\delta_{i,j}f_i^2 
&  E_{n,j,s} &=f_j^s \varphi_{n, 0}  
&F_{n,j,s}&=f_j^s\chi_{n,0}\\
\varepsilon_i\chi_{4,0}&=0
&\chi_{n, 0} \varphi_{1,0} & = \varphi_{n+1, 0} 
&F_{n,j,s} \varphi_{1,0}  & = E_{n+1,j,s} \\ 
f_i\varphi&=-f_i\psi
& \chi_{n, 0} \psi_{1,0} & =  \psi_{n+1,0} 
&  F_{n,j,s} \psi_{1,0}  & = -E_{n+1,j,s} 
 \end{align*}
for all $i,j\in\ZZ/m\ZZ$ and $1\leq s\leq N-1$, whenever both sides of the equation exist.
\end{method}

In the following propositions, we describe the Gerstenhaber brackets among algebra generators of $\hh^{\bullet}(\alg)$, as listed in~Sections~\ref{m even ring} and~\ref{m odd ring}. Recall that the brackets with the generators $\varphi_{1,0}$ and $\psi_{1,0}$ were already computed in~Section~\ref{brackets HH1}, Table~\ref{tab:diagonaleven} and~Table~\ref{tab:diagonalodd}.

\begin{prop}
\label{prop:bracket-m-even}
Suppose $m \geq 3$ is even and let $i,j\in\ZZ/m\ZZ$. Then,
\begin{align*}
[\varepsilon_i,Y]
&=\begin{cases} 
\frac{(-1)^{i+1}}{m}(\varphi_{1,0}+\psi_{1,0}) & \text{ if } Y= \chi_{2,0} \\
(-1)^{i+1} \varphi_{m-1,-1} & \text{ if } Y = \chi_{m,-1} \\
(-1)^{i+1}\psi_{m-1,1} & \text{ if } Y = \chi_{m,1} \\
0 & \text{ if } Y\in\{\varepsilon_j, f_j, \varphi_{m-1,-1}, \psi_{m-1,1}\} ,
\end{cases} \\
[f_i,Y]&=0 \text{ if } Y\in\{\varepsilon_j, f_j, \chi_{2,0}, \varphi_{m-1,-1}, \psi_{m-1,1}, \chi_{m,-1}, \chi_{m,1}\},\\
[\chi_{2,0},Y]&=\begin{cases}
\chi_{m,-1} & \text{ if } Y= \varphi_{m-1,-1} \\
-\chi_{m,1} & \text{ if } Y= \psi_{m-1,1} \\
0 & \text{ if }  Y\in\{f_j, \chi_{2,0}, \chi_{m,-1}, \chi_{m,1}\} ,
\end{cases} \\
[\varphi_{m-1,-1},Y]&=0 \,\,\text{ if } Y\in\{\varepsilon_j, f_j,  \varphi_{m-1,-1}, \psi_{m-1,1}, \chi_{m,-1}, \chi_{m,1}\}, \\
[\psi_{m-1,1},Y]&=0 \,\,\text{ if } Y\in\{\varepsilon_j, f_j,  \varphi_{m-1,-1}, \psi_{m-1,1}, \chi_{m,-1}, \chi_{m,1}\},\\
[\chi_{m,-1},Y]&=0 \,\,\text{ if } Y\in\{f_j, \chi_{2,0}, \varphi_{m-1,-1}, \psi_{m-1,1}, \chi_{m,-1}, \chi_{m,1}\},\\
[\chi_{m,1},Y]&=0 \,\,\text{ if } Y\in\{f_j, \chi_{2,0}, \varphi_{m-1,-1}, \psi_{m-1,1}, \chi_{m,-1}, \chi_{m,1}\}.
\end{align*}
\end{prop}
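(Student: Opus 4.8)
The plan is to treat every bracket in the statement by the same two-step procedure, applying Method~\ref{method0} first and Method~\ref{methodc} only when necessary. For a pair $(X,Y)$ of basis generators I would first read off from Table~\ref{tab:diagonaleven} the eigenvalues $a,a'$ of $X$ and $b,b'$ of $Y$ under $[\varphi_{1,0},-]$ and $[\varphi_{1,0}+\psi_{1,0},-]$. By the Jacobi identity (Method~\ref{method0}) the bracket $[X,Y]\in\hh^{|X|+|Y|-1}(\alg)$ is either zero or a simultaneous eigenvector with eigenvalues $a+b$ and $a'+b'$. So the first task is to scan the basis of the target space $\hh^{|X|+|Y|-1}(\alg)$ described in Section~\ref{ss:n} and record which basis vectors carry exactly this pair of eigenvalues.

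For most of the listed brackets this joint eigenspace turns out to be zero, and the vanishing follows immediately; a few of the diagonal vanishings (such as $[\varphi_{m-1,-1},\varphi_{m-1,-1}]$, where $|\varphi_{m-1,-1}|=m-1$ is odd) are even quicker, following from graded antisymmetry together with $\mathrm{char}\,\ground\neq 2$. The genuinely nonzero cases $[\varepsilon_i,\chi_{2,0}]$, $[\varepsilon_i,\chi_{m,\pm1}]$, $[\chi_{2,0},\varphi_{m-1,-1}]$ and $[\chi_{2,0},\psi_{m-1,1}]$ are exactly those where the joint eigenspace is nonzero; for these I would pass to Method~\ref{methodc}. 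I would also arrange the order of the argument so that all the vanishing brackets are proved before the nonzero ones, since the coefficient extractions below reuse several of the vanishing results.

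The coefficient computations hinge on rewriting one factor as a cup product of degree-one classes and invoking the Poisson identity together with the products recorded in Method~\ref{methodc}. For instance, to pin down $[\varepsilon_i,\chi_{2,0}]$ I would expand $0=[\varepsilon_i\varphi_{1,0},\chi_{2,0}]$ and $0=[\varepsilon_i\psi_{1,0},\chi_{2,0}]$ by Poisson, using $\varepsilon_i\varphi_{1,0}=\varepsilon_i\psi_{1,0}=0$, the relations $\varphi_{1,0}^2=\psi_{1,0}^2=0$, and $\varphi_{1,0}\psi_{1,0}=mN\pi_{2,0}=(-1)^i mN\,\varepsilon_i\chi_{2,0}$; this forces both the $\varphi_{1,0}$- and the $\psi_{1,0}$-coefficients of $[\varepsilon_i,\chi_{2,0}]$ to equal $(-1)^{i+1}/m$. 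Similarly, for $[\chi_{2,0},\varphi_{m-1,-1}]$ I would first compute $[\pi_{2,0},\varphi_{m-1,-1}]$ via $\pi_{2,0}=\tfrac{1}{mN}\varphi_{1,0}\psi_{1,0}$, the bracket eigenvalues of $\varphi_{m-1,-1}$ from Table~\ref{tab:diagonaleven}, and the products $\varphi_{1,0}\varphi_{m-1,-1}=0$ and $\psi_{1,0}\varphi_{m-1,-1}=-m\pi_{m,-1}$, and then convert back using $\pi_{2,0}=(-1)^i\varepsilon_i\chi_{2,0}$, $\pi_{m,-1}=(-1)^i\varepsilon_i\chi_{m,-1}$ and the already-established $[\varepsilon_i,\varphi_{m-1,-1}]=0$.

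I expect the main obstacle to be twofold. The routine but error-prone part is the eigenvalue bookkeeping: the description of the basis of $\hh^{n}(\alg)$ in Section~\ref{ss:n} splits into many cases according to $n\bmod m$ and parities, so verifying that a prescribed eigenvalue pair is unmatched (or matched by a single vector) in each target degree requires care. The conceptually delicate part is the coefficient extraction, since for $[\varepsilon_i,\chi_{2,0}]$ the joint eigenspace is two-dimensional, spanned by $\varphi_{1,0}$ and $\psi_{1,0}$ — so the eigenvalue argument cannot even determine the direction of $[\varepsilon_i,\chi_{2,0}]$, and the Poisson computation must do all the work. It is essential here to use the corrected (starred) cup products of Method~\ref{methodc} rather than those of \cite{ST}, as the signs and scalars would otherwise come out wrong.
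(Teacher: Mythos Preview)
Your plan is essentially the paper's: apply Method~\ref{method0} to narrow the bracket to a joint eigenspace, then Method~\ref{methodc} to extract coefficients. Your coefficient computations for $[\varepsilon_i,\chi_{2,0}]$ match the paper's exactly; for $[\chi_{2,0},\varphi_{m-1,-1}]$ you take a slightly different but equally valid route through $\pi_{2,0}=\tfrac{1}{mN}\varphi_{1,0}\psi_{1,0}$, whereas the paper expands $[\chi_{2,0},\varepsilon_0\varphi_{m-1,-1}]$ directly.

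There is one genuine gap. You assert that the five nonzero brackets are \emph{exactly} those where the joint eigenspace is nonzero, so that Method~\ref{method0} alone disposes of every vanishing bracket. This fails when $N=1$. For instance, $[\varphi_{m-1,-1},\chi_{m,1}]\in\hh^{2m-2}(\alg)$ has $[\varphi_{1,0}+\psi_{1,0},-]$-eigenvalue~$0$ and $[\varphi_{1,0},-]$-eigenvalue $N-m$; when $N=1$ this coincides with the eigenvalue $-(m-1)N$ of $\chi_{2m-2,0}$, so Method~\ref{method0} only yields $[\varphi_{m-1,-1},\chi_{m,1}]=a\,\chi_{2m-2,0}$ and a Poisson computation (the paper uses $0=[\varphi_{1,0}\varphi_{m-1,-1},\chi_{m,1}]$ together with $\chi_{m,1}\varphi_{m-1,-1}=0$) is needed to force $a=0$. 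The same phenomenon occurs for $[\varphi_{m-1,-1},\psi_{m-1,1}]$: when $N=1$ the joint eigenspace in $\hh^{2m-3}(\alg)$ is the two-dimensional span $\langle\varphi_{2m-3,0},\psi_{2m-3,0}\rangle$, and the paper extracts both coefficients via $[\varphi_{1,0}\varphi_{m-1,-1},\psi_{m-1,1}]$ and $[\varphi_{m-1,-1},\psi_{1,0}\psi_{m-1,1}]$. By symmetry $[\psi_{m-1,1},\chi_{m,-1}]$ needs the same treatment. For $N>1$ your claim is correct and Method~\ref{method0} suffices for these cases, but since the proposition covers $N\geq 1$ you must include these extra Poisson arguments.
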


\begin{proof}
	All brackets in the proposition can be computed by using~Methods~\ref{method0} and~\ref{methodc}. We provide a few examples of the computations.
	\begin{itemize}[leftmargin=*]
		\item $[\varepsilon_i, \varphi_{m-1,-1}]=0$:\\
	The eigenvalues for $\varepsilon_i$ and $\varphi_{m-1,-1}$ under $[\varphi_{1,0}+\psi_{1,0},-]$ are $0$ and $m$ respectively. Hence, 
		$[\varepsilon_i, \varphi_{m-1,-1}]\in \hh^{m-2}(\alg)$ is either zero or an eigenvector with eigenvalue~$m$. 
		Since $\hh^{m-2}(\alg)$  has basis elements 
		$\{\chi_{m-2,0}, \pi_{m-2,0}, F_{m-2,j,s}\}_{j,s}$, all with eigenvalue $0$ under $[\varphi_{1,0}+\psi_{1,0},-]$, we conclude that $[\varepsilon_i, \varphi_{m-1,-1}]=0$. \\
		
		\item $[\varepsilon_i,\chi_{2,0}]=\frac{(-1)^{i+1}}{m}(\varphi_{1,0}+\psi_{1,0})$:\\
		The eigenvalues for $\varepsilon_i$ and $\chi_{2,0}$ under $[\varphi_{1,0},-]$ are $N$ and $-N$ respectively. Hence, 
		$[\varepsilon_i,\chi_{2,0}]$ is either zero or an eigenvector with eigenvalue $0$. Since $\hh^{1}(\alg)$  has basis elements 
		$\{\varphi_{1,0}, \psi_{1,0}, E_{1,j,s}\}_{j,s}$, and $E_{1,j,s}$ has eigenvalue $s\neq 0$ under $[\varphi_{1,0},-]$, we know that $[\varepsilon_i,\chi_{2,0}]$ is a linear combination of $\varphi_{1,0}$ and $\psi_{1,0}$, say
		$[\varepsilon_i,\chi_{2,0}]=a\varphi_{1,0}+b\psi_{1,0}$.
		\\
		Using the cup products listed in~Method~\ref{methodc}, we can compute
		$$0=[\varepsilon_i\varphi_{1,0}, \chi_{2,0}]=
		[\varepsilon_i, \chi_{2,0}]\varphi_{1,0}+\varepsilon_i[\varphi_{1,0}, \chi_{2,0}]
		=-bmN\pi_{2,0}-\varepsilon_i N \chi_{2,0}
		=(-bm-(-1)^i)N\pi_{2,0},$$
		and similarly, $0=[\varepsilon_i\psi_{1,0}, \chi_{2,0}]
		=(am+(-1)^i)N\pi_{2,0}.$
		It follows that $a=b=\frac{(-1)^{i+1}}{m}$. \\
				
		\item $[\varepsilon_i,\chi_{m,1}]=(-1)^{i+1}\psi_{m-1,1}$:\\
		By Method~\ref{method0}, we find that $[\varepsilon_i,\chi_{m,1}]=a\psi_{m-1,1}$ for some $a\in\ground$.
		Moreover, we see that $a=(-1)^{i+1}$, using the cup products listed in~Method~\ref{methodc}:
		$$0=[\varepsilon_i\varphi_{1,0}, \chi_{m,1}]=
		[\varepsilon_i, \chi_{m,1}]\varphi_{1,0}+\varepsilon_i[\varphi_{1,0}, \chi_{m,1}]
		=-am\pi_{m,1}-\varepsilon_i m\chi_{m,1}
		=(-a-(-1)^i)m\pi_{m,1}.$$ 
		
		\item $[\chi_{2,0},\varphi_{m-1,-1}]=\chi_{m,-1}$:\\
		By Method~\ref{method0}, we find that $[\chi_{2,0},\varphi_{m-1,-1}]=a\chi_{m,-1}$ for some $a\in\ground$.
		Moreover, $a=1$ since
		\begin{align*}
		0&=[\chi_{2,0},\varepsilon_0\varphi_{m-1,-1}]=
		\varphi_{m-1,-1}[\varepsilon_0,\chi_{2,0}]
		+\varepsilon_0[\chi_{2,0},\varphi_{m-1,-1}] \\ 
		&=-\varphi_{m-1,-1}\frac{\varphi_{1,0}+\psi_{1,0}}{m}
		+a\varepsilon_0 \chi_{m,-1}
		=(-1+a)\pi_{m,-1}.
		\end{align*}
		
		\item $[\varphi_{m-1,-1},\chi_{m,1}]=0$:\\
		By Method~\ref{method0}, we find that $[\varphi_{m-1,-1},\chi_{m,1}]=a\chi_{2m-2,0}$ for some $a\in\ground$. We see $a=0$ since
		\begin{align*}
		0&=[\varphi_{1,0}\varphi_{m-1,-1},\chi_{m,1}]=
		[\varphi_{1,0},\chi_{m,1}]\varphi_{m-1,-1}
		-\varphi_{1,0}[\varphi_{m-1,-1},\chi_{m,1}] \\
		&=-m\chi_{m,1}\varphi_{m-1,-1}
		-a\varphi_{1,0}\chi_{2m-2,0}
		=-a\varphi_{2m-1,0}.
		\end{align*} 
		
		\item $[\varphi_{m-1,-1},\psi_{m-1,1}]=0$:\\
		By Method~\ref{method0}, we find that $[\varphi_{m-1,-1},\psi_{m-1,1}]=a \varphi_{2m-3,0} + b \psi_{2m-3,0}$ for some $a,b\in\ground$.
		Using that $\varphi_{2m-3,0}=\chi_{2m-4}\varphi_{1,0}$ and $\psi_{2m-3,0}=\chi_{2m-4}\psi_{1,0}$, we find
		\begin{align*}
		0&=[\varphi_{1,0}\varphi_{m-1,-1},\psi_{m-1,1}]=
		[\varphi_{1,0},\psi_{m-1,1}]\varphi_{m-1,-1}
		+\varphi_{1,0}[\varphi_{m-1,-1},\psi_{m-1,1}] \\
		&=(N-m)\psi_{m-1,1}\varphi_{m-1,-1}
		+b \varphi_{1,0}\chi_{2m-4}\psi_{1,0}
		=b mN \pi_{2,0}\chi_{2m-4}
		=b mN \pi_{2m-2,0}.
		\end{align*}
		Similarly, 
		$0=[\varphi_{m-1,-1},\psi_{1,0}\psi_{m-1,1}]
		=a mN \pi_{2m-2,0}$.
		It follows that $a=b=0$. \\
		
		\item $[\chi_{m,-1}, \chi_{m,1}]=0$: \\
		The eigenvalues for $\chi_{m,-1}$ and $\chi_{m,1}$ under $[\varphi_{1,0}+\psi_{1,0},-]$ are $m$ and $-m$ respectively. Hence, bracket 
		$[\chi_{m,-1}, \chi_{m,1}]\in \hh^{2m-1}(\alg)$ is either zero or an eigenvector with eigenvalue~$0$. The only basis elements in $\hh^{2m-1}(\alg)$ with eigenvalue $0$ under $[\varphi_{1,0}+\psi_{1,0},-]$ are  
		$\left\{\varphi_{2m-1,0}, \psi_{2m-1,0}, E_{2m-1,j,s}\right\}_{j,s}$.
		\\
		Now, the eigenvalues for $\chi_{m,-1}$ and $\chi_{m,1}$ under $[\varphi_{1,0},-]$ are $0$ and $-m$ respectively. Hence, 
		$[\chi_{m,-1}, \chi_{m,1}]$ is either zero or an eigenvector with eigenvalue $-m$. Since $\varphi_{2m-1,0}$ and $\psi_{2m-1,0}$ have eigenvalue $-(m-1)N\neq -m$ under $[\varphi_{1,0},-]$, and $E_{2m-1,j,s}$ has eigenvalue $s-(m-1)N\neq -m$ under $[\varphi_{1,0},-]$, we conclude that  $[\chi_{m,-1}, \chi_{m,1}]$ is zero. 
	\end{itemize}	
\end{proof}

\begin{prop}\label{prop:bracket-m-odd}
	Suppose $m \geq 3$ is odd. The brackets $[X,Y]$ equal zero for the generators
	 $$X,Y \in\left\{\varepsilon_i,f_i,F_{2,i,1}, \chi_{4,0}, \varphi_{m-1,-1}, \psi_{m-1,1}, \chi_{2m,2}, \chi_{2m,-2}\mid i\in \ZZ/m\ZZ\right\}.$$
\end{prop}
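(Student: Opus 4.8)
The plan is to reuse, in the odd case, the two-step machinery that settled Proposition~\ref{prop:bracket-m-even}: for each pair $X,Y$ from the list, first apply Method~\ref{method0} to locate the simultaneous eigenspace (under $[\varphi_{1,0},-]$ and $[\varphi_{1,0}+\psi_{1,0},-]$) in which $[X,Y]$ is forced to lie, and then, in the few cases where a nonzero target survives, use Method~\ref{methodc} (the Poisson identity together with the cup products of Method~\ref{methodc}) to show the coefficients vanish. Every generator in the list has even homological degree, so each bracket lands in $\hh^{|X|+|Y|-1}(\alg)$, an \emph{odd} cohomology group; I read all eigenvalues off the ``$n$ odd'' block of Table~\ref{tab:diagonalodd}. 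Brackets of two degree-$0$ generators ($\varepsilon_i,f_i$) land in $\hh^{-1}(\alg)=0$ and are immediate.

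The crucial new ingredient, special to $m$ odd, is a parity constraint. For $m$ odd and $n$ odd, a class $\varphi_{n,\sigma}$ (resp.\ $\psi_{n,\tau}$) only exists when $\tfrac{n-\sigma m-1}{2}$ (resp.\ $\tfrac{n-\tau m+1}{2}$) is a non-negative integer, and since both $m$ and $n$ are odd this forces $\sigma$ and $\tau$ to be \emph{even}. Hence the $[\varphi_{1,0}+\psi_{1,0},-]$-eigenvalues occurring in odd cohomology are $0$ (on the $E$-classes), even multiples of $m$ (on the $\varphi_{n,\sigma},\psi_{n,\tau}$), and $\pm pm$ on the classes $\pi_{n,\pm p}$, which occur only when $m\mid n$. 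Adding the generator eigenvalues $\{0,0,0,0,m,-m,-2m,2m\}$ for $\{\varepsilon_i,f_i,F_{2,i,1},\chi_{4,0},\varphi_{m-1,-1},\psi_{m-1,1},\chi_{2m,2},\chi_{2m,-2}\}$ and comparing against the degree $|X|+|Y|-1$: whenever the target $[\varphi_{1,0}+\psi_{1,0},-]$-eigenvalue is an \emph{odd} multiple of $m$, it can only be matched on a $\pi$-class, which forces $m\mid (|X|+|Y|-1)$. A short degree check shows this divisibility fails for every such pair except $[F_{2,i,1},\varphi_{m-1,-1}]$ and $[F_{2,i,1},\psi_{m-1,1}]$ (both landing in degree $m$), which I then kill by the finer $[\varphi_{1,0},-]$-eigenvalue (it equals $1$, whereas $\pi_{m,\mp1}$ has eigenvalue $N\neq1$). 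For the pairs whose target eigenvalue is a nonzero even multiple of $m$ (the self-brackets $[\varphi_{m-1,-1},\varphi_{m-1,-1}]$, $[\psi_{m-1,1},\psi_{m-1,1}]$, $[\chi_{2m,\pm2},\chi_{2m,\pm2}]$ and $[\varphi_{m-1,-1},\chi_{2m,\cdot}]$, etc.), I would either certify directly that the required even index $\sigma,\tau$ does not occur in the relevant degree, or rule it out by the $[\varphi_{1,0},-]$-eigenvalue. This disposes of every bracket in which at least one factor lies in $\{\varphi_{m-1,-1},\psi_{m-1,1},\chi_{2m,2},\chi_{2m,-2}\}$.

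What remains are pairs with $X,Y\in\{\varepsilon_i,f_i,F_{2,i,1},\chi_{4,0}\}$, all of whose brackets have $[\varphi_{1,0}+\psi_{1,0},-]$-eigenvalue $0$, hence can only hit the $E$-classes or the classes $\varphi_{n,0},\psi_{n,0}$. Here I would compare the $[\varphi_{1,0},-]$-eigenvalue, which involves $N$: the available values are $s-\tfrac{n-1}{2}N$ with $1\le s\le N-1$ on $E_{n,j,s}$, and $-\tfrac{n-1}{2}N$ on $\varphi_{n,0},\psi_{n,0}$, the latter present only when $n\equiv1\pmod 4$ by Remarks~\ref{rem:indexphi} and~\ref{rem:indexchi}. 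A direct check shows the required eigenvalue is unattainable for all such pairs (for instance $[\varepsilon_i,\chi_{4,0}]$ needs $s=0$ in $\hh^3(\alg)$, and $[F_{2,i,1},F_{2,j,1}]$ needs $s=2-N\le0$), leaving as the only genuinely surviving cases $[\varepsilon_i,F_{2,j,1}]$ (any $N>1$) and $[f_i,F_{2,j,1}]$ (when $N=2$), both landing in $\hh^1(\alg)$ on the span of $\{E_{1,k,1}\}_k$ together with $\{\varphi_{1,0},\psi_{1,0}\}$.

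These residual cases must be closed with the Poisson identity, and I expect them to be the main obstacle. Writing $[\varepsilon_i,F_{2,j,1}]=\sum_k c_k E_{1,k,1}$ and $[f_i,F_{2,j,1}]=a\varphi_{1,0}+b\psi_{1,0}$, I would extract the coefficients by expanding $[\,\cdot\,,F_{2,j,1}\varphi_{1,0}]=[\,\cdot\,,E_{3,j,1}]$ via Poisson, using $F_{2,j,1}\varphi_{1,0}=E_{3,j,1}=-F_{2,j,1}\psi_{1,0}$, the relations $E_{n,j,s}=f_j^s\varphi_{n,0}$, $f_i\varphi_{1,0}=-f_i\psi_{1,0}$ and $f_if_j=\delta_{ij}f_i^2$ from Method~\ref{methodc}, the graded-commutativity identities $\varphi_{1,0}^2=\psi_{1,0}^2=0$, and the known actions $[\varphi_{1,0},\varepsilon_i]=N\varepsilon_i$, $[\varphi_{1,0},f_i]=f_i$, $[\varphi_{1,0},F_{2,j,1}]=(1-N)F_{2,j,1}$ from Table~\ref{tab:diagonalodd}. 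The difficulty is precisely that here the eigenvalues no longer separate the index $k$ nor the $\varphi_{1,0}$/$\psi_{1,0}$ directions, so the answer depends on the still-unrecorded cup products (notably $\varepsilon_iF_{2,j,1}$, $f_iF_{2,j,1}$, and $\varphi_{1,0}\psi_{1,0}$) and on keeping careful track of which low-degree classes actually occur; for example $\varphi_{3,0}$ does not exist, so $E_{3,j,1}$ must be accessed through $F_{2,j,1}\varphi_{1,0}$ rather than through $f_j\varphi_{3,0}$. A secondary, purely combinatorial obstacle is certifying the nonexistence of the various $\varphi_{n,\sigma},\psi_{n,\tau}$ invoked in the second paragraph, which reduces to checking the index conditions of Section~\ref{ss:n} against the parity of $n$ and the value of $t$.
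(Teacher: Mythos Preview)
Your overall strategy coincides with the paper's: use Method~\ref{method0} (the simultaneous eigenspace constraint under $[\varphi_{1,0},-]$ and $[\varphi_{1,0}+\psi_{1,0},-]$) to kill almost all brackets, then finish the survivors with the Poisson identity. The paper's proof is far terser than yours; it simply asserts that Method~\ref{method0} alone disposes of every bracket except $[\varepsilon_i,F_{2,j,1}]$, without writing out the parity bookkeeping you carry through.

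The substantive difference is in how the residual case $[\varepsilon_i,F_{2,j,1}]$ is closed. Your proposed route, expanding $[\varepsilon_i,F_{2,j,1}\varphi_{1,0}]$ via Poisson, runs into exactly the obstacle you flag: it requires cup products such as $\varepsilon_iF_{2,j,1}$ and $\varphi_{1,0}\psi_{1,0}$ that are not recorded for $m$ odd, and moreover $E_{1,k,1}\varphi_{1,0}=f_k\varphi_{1,0}^2=0$, so multiplying by $\varphi_{1,0}$ annihilates precisely the coefficients you want to extract. The paper instead multiplies by $\chi_{4,0}$: since $\varepsilon_i\chi_{4,0}=0$ (listed in Method~\ref{methodc}) and $[\chi_{4,0},F_{2,j,1}]=0$ is already established by Method~\ref{method0}, Poisson gives
\[
0=[\varepsilon_i\chi_{4,0},F_{2,j,1}]=\chi_{4,0}[\varepsilon_i,F_{2,j,1}]=\sum_k a_k\,\chi_{4,0}E_{1,k,1}=\sum_k a_k\,E_{5,k,1},
\]
and linear independence of the $E_{5,k,1}$ forces all $a_k=0$. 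This avoids every unrecorded product.

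One point where you are more careful than the paper: you notice that for $N=2$ the bracket $[f_i,F_{2,j,1}]$ also survives Method~\ref{method0} (target $[\varphi_{1,0},-]$-eigenvalue $2-N=0$, matching $\varphi_{1,0},\psi_{1,0}$). The paper does not single this out. It can be handled by the same $\chi_{4,0}$ trick together with $f_i^N=\varepsilon_i+\varepsilon_{i+1}$, but your observation is correct and worth recording.
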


\begin{proof}
	All brackets in the proposition other than $[\varepsilon_i, F_{2,j,1}]$ for $i,j\in \ZZ/m\ZZ$ can be computed using~Method~\ref{method0}. To show $[\varepsilon_i, F_{2,j,1}]=0$, we first use Method~\ref{method0} to write $[\varepsilon_i, F_{2,j,1}]=\sum_{k\in \ZZ/m\ZZ}a_k E_{1,k,1}$ for some~$a_k\in \ground$.
	Noting that $\varepsilon_i\chi_{4,0}=0$, we find
	$$0=[\varepsilon_i\chi_{4,0}, F_{2,j,1}]
	=\chi_{4,0}[\varepsilon_i, F_{2,j,1}]
	+\varepsilon_i[\chi_{4,0}, F_{2,j,1}]
	=\sum_{k\in \ZZ/m\ZZ}a_k  \chi_{4,0} E_{1,k,1}
	=\sum_{k\in \ZZ/m\ZZ}a_k  E_{5,k,1}.$$
	It follows that $a_k=0$ for all $k\in \ZZ/m\ZZ$. Hence, $[\varepsilon_i, F_{2,j,1}]=0$.
\end{proof}



\section{Gerstenhaber brackets with $E_{1,j,s}$}
\label{E brackets}

In Section~\ref{cohomology}, we described a $\ground$-linear basis of $\hh^n(\alg)$ for every $n \geq 0$.
In particular, the basis for $\hh^1(\alg)$ was given by 
$$\left\{\varphi_{1,0}, \,\psi_{1,0}, \,E_{1,j,s} \mid j\in \ZZ/m\ZZ, \,1\leq s\leq N-1\right\}.$$
The brackets of $\varphi_{1,0}$ and $\psi_{1,0}$ with $\ground$-basis elements of $\hh^n(\alg)$ were computed in Section~\ref{brackets HH1}.
In this section, we compute the brackets of $E_{1,j,s}\in\hh^{1}(\alg)$ with all basis elements of $\hh^n(\alg)$. This allows us to give a complete description of the Lie structure of $\hh^1(\alg)$ and its action on $\hh^n(\alg)$ in Section~\ref{sec:lie-structure}.

\begin{prop}\label{prop:bracket-m-even-E} 
	Suppose $m \geq 3$ is even. Let $i,j\in\ZZ/m\ZZ$, $1\leq s,r\leq N-1$, and let indices $\alpha, \,\gamma,\,\beta$ be as in Sections~\ref{subsec:m even n even} and~\ref{subsec:m even n odd}. We write $E_{n,j,r}:=0$ and $F_{n,j,r}:=0$ for~$r\geq N$. Then,
\begin{align*} 
[E_{1,j,s}, \varepsilon_i]&=0\\
[E_{1,j,s}, f_i]&=\delta_{i,j} f_j^{s+1}\\
[E_{1,j,s},\chi_{n,\alpha}] &= 
\begin{cases}
-(-1)^{\frac{n}{2}j}\frac{n}{2}N F_{n,j,s} & \mbox{ if } \alpha = 0\\
0 & \mbox{ if } \alpha \neq 0, \end{cases}
\\
[E_{1,j,s},\pi_{n,\alpha}]&= 0,
\\
[E_{1,j,s},F_{n,i,r}]
&=\delta_{i,j} \left(r-\frac{n}{2}N\right)F_{n,j,s+r},
\\
[E_{1,j,s}, \varphi_{n,\gamma}]
&= \begin{cases}
-(-1)^{\frac{n-1}{2}j}(s+\frac{n-1}{2}N) E_{n,j,s} 
& \mbox{ if } \gamma = 0\\
0 & \mbox{ if } \gamma \neq 0, \end{cases} 
\\
[E_{1,j,s}, \psi_{n,\beta}]
&= \begin{cases}
(-1)^{\frac{n-1}{2}j}(s+\frac{n-1}{2}N) E_{n,j,s} 
& \mbox{ if } \beta = 0 \\
0 & \mbox{ if } \beta \neq 0 ,\end{cases}
\\
[E_{1,j,s}, E_{n,i,r}]
&= \delta_{i,j}\left(r-s-\frac{n-1}{2}N\right)E_{n,j,s+r}.
\end{align*}
\end{prop}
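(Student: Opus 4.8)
The key structural observation I would build on is that, because $E_{1,j,s}$ sits in homological degree $1$, the operator $D:=[E_{1,j,s},-]$ is a graded derivation of the cup product of degree $|E_{1,j,s}|-1=0$; that is, $D$ is an ordinary (even) derivation of the graded-commutative ring $\hh^{\bullet}(\alg)$, so $D(XY)=D(X)Y+XD(Y)$. This reduces every bracket $[E_{1,j,s},Y]$ to (i) the value of $D$ on a short list of low-degree generators and (ii) a Leibniz expansion once $Y$ is written as a monomial in those generators using the explicit cup products of Method~\ref{methodc}. For the basis elements that are not conveniently monomial in low generators --- namely $\chi_{n,\alpha},\pi_{n,\alpha}$ with $\alpha\neq0$, all $\pi_{n,\alpha}$, and $\varphi_{n,\gamma},\psi_{n,\beta}$ with nonzero index --- I would instead run the eigenvalue argument of Method~\ref{method0}.

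First I would record the base values. Writing $E_{1,j,s}=f_j^s\varphi_{1,0}=-f_j^s\psi_{1,0}$ (Method~\ref{methodc}) and applying the Poisson identity, together with the degree-$0$ eigenvalues $[\varphi_{1,0},\varepsilon_i]=N\varepsilon_i$, $[\varphi_{1,0},f_i]=f_i$, $[\varphi_{1,0},\chi_{2,0}]=-N\chi_{2,0}$ from Table~\ref{tab:diagonaleven}, and the vanishing brackets $[f_i,\varepsilon_j]=[f_i,f_j]=[f_i,\chi_{2,0}]=0$ from Proposition~\ref{prop:bracket-m-even}, one obtains
\begin{align*}
D(\varepsilon_i)&=0, & D(f_i)&=\delta_{i,j}f_j^{s+1}, & D(\chi_{2,0})&=-(-1)^jNF_{2,j,s},
\end{align*}
using $\varepsilon_if_j=0$, $f_if_j=\delta_{i,j}f_i^2$ and $F_{2,j,s}=(-1)^jf_j^s\chi_{2,0}$. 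Antisymmetry together with $[\varphi_{1,0},E_{1,j,s}]=sE_{1,j,s}$ and $[\psi_{1,0},E_{1,j,s}]=-sE_{1,j,s}$ give $D(\varphi_{1,0})=-sE_{1,j,s}$ and $D(\psi_{1,0})=sE_{1,j,s}$. These five values and the Leibniz rule pin down $D$ on every monomial in $f_j,\varphi_{1,0},\psi_{1,0},\chi_{2,0}$.

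Next I would settle the nonzero cases by writing the target as such a monomial: $\chi_{n,0}=\chi_{2,0}^{n/2}$, $F_{n,i,r}=(-1)^{\frac n2 i}f_i^r\chi_{n,0}$, $\varphi_{n,0}=\chi_{n-1,0}\varphi_{1,0}$, $\psi_{n,0}=\chi_{n-1,0}\psi_{1,0}$ and $E_{n,i,r}=(-1)^{\frac{n-1}{2}i}f_i^r\varphi_{n,0}$, all read off from Method~\ref{methodc}. The power rule gives $D(\chi_{n,0})=\tfrac n2\,\chi_{2,0}^{n/2-1}D(\chi_{2,0})$, and after re-expressing $\chi_{2,0}^{n/2-1}F_{2,j,s}=(-1)^{(\frac n2+1)j}F_{n,j,s}$ this collapses to $D(\chi_{n,0})=-(-1)^{\frac n2 j}\tfrac n2 N\,F_{n,j,s}$; the remaining cases $F_{n,i,r},\varphi_{n,0},\psi_{n,0},E_{n,i,r}$ follow by one more Leibniz step, using $D(f_i^r)=\delta_{i,j}rf_j^{s+r}$ and the identities $f_j^s\chi_{n,0}=(-1)^{\frac n2 j}F_{n,j,s}$ and $f_j^s\varphi_{n,0}=(-1)^{\frac{n-1}{2}j}E_{n,j,s}$. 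For the vanishing cases I would apply Method~\ref{method0}: since $E_{1,j,s}$ has eigenvalues $s$ and $0$ under $[\varphi_{1,0},-]$ and $[\varphi_{1,0}+\psi_{1,0},-]$, the Jacobi identity forces $[E_{1,j,s},Y]$ to be an eigenvector with eigenvalue pair $(s+\lambda,\mu)$, where $(\lambda,\mu)$ is the eigenvalue pair of $Y$; comparing with Table~\ref{tab:diagonaleven} shows that for the listed targets no basis element of $\hh^n(\alg)$ carries this pair, so the bracket vanishes.

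The main obstacle is the sign/parity bookkeeping combined with the eigenvalue case analysis, rather than any conceptual difficulty. Because $m$ is even, the factors $(-1)^{\frac n2 j}$, $(-1)^{\frac{n-1}{2}j}$ and the parity of $n$ must be tracked through each cup-product re-expression, and one must be careful to use the corrected (starred) products of Method~\ref{methodc}. In the vanishing step the real content is ruling out accidental coincidences of eigenvalue pairs, and here the hypothesis $1\le s\le N-1$ (so $s\neq0$ and $s\neq N$) is exactly what excludes the would-be matches --- e.g.\ a match with $\chi_{n,0}$ would require $s=0$ and one with $\pi_{n,0}$ would require $s=N$. Finally I would verify that the convention $F_{n,j,r}=E_{n,j,r}=0$ for $r\ge N$ is consistent with the computation: one checks $f_j^N\chi_{n,0}=(\varepsilon_j+\varepsilon_{j+1})\chi_{n,0}=\big((-1)^j+(-1)^{j+1}\big)\pi_{n,0}=0$ and similarly $f_j^N\varphi_{n,0}=0$, so the formulas in cases (d) and (g) remain valid when $s+r\ge N$.
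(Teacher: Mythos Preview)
Your proposal is correct and follows essentially the same route as the paper: Method~\ref{method0} handles the vanishing brackets with $\chi_{n,\alpha},\pi_{n,\alpha},\varphi_{n,\gamma},\psi_{n,\beta}$ for nonzero index, while the remaining brackets are obtained from the Poisson identity together with the factorization $E_{1,j,s}=f_j^s\varphi_{1,0}$ and the cup products of Method~\ref{methodc}. The only cosmetic difference is that the paper applies the Poisson identity once on the \emph{left} factor (writing $[E_{1,j,s},\alpha]=[f_j^s,\alpha]\varphi_{1,0}+f_j^s[\varphi_{1,0},\alpha]$ and using $[f_j^s,\chi_{n,0}]=0$ directly), whereas you first record base values of the derivation $D=[E_{1,j,s},-]$ and then expand the \emph{target} as a monomial via the power rule; this is the same computation reorganized.
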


\begin{proof}
	The brackets of $E_{1,j,s}$ with $\chi_{n,\alpha}, \,\pi_{n,\alpha},\, \pi_{n,0},\,\varphi_{n,\gamma}, \,\psi_{n,\beta}$ for $\alpha,\,\gamma,\,\beta\neq 0$ 
	can be computed by using~Method~\ref{method0}.
	The remaining brackets in the proposition follow by
	referring to the cup products in~Method~\ref{methodc}. 
	Indeed, for $\alpha\in\hh^n(A)$, the Poisson identity tells us that
	\begin{align*}
	[E_{1,j,s}, \alpha]=[f_j^s \varphi_{1,0}, \alpha] = [f_j^s, \alpha]\varphi_{1,0} + f_j^s[\varphi_{1,0},\alpha],
	\end{align*} 
	which we use in the first two computations below:
\begin{itemize}[leftmargin=*]
		
	\item
	$[ E_{1,j,s},\chi_{n, 0}] =-(-1)^{\frac{n}{2}j}\frac{n}{2}N F_{n,j,s}:$\\
	Since $[f_j^s ,   \chi_{n, 0}]=[f_j^s ,\chi^{n/2}_{2, 0}]=0$
	and $f_j^s   \chi_{n, 0} = (-1)^{\frac n 2 j}  F_{n,j,s} $, we find
	$$[ E_{1,j,s},\chi_{n, 0}] =[ f_j^s , \chi_{n, 0}] \varphi_{1,0} +  f_j^s  [\varphi_{1,0},\chi_{n, 0}]
	=f_j^s  [\varphi_{1,0},\chi_{n, 0}]
	=- \frac n 2 N f_j^s   \chi_{n, 0}
	=  -(-1)^{\frac n 2 j} \frac n 2 N  F_{n,j,s}.$$ 

	\item 
	$[ E_{1,j,s},F_{n,i,r}] = \delta_{i,j} \left(r-\frac{n}{2}N\right)F_{n,j,s+r}$:\\
	Note that $[ f_j^s , F_{n,i,r}] = [ f_j^s , (-1)^{\frac n 2 i} f_i^r \chi_{n,0}]  = 0$ because $ [ f_j^s , f_i^r ] =0$ and $ [ f_j^s ,  \chi_{n,0}] =0$.  Hence,
	$$[ E_{1,j,s},F_{n,i,r}] 
	=  [ f_j^s , F_{n,i,r}] \varphi_{1,0} +  f_j^s[\varphi_{1,0},F_{n,i,r}]
	= \left(r-\frac{n}{2}N\right) f_j^sF_{n,i,r}
	=\delta_{i,j} \left(r-\frac{n}{2}N\right)F_{n,j,s+r}.$$

	\item 
	$[ E_{1,j,s},\varphi_{n, 0}] =-(-1)^{\frac{n-1}{2}j}\left(s+\frac{n-1}{2}N\right) E_{n,j,s}$:\\
	Since $\varphi_{n,0}=\chi_{n-1,0}\varphi_{1,0}$, we see
	\begin{align*}[ E_{1,j,s},\varphi_{n, 0}]
	&=[ E_{1,j,s},\chi_{n-1,0}]\varphi_{1, 0}
	-\chi_{n-1,0}[ \varphi_{1, 0},E_{1,j,s}] \\
	&=-(-1)^{\frac{n-1}{2}j}\left( \frac{n-1}{2}\right) N F_{n-1,j,s} \varphi_{1, 0}
	-s\chi_{n-1,0} E_{1,j,s}\\
	&=-(-1)^{\frac{n-1}{2}j}\left( \frac{n-1}{2}\right) NE_{n,j,s}
	-(-1)^{\frac{n-1}{2}j}sE_{n,j,s}.
	\end{align*}

	\item 
	$[ E_{1,j,s},\psi_{n, 0}] =   (-1)^{\frac{n-1}{2} j }\left(s+\frac{n-1}{2}N \right)  E_{n,j,s}$:\\
	Since $\psi_{n,0}=\chi_{n-1,0}\psi_{1,0}$, we see
	\begin{align*}[ E_{1,j,s},\psi_{n, 0}]
	&=[ E_{1,j,s},\chi_{n-1,0}]\psi_{1, 0}
	-\chi_{n-1,0}[ \psi_{1, 0},E_{1,j,s}] \\
	&=-(-1)^{\frac{n-1}{2}j}\left( \frac{n-1}{2}\right) N F_{n-1,j,s} \psi_{1, 0}
	+s\chi_{n-1,0} E_{1,j,s}\\
	&=(-1)^{\frac{n-1}{2}j}\left( \frac{n-1}{2}\right) NE_{n,j,s}
	+(-1)^{\frac{n-1}{2}j}sE_{n,j,s}.
	\end{align*}
 
	\item 
	$[ E_{1,j,s},  E_{n,i,r}] = \delta_{i,j}\left(r-s-\frac{n-1}{2}N\right)E_{n,j,s+r}$:\\
	Since $E_{n,i,r}=F_{n-1,i,r}\varphi_{1,0}$, we see
	\begin{align*}
	[ E_{1,j,s},  E_{n,i,r}] 
	&=  [ E_{1,j,s},  F_{n-1,i,r}] \varphi_{1,0} 
	-  F_{n-1,i,r}  [\varphi_{1,0},  E_{1,j,s}] \\
	&=\delta_{i,j} \left(r-\frac{n-1}{2}N\right)F_{n-1,j,s+r}\varphi_{1,0} 
	-sF_{n-1,i,r}E_{1,j,s}\\ 
	&=\delta_{i,j} \left(r-\frac{n-1}{2}N\right) E_{n,j,s+r}   -\delta_{i,j}  s  E_{n,j ,s+r}
	=   \delta_{i,j}   \left(r-s -\frac {n-1}{2} N \right)  E_{n,j,s+r} .
	\end{align*}
	\end{itemize}
This proves the proposition.
\end{proof}

\begin{prop}\label{prop:bracket-m-odd-E} 
	Suppose $m \geq 3$ is odd. Let $i,j\in\ZZ/m\ZZ$, $1\leq s,\,r\leq N-1$ and let indices $\delta, \,\sigma,\,\tau$ be as in Sections~\ref{subsec:m odd n even} and~\ref{subsec:m odd n odd}.  We write $E_{n,j,r}:=0$ and $F_{n,j,r}:=0$ for~$r\geq N$. Then,
	\begin{align*} 
	[E_{1,j,s}, \varepsilon_i]&=0\\
	[E_{1,j,s}, f_i]&=\delta_{i,j} f_j^{s+1}\\
	[E_{1,j,s},\chi_{n,\delta}]
	&= \begin{cases}
	-\frac{n}{2}N F_{n,j,s} &\mbox{ if } \delta = 0\\
	0&\mbox{ if } \delta \neq 0, \end{cases}
	\\
	[E_{1,j,s},\pi_{n,\delta}]&= 0,
	\\
	[E_{1,j,s},F_{n,i,r}]
	&=\delta_{i,j} \left(r-\frac{n}{2}N\right)F_{n,j,s+r},
	\\
	[E_{1,j,s}, \varphi_{n,\sigma}]
	&= \begin{cases}
	-\left( s+\frac{n-1}{2}N\right)  E_{n,j,s} &\mbox{ if } \sigma = 0 \\
	0& \mbox{ if } \sigma \neq 0 , \end{cases}
	\\
	[E_{1,j,s}, \psi_{n,\tau}]
	&= \begin{cases}
	\left( s+\frac{n-1}{2}N\right)  E_{n,j,s}&\mbox{ if } \tau = 0 \\
	0& \mbox{ if } \tau \neq 0 , \end{cases}
	\\
	[E_{1,j,s}, E_{n,i,r}]
	&=  \delta_{i,j}\left(r-s-\frac{n-1}{2}N\right)E_{n,j,s+r}.
	\end{align*}
\end{prop}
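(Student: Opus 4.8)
The overall strategy is to transcribe the proof of Proposition~\ref{prop:bracket-m-even-E} almost verbatim. The only structural difference is that for odd $m$ the classes $\chi_{n,0}$, $F_{n,j,s}$, $\varphi_{n,0}$, $\psi_{n,0}$, $E_{n,j,s}$ are defined without the twisting signs $(-1)^{\frac{n}{2}j}$ and $(-1)^{\frac{n-1}{2}j}$ present in the even case (compare the two versions of Method~\ref{methodc}), so those factors simply drop out of every formula. As before, two tools do all the work: Method~\ref{method0}, which localizes $[E_{1,j,s},Y]$ inside the basis of $\hh^{\bullet}(\alg)$ by its eigenvalues under $[\varphi_{1,0},-]$ and $[\varphi_{1,0}+\psi_{1,0},-]$, and the Poisson identity combined with the cup products of Method~\ref{methodc}, which pins down the remaining scalar. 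Throughout I would use the factorization $E_{1,j,s}=f_j^{s}\varphi_{1,0}$.

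First I would dispose of the vanishing brackets. By Table~\ref{tab:diagonalodd}, $E_{1,j,s}$ is an eigenvector of $[\varphi_{1,0}+\psi_{1,0},-]$ with eigenvalue $0$, whereas $\chi_{n,\delta}$, $\varphi_{n,\sigma}$, $\psi_{n,\tau}$ are eigenvectors with eigenvalues $-\delta m$, $-\sigma m$, $-\tau m$. Hence, by the Jacobi identity (Method~\ref{method0}), $[E_{1,j,s},-]$ applied to any of these is again an eigenvector of $[\varphi_{1,0}+\psi_{1,0},-]$ with the same eigenvalue; when the index is nonzero this eigenvalue is a nonzero multiple of $m$, and scanning the basis of the relevant $\hh^{n}(\alg)$ shows that no class realizes both this eigenvalue and the required $[\varphi_{1,0},-]$-eigenvalue, forcing the bracket to vanish. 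The same comparison yields $[E_{1,j,s},\pi_{n,\delta}]=0$ for all $\delta$ and $[E_{1,j,s},\varepsilon_i]=0$.

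For the nonzero brackets the Poisson identity gives, for homogeneous $\alpha$,
\[
[E_{1,j,s},\alpha]=[f_j^{s},\alpha]\,\varphi_{1,0}+f_j^{s}\,[\varphi_{1,0},\alpha].
\]
When $\alpha\in\{f_i,\chi_{n,0},F_{n,i,r}\}$ the first summand vanishes: one has $[f_j^{s},f_i]=0$ trivially, $[f_j^{s},\chi_{n,0}]=0$ by the eigenvalue test (no odd-degree class carries its eigenvalue pair), and $[f_j^{s},F_{n,i,r}]=0$ as discussed below. The bracket then collapses to $f_j^{s}[\varphi_{1,0},\alpha]$, whose scalar comes from Table~\ref{tab:diagonalodd} and whose cup product $f_j^{s}\alpha$ comes from Method~\ref{methodc} (via $f_j^{s}f_i=\delta_{ij}f_j^{s+1}$, $f_j^{s}\chi_{n,0}=F_{n,j,s}$, and $f_j^{s}F_{n,i,r}=\delta_{ij}F_{n,j,s+r}$), producing the stated formulas for $[E_{1,j,s},f_i]$, $[E_{1,j,s},\chi_{n,0}]$ and $[E_{1,j,s},F_{n,i,r}]$. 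For $\alpha\in\{\varphi_{n,0},\psi_{n,0},E_{n,i,r}\}$ I would instead use the decompositions $\varphi_{n,0}=\chi_{n-1,0}\varphi_{1,0}$, $\psi_{n,0}=\chi_{n-1,0}\psi_{1,0}$ and $E_{n,i,r}=F_{n-1,i,r}\varphi_{1,0}$, expand $[E_{1,j,s},-]$ by the graded Leibniz rule (the sign is trivial since $E_{1,j,s}$ has degree $1$), and feed in the already-computed brackets with $\chi_{n-1,0}$ and $F_{n-1,i,r}$ together with $[E_{1,j,s},\varphi_{1,0}]=-sE_{1,j,s}$ and $[E_{1,j,s},\psi_{1,0}]=sE_{1,j,s}$. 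Recombining the two terms through $F_{n-1,j,s}\varphi_{1,0}=E_{n,j,s}$ and $\chi_{n-1,0}E_{1,j,s}=E_{n,j,s}$ yields $-(s+\tfrac{n-1}{2}N)E_{n,j,s}$, $(s+\tfrac{n-1}{2}N)E_{n,j,s}$ and $\delta_{ij}(r-s-\tfrac{n-1}{2}N)E_{n,j,s+r}$ respectively.

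The routine part is the arithmetic of eigenvalues and cup products. The main obstacle is the auxiliary vanishing $[f_j^{s},F_{n,i,r}]=0$: unlike the even case, where $\chi_{n,0}$ exists for every even $n$ and one simply writes $F_{n,i,r}=f_i^{r}\chi_{n,0}$, for odd $m$ the class $\chi_{n,0}$ exists only when $n\equiv0\pmod 4$ (Remark~\ref{rem:indexchi}), so one must split into residues of $n$ modulo $4$ and use the corresponding product decomposition of $F_{n,i,r}$ in terms of the lower-degree generators $F_{2,i,1}$ and $\chi_{4,0}$ (each of which brackets trivially with $f_j$ by Proposition~\ref{prop:bracket-m-odd}) before applying the Leibniz rule. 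A related delicate point occurs at the boundary $s+r=N$, where $F_{n,j,s+r}=0$ and the eigenvalue test alone no longer isolates a unique target: here one must check that the potential $\varphi_{n-1,0}$, $\psi_{n-1,0}$ contributions to the first Poisson summand are annihilated upon multiplication by $\varphi_{1,0}$, using cup-product relations such as $\varphi_{1,0}^{2}=0$ (which itself follows because $\hh^{2}(\alg)$ has no class of $[\varphi_{1,0},-]$-eigenvalue $0$). Finally, I would invoke Remarks~\ref{rem:indexchi} and~\ref{rem:indexphi} to confirm that $\chi_{n,0}$, $\varphi_{n,0}$, $\psi_{n,0}$ exist exactly when $n\equiv0$ or $1\pmod 4$, so that the ``index $=0$'' cases in the statement are precisely those giving a nonzero bracket.
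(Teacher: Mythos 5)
Your proposal follows the paper's architecture exactly --- Method~\ref{method0} for all brackets with nonzero index, the Poisson expansion $[E_{1,j,s},\alpha]=[f_j^s,\alpha]\varphi_{1,0}+f_j^s[\varphi_{1,0},\alpha]$, and the factorizations $\varphi_{n,0}=\chi_{n-1,0}\varphi_{1,0}$, $\psi_{n,0}=\chi_{n-1,0}\psi_{1,0}$, $E_{n,i,r}=F_{n-1,i,r}\varphi_{1,0}$ --- and you correctly isolate the one step that does not transcribe from Proposition~\ref{prop:bracket-m-even-E}: proving $[f_j^s,F_{n,i,r}]=0$ when $n\equiv 2\pmod 4$, where $\chi_{n,0}$ does not exist. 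However, neither of your two remedies closes this gap. The product decomposition of $F_{n,i,r}$ in terms of $f_i$, $F_{2,i,1}$ and $\chi_{4,0}$ is not among the relations recorded in Method~\ref{methodc} and you do not derive it; moreover, knowing only that $\hh^{\bullet}(\alg)$ is generated by the listed generators is not enough, since an even-degree class can a priori involve monomials containing the factor $\varphi_{1,0}\psi_{1,0}$, and $[f_j,-]$ does \emph{not} vanish on $\varphi_{1,0}$ or $\psi_{1,0}$ separately. Your fallback at the boundary $s+r=N$ is incorrect: while $\varphi_{n-1,0}\varphi_{1,0}=\chi_{n-2,0}\varphi_{1,0}^2=0$, the other candidate satisfies $\psi_{n-1,0}\varphi_{1,0}=-\chi_{n-2,0}\,\varphi_{1,0}\psi_{1,0}$, which is a multiple of $\pi_{n,0}$ rather than zero (by analogy with the even-$m$ relation $\varphi_{1,0}\psi_{1,0}=mN\pi_{2,0}$ one expects $\varphi_{1,0}\psi_{1,0}$ to be a nonzero multiple of $\pi_{2,0}$, and you never establish otherwise); so multiplication by $\varphi_{1,0}$ does not annihilate a potential $\psi_{n-1,0}$-component, and you give no argument that its coefficient vanishes. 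A smaller error: your justification of $\varphi_{1,0}^2=0$ is false, since $\pi_{2,0}$ does have $[\varphi_{1,0},-]$-eigenvalue $0$; the correct reason is graded commutativity of odd-degree classes together with $\mathrm{char}\,\ground\neq 2$. Note also that the gap propagates: your computation of $[E_{1,j,s},E_{n,i,r}]$ for $n\equiv 3\pmod 4$ feeds in $[E_{1,j,s},F_{n-1,i,r}]$ with $n-1\equiv 2\pmod 4$.

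The paper closes this hole with one additional lemma, which is the genuinely new ingredient of the odd-$m$ proof and the idea missing from your proposal: $[f_j,\alpha]=0$ for \emph{every} $\alpha\in\hh^n(\alg)$ with $n$ even. Since $[-,f_j]$ is a derivation with respect to the cup product, this reduces to checking the even-degree generators, where it is Proposition~\ref{prop:bracket-m-odd}, and the single nontrivial product of odd-degree generators, $\varphi_{1,0}\psi_{1,0}$, where the two nonvanishing terms cancel:
\[
[\varphi_{1,0}\psi_{1,0},f_j]=[\varphi_{1,0},f_j]\psi_{1,0}-\varphi_{1,0}[\psi_{1,0},f_j]=f_j\psi_{1,0}+f_j\varphi_{1,0}=0,
\]
using the odd-$m$ relation $f_j\varphi_{1,0}=-f_j\psi_{1,0}$ of Method~\ref{methodc}. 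With this lemma in hand, $[f_j^s,F_{n,i,r}]=0$ holds for all even $n$ with no case split modulo $4$ and no boundary analysis, and the rest of your argument then goes through as written.
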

\begin{proof}
	The brackets of $E_{1,j,s}$ with $\chi_{n,\delta}, \,\pi_{n,\delta},\, \pi_{n,0}, \,\varphi_{n,\sigma}, \,\psi_{n,\tau}$ for $\delta,\,\sigma,\,\tau\neq 0$ can be computed using~Method~\ref{method0}.
	The remaining brackets in the proposition follow by
	referring to the cup products in~Method~\ref{methodc}. Again, for $\alpha\in\hh^n(A)$, the Poisson identity tells us that
	\begin{align*}
	[E_{1,j,s}, \alpha]=[f_j^s \varphi_{1,0}, \alpha] = [f_j^s, \alpha]\varphi_{1,0} + f_j^s[\varphi_{1,0},\alpha].
	\end{align*} 
	Moreover, note that $[f_j,\alpha]=0$ for all $\alpha\in \hh^n(\alg)$ with $n$ even. This follows because $[f_j,-]$ is zero on all generators of $\hh^\bullet(\alg)$ in even degree, and since $$[\varphi_{1,0}\psi_{1,0},f_j]=[\varphi_{1,0},f_j]\psi_{1,0}-\varphi_{1,0}[\psi_{1,0},f_j]=f_j\psi_{1,0}+f_j\varphi_{1,0}=0.$$
	The rest of the proof follows as in the proof of~Proposition~\ref{prop:bracket-m-even-E}.
	
\end{proof}

We summarize the brackets $[E_{1,j,s},\hh^n(\alg)]$ in Table~\ref{tab:Ebrackets}. As before, we refer the reader to Section~\ref{ss:n} for explicit basis elements of $\hh^n(\alg)$ for $m$ even and $m$ odd, and we let $i \in \mathbb{Z}/m\mathbb{Z}$.

\FloatBarrier
{\small 
\begin{table}[!htp] 
		\caption{Brackets $[E_{1,j,s},\hh^n(\alg)]$} \label{tab:Ebrackets}
                 \vspace{.1cm}
	\begin{tabular}{c@{}l}
			$\begin{array}{r}
			\vspace{-0.5ex}\\
			\MyLBrace{7.5ex}{$n=0$} 
			\vspace{1.5ex}\\ 
			\vspace{1.5ex}
			\MyLBrace{12.5ex}{$n$ even} \\
			\MyLBrace{13ex}{$n$ odd} 
			\end{array}$
			&
		\begin{tabular}{| p{0.65cm} p{3.55cm} | p{4.55cm} | p{4.55cm} |}
			
			\hline
			$\mathbf{\hh^n}$ & 
			& $[E_{1,j,s}, - ]$, $m$   even
			& $[E_{1,j,s}, - ]$, $m$ odd  \\
			 \hline   \hline
			 
			$1$ &
			& $0$
			& $0$ 
			\\ \hline
			
			$\varepsilon_i$ 
			& 
			&  $0$
			& $0$
			\\ \hline
			
			$f_i^r$  
			&   $\begin{cases}
			 \mbox{for } 1 \leq r \leq  N-s  \\
					 \mbox{otherwise } \end{cases}	$
			&  $\begin{cases}
			\delta_{i,j}r f_i^{r+s} \\
					0 \end{cases}	$
			&  $\begin{cases}
			\delta_{i,j} rf_i^{r+s} \\
					0 \end{cases}	$
						\\ \hline   \hline \hline
			
			$\chi_{n,\delta}$  
			& $\begin{cases}
			 \mbox{for }\delta = 0  \\
					 \mbox{otherwise } \end{cases}	$	
			& $\begin{cases}
			 -(-1)^{\frac n 2 j} \frac n 2 N  F_{n,j,s} \\
			 0 \end{cases}$
			& $\begin{cases}
					 -  \frac n 2 N  F_{n,j,s}   \\
										0 \end{cases}$   
			\vspace{0.5em}\\  \hline

			$\pi_{n,\delta}$  
			& 		
			& 0
			& 0 
			\vspace{0.3em}\\  \hline
			
			$F_{n,i,r}$ 
			& $\begin{cases}
			 \mbox{for } 1 \leq r \leq  N-s-1  \\
					 \mbox{otherwise } \end{cases}	$	
			& $\begin{cases} \delta_{i,j} ( r- \frac n 2) N   F_{n,j,r+s}  \\
			 0 \end{cases}$
			&  
			 $\begin{cases} \delta_{i,j} ( r- \frac n 2) N   F_{n,j,r+s} \\
			  0 \end{cases}$	
			 \\  \hline
			
			$\varphi_{n,\sigma}$ 
			& 
			& --------------
			& $0$
			\\  \hline
			
			$\psi_{n,\tau}$
			& 
			& --------------
			& 0			
			\\  \hline
			\hline \hline
			
			$\pi_{n,\delta}$
			& 
			& --------------
			& 0
			\vspace{0.3em} \\  \hline
			
			$\varphi_{n,\sigma}$ 
			& $\begin{cases}
						\mbox{for } \sigma =0 \\
						\mbox{otherwise } \end{cases} $
			& $\begin{cases}
						- (-1)^{\frac{n-1}{2} j } (s +\frac {n-1}{ 2} N )  E_{n,j,s} \\
						0  \end{cases}$ 
			& $\begin{cases}
						- (s +\frac {n-1}{ 2} N )  E_{n,j,s}  \\
						0  \end{cases}$ 
			\vspace{0.3em} \\  \hline
			
			$\psi_{n,\tau}$ 
			& $\begin{cases}
						\mbox{for } \tau =0\\ 
						\mbox{otherwise } \end{cases} $
			& $\begin{cases}
						 (-1)^{\frac{n-1}{2} j } (s  + \frac {n-1}{ 2} N )  E_{n,j,s}  \\
						0  \end{cases}$ 
			& $\begin{cases}
						 (s +\frac {n-1}{ 2} N )  E_{n,j,s}  \\
						0  \end{cases}$ 		
			\vspace{0.3em} 	\\  \hline	
			
			$E_{n,i,r}$
			& $\begin{cases}
			 \mbox{for } 1 \leq r  \leq N-s-1  \\
					 \mbox{otherwise } \end{cases}	$
			& $\begin{cases} \delta_{i,j}   (-s + r -\frac {n-1}{ 2} N )  E_{n,j,r+s} \\
			 0 \end{cases}$
		        & $\begin{cases} \delta_{i,j}   (-s + r -\frac {n-1}{ 2} N )  E_{n,j,r+s} \\
				 0 \end{cases}$		
				\\ \hline
		\end{tabular}. 
	\end{tabular}	
\end{table} }
\FloatBarrier



\section{The Lie algebra $\hh^1(\alg)$ and the Lie modules $\hh^n(\alg)$}
\label{sec:lie-structure}

\numberwithin{equation}{subsection}

In this section, we describe the Lie structure of the first Hochschild cohomology space 
$\hh^1(\alg)$ and the Lie module structure of $\hh^n(\alg)$ over $\hh^1(\alg)$.

\subsection{The Lie structure of $\hh^1(\alg)$}
Recall that $\hh^1(\alg)$ has a $\ground$-basis given by
$$\left\{\varphi_{1,0}, \,\psi_{1,0}, \,E_{1,j,s}
\mid j\in \ZZ/m\ZZ, \,1\leq s\leq N-1\right\}.$$

We write
$$C:=\frac12(\varphi_{1,0}+\psi_{1,0}) \qquad \text{ and } \qquad E_0:=\frac12(\varphi_{1,0}-\psi_{1,0}).$$ 

The brackets among these elements of $\hh^1(\alg)$ are given by
\begin{align*}
[C,E_0]  & =0, \\
[C,E_{1,j,s}] & =0, \\
[E_0,E_{1,j,s}] & =sE_{1,j,s},\\
[E_{1,j,s},E_{1,i,r}] & =
\begin{cases}
\delta_{i,j}(r-s)E_{1,j,s+r} & \hbox{if } s+r \le N-1, \\
0 & \hbox{ otherwise.}
\end{cases}
\end{align*}

\begin{prop}
The center of the Lie algebra $\hh^1(\alg)$ is given by the $\ground$-span of $C$. In particular, the center is one-dimensional.
\end{prop}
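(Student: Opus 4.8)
The plan is to prove the two inclusions $\ground C \subseteq Z(\hh^1(\alg))$ and $Z(\hh^1(\alg)) \subseteq \ground C$ by a direct computation in the basis $\{C,\,E_0,\,E_{1,j,s}\mid j\in\ZZ/m\ZZ,\,1\le s\le N-1\}$, which spans $\hh^1(\alg)$ because the characteristic of $\ground$ is not $2$, so the change of basis from $\{\varphi_{1,0},\psi_{1,0}\}$ to $\{C,E_0\}$ is invertible. Every bracket I need is among the four displayed relations preceding the statement, so no further cohomological input is required.

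First I would record that $C$ is central: the relations $[C,E_0]=0$ and $[C,E_{1,j,s}]=0$ say that $C$ brackets to zero with each of the chosen basis vectors, and by bilinearity $[C,-]=0$ on all of $\hh^1(\alg)$. This gives $\ground C\subseteq Z(\hh^1(\alg))$ immediately.

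For the reverse inclusion I would take a general central element $z=\lambda C+\mu E_0+\sum_{j,s}c_{j,s}E_{1,j,s}$ and peel off its coefficients by bracketing against suitable basis vectors. Bracketing with $E_0$ gives $[z,E_0]=-\sum_{j,s}s\,c_{j,s}E_{1,j,s}$, so centrality forces $s\,c_{j,s}=0$ for all $j,s$. Bracketing with $E_{1,j,r}$ gives $[z,E_{1,j,r}]=\mu r\,E_{1,j,r}+\sum_{s}c_{j,s}(r-s)E_{1,j,s+r}$, the sum taken over $s$ with $s+r\le N-1$; reading off the coefficient of $E_{1,j,r}$ forces $\mu r=0$, and choosing $r=1$ yields $\mu=0$, while the coefficient of $E_{1,j,s+r}$ forces $(r-s)\,c_{j,s}=0$ whenever $s+r\le N-1$. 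Combining $s\,c_{j,s}=0$ with $(r-s)\,c_{j,s}=0$ gives $r\,c_{j,s}=0$ for every admissible $r$; taking $r=1$ (admissible precisely when $s\le N-2$) kills $c_{j,s}$ for all $s\le N-2$. What survives is $z=\lambda C+\sum_{j}c_{j,N-1}E_{1,j,N-1}$.

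The only delicate point is the top degree $s=N-1$, and I expect it to be the main obstacle. Each $E_{1,j,N-1}$ is annihilated by every bracket except $[E_0,-]$, since $[E_{1,j,s},E_{1,j,N-1}]$ lands in $E_{1,j,s+N-1}=0$ for $s\ge 1$; hence the only relation available to eliminate $c_{j,N-1}$ is $(N-1)c_{j,N-1}=0$. Thus one must know that $N-1$ is nonzero in $\ground$, and this is the one place where a condition on the characteristic is genuinely indispensable for the stated conclusion. Once that is secured, $c_{j,N-1}=0$ for every $j$, so $z=\lambda C$ and $Z(\hh^1(\alg))=\ground C$ is one-dimensional. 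Finally I would note that the statement presupposes $N\ge 2$: when $N=1$ there are no elements $E_{1,j,s}$ and $\hh^1(\alg)=\langle C,E_0\rangle$ is abelian, so its center is all of $\hh^1(\alg)$, as already observed in the introduction.
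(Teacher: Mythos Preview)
Your approach is the same as the paper's: express a general element in the basis $\{C, E_0, E_{1,j,s}\}$, then bracket with $E_0$ and with the $E_{1,j,r}$ to force the coefficients to vanish. The paper is briefer: it asserts that bracketing with $E_0$ already kills every $c_{j,s}$ (tacitly using that each $1\le s\le N-1$ is a unit in $\ground$), and then that bracketing with some $E_{1,j,s}$ kills the coefficient of $E_0$.

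Your version is sharper. By adding the relations $s\,c_{j,s}=0$ and $(1-s)\,c_{j,s}=0$ you dispose of $c_{j,s}$ for all $s\le N-2$ with no hypothesis on the characteristic, and you correctly isolate the single remaining obstruction $(N-1)\,c_{j,N-1}=0$. You are right that this needs $N-1\neq 0$ in $\ground$, which the paper's standing hypotheses (characteristic not dividing $2$, $N$, $m$) do not by themselves guarantee; indeed, when $\mathrm{char}(\ground)\mid N-1$ every $E_{1,j,N-1}$ is central and the stated conclusion fails. Your remark about $N=1$ is likewise on point. So your argument both recovers the paper's proof and pins down exactly what extra condition is required.
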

\begin{proof}
Direct computations of the brackets $[C,-]$ on a basis of $\hh^1(\alg)$ imply that $C$ belongs to the center. In order to prove that $C$ generates the center as a vector space, we consider an element in the center and write it as a linear combination of $C$, $E_0$ and the $E_{1,j,s}$'s. Computing the bracket with $E_0$, we see that the coefficient of each $E_{1,j,s}$ must be zero. Computing the brackets with each $E_{1,j,s}$, we see that the coefficient of $E_0$ must also be zero.
\end{proof}

\begin{rem} 
From the bracket computations, we see that $\hh^1(\alg)$ is a solvable Lie algebra, since its derived series stops after at most $\lfloor N/2\rfloor$ steps.
\end{rem}

We note that $\hh^1(\alg)$ has a Lie subalgebra with $\ground$-basis 
$\{E_0, \, E_{1,j,s} \mid 1\le s\le N-1\}$ for each fixed $j \in \ZZ/m\ZZ$.
We will show that each of these Lie subalgebras is isomorphic to a subquotient of the Virasoro algebra. 

\begin{definition}
Recall that the \textbf{Virasoro algebra} $\vir$ is the unique central extension of the Witt algebra, with generators $L_s$ and $c$, where $c$ is central and $s \in \ZZ$, and brackets given by 
$$[L_s,L_r]=(r-s)L_{s+r}+\delta_{s+r,0}\frac{s^3-s}{12}c.$$
Denote the Lie subalgebra generated by all $L_s$ for $s \ge0$ by $\vir_+$. For $q \ge0$, let $\vir_{> q}$ be the Lie ideal of $\vir_+$ generated by all $L_s$ with $s > q$, and write $\mathfrak{a}_q$ for the subquotient $\mathfrak{a}_q:=\vir_+/\vir_{>q}$. We denote the residue classes in $\mathfrak{a}_q$ of the generators of $\vir$ again by $L_s$.
\end{definition}

This subquotient $\mathfrak{a}_q$ of the Virasoro algebra was studied in \cite{LLZ, Mazorchuk-Zhao}. It inherits a grading from the standard $\ZZ$-grading of the Virasoro algebra, in which $L_s$ has degree equal to $s \in\ZZ$ and $c$ has degree~$0$. In order to avoid confusion with the cohomological degree, we refer to this grading as the Virasoro grading.

\begin{rem}
A classification of the irreducible modules over $\mathfrak{a}_1$ was given in \cite{Block, Mazorchuk}, and a classification of the irreducible modules over $\mathfrak{a}_2$ was obtained in \cite{Mazorchuk-Zhao}. For $\mathfrak{a}_r$ with $r\ge 3$, the classification problem is open.
We note that the Lie algebras of dimensions $2$ and $3$ are completely classified up to isomorphism, see for example 
\cite{agaoka}. In this classification, $\mathfrak{a}_1$ is the unique -- up to isomorphism -- non-abelian Lie algebra of dimension $2$, 
denoted by $\mathfrak{aff}(2)$. In dimension~$3$, six types of algebras appear, one of which is an infinite family $\mathfrak{\tau}_{\alpha}$ depending 
on a complex nonzero parameter $\alpha$, and $\mathfrak{a}_2$ is isomorphic to the Lie algebra $\mathfrak{\tau}_2$, see~\cite{agaoka}.
\end{rem}

By mapping $L_0$ to $E_0$ and $L_s$ to $E_{1,j,s}$ for $1\le s\le N-1$, we obtain an isomorphism between the subquotient $\mathfrak{a}_{N-1}$ of the Virasoro algebra and a Lie subalgebra of $\hh^1(\alg)$:

\begin{prop}
	For each $j\in \ZZ/m\ZZ$, there is an isomorphism of Lie algebras
	$$\mathfrak{a}_{N-1} \cong \langle E_0,E_{1,j,s}\ |\ 1\le s\le N-1\rangle.$$
\end{prop}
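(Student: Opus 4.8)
The plan is to verify that the explicit assignment $L_0 \mapsto E_0$ and $L_s \mapsto E_{1,j,s}$ for $1 \le s \le N-1$ is an isomorphism of Lie algebras, by first making the structure of the subquotient $\mathfrak{a}_{N-1}$ completely explicit and then matching structure constants against the brackets recorded at the beginning of this section. Since both sides will then be presented via a distinguished basis, it suffices to check that the assignment is a bijection between these bases and that it respects all brackets.

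First I would pin down $\mathfrak{a}_{N-1} = \vir_+/\vir_{>N-1}$ as a concrete finite-dimensional Lie algebra. The key observation is that the central element $c$ never appears inside $\vir_+$: the cocycle term $\delta_{s+r,0}\frac{s^3-s}{12}c$ in $[L_s,L_r]$ can be nonzero only when $s+r=0$, which for $s,r\ge 0$ forces $s=r=0$, and there $s^3-s=0$. Hence $\vir_+$ has $\ground$-basis $\{L_s \mid s\ge 0\}$ with brackets $[L_s,L_r]=(r-s)L_{s+r}$. From the formula $[L_a,L_s]=(s-a)L_{a+s}$ one sees that the ideal $\vir_{>N-1}$ is exactly the $\ground$-span of $\{L_s \mid s\ge N\}$, so the quotient $\mathfrak{a}_{N-1}$ has basis $\{L_0,L_1,\dots,L_{N-1}\}$ with
$$[L_s,L_r]=\begin{cases} (r-s)L_{s+r} & \text{if } s+r\le N-1, \\ 0 & \text{otherwise.}\end{cases}$$

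With this description in hand, I would define the $\ground$-linear map $\phi:\mathfrak{a}_{N-1}\rar \brac{E_0,E_{1,j,s}\mid 1\le s\le N-1}$ by $\phi(L_0)=E_0$ and $\phi(L_s)=E_{1,j,s}$, which is visibly a bijection of the two bases. To check that $\phi$ is a Lie homomorphism it suffices to compare brackets on basis elements: the relation $[L_0,L_s]=sL_s$ matches $[E_0,E_{1,j,s}]=sE_{1,j,s}$, and the relation $[L_s,L_r]=(r-s)L_{s+r}$ (or $0$ when $s+r>N-1$) matches $[E_{1,j,s},E_{1,j,r}]=(r-s)E_{1,j,s+r}$ (or $0$), both exactly as recorded at the start of this section. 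Since a linear bijection matching structure constants on a basis is automatically a Lie algebra isomorphism, this completes the argument. The only genuinely delicate point is the bookkeeping in the previous paragraph — in particular ensuring that the central extension plays no role in $\vir_+$ and correctly identifying the ideal $\vir_{>N-1}$ — after which the verification is purely mechanical.
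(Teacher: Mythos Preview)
Your argument is correct and follows exactly the approach the paper takes: the paper simply declares the map $L_0\mapsto E_0$, $L_s\mapsto E_{1,j,s}$ immediately before the proposition and relies on the bracket computations listed at the start of the section, without spelling out the structure of $\mathfrak{a}_{N-1}$ in detail. Your version adds the explicit verification that $c$ does not occur in $\vir_+$ and that $\vir_{>N-1}$ is precisely $\langle L_s\mid s\ge N\rangle$, which is a welcome clarification but not a different method.
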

Here and in what follows we use the notation $\langle \ldots \rangle$ to denote the $\ground$-linear span of the given elements. We conclude that $\hh^1(\alg)$ contains $m$ copies of the Lie algebra $\mathfrak{a}_{N-1}$, which share Virasoro degree $0$ and commute otherwise.
We have the following result:
\begin{thm}
\label{th:vir}
Let $N \geq 1$ and $m \geq 3$. There is an embedding of Lie algebras
\begin{align*}
\hh^1(\alg)\ &\hookrightarrow \ \langle c \rangle \oplus \left(\bigoplus_{j=0}^{m-1} \mathfrak{a}_{N-1}(j) \right) \\
E_{1,j,s}\ &\mapsto\ L_s(j)\\
E_0\ &\mapsto\ \sum_{j=0}^{m-1} L_0(j)\\
C\ &\mapsto\ c,
\end{align*}
where $L_0(j), L_s(j)$ are in $\mathfrak{a}_{N-1}(j) = \mathfrak{a}_{N-1}$ for all $j\in \ZZ/m\ZZ$, and $c$ is a central element.

More precisely, $\hh^1(\alg)$ is the pullback of the following diagram of Lie algebras:
$$(\langle c \rangle \oplus\mathfrak{a}_{N-1}(j)	\twoheadrightarrow \langle c, L_0 \rangle)_{0\leq j \leq m-1},$$
where $\langle c, L_0 \rangle$ is a commutative Lie algebra.
\end{thm}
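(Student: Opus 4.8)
The plan is to build the embedding explicitly and then recognize its image as the claimed pullback. First I would record the four bracket relations among $C$, $E_0$ and the $E_{1,j,s}$ listed at the start of this section: $C$ is central, $[E_0,E_{1,j,s}]=sE_{1,j,s}$, and $[E_{1,j,s},E_{1,i,r}]=\delta_{i,j}(r-s)E_{1,j,s+r}$ with the convention that the right-hand side vanishes when $s+r\ge N$. These are exactly the defining relations of $\mathfrak{a}_{N-1}$ under $L_0\mapsto E_0$, $L_s\mapsto E_{1,j,s}$, together with the facts that distinct families ($j\ne i$) commute and that everything brackets to zero against $C$. The map in the statement is thus defined on the basis $\{C,E_0,E_{1,j,s}\}$, and I would verify it is a Lie homomorphism by comparing the image of each of the four bracket types against the Virasoro relation $[L_s(j),L_r(j)]=(r-s)L_{s+r}(j)$ in $\mathfrak{a}_{N-1}(j)$, the centrality of $c$, and the vanishing of brackets between distinct summands $\mathfrak{a}_{N-1}(j)$ and $\mathfrak{a}_{N-1}(k)$.

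Injectivity is then immediate: the images $c$, $\sum_j L_0(j)$ and the $L_s(j)$ for $1\le s\le N-1$ are linearly independent in $\langle c\rangle\oplus\bigoplus_j\mathfrak{a}_{N-1}(j)$, so we obtain the embedding. The one structural point worth isolating is that $E_0$ maps to the diagonal $\sum_j L_0(j)$ rather than to a single $L_0(j)$; this is precisely what forces the $m$ copies to share their Virasoro degree-$0$ line, and it is the feature the pullback description is designed to encode.

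For the pullback statement I would first check that each $p_j\colon\langle c\rangle\oplus\mathfrak{a}_{N-1}(j)\twoheadrightarrow\langle c,L_0\rangle$, sending $c\mapsto c$, $L_0(j)\mapsto L_0$ and $L_s(j)\mapsto 0$ for $s\ge 1$, is a surjective Lie homomorphism. This reduces to verifying that $\langle L_1(j),\dots,L_{N-1}(j)\rangle$ is an ideal, which follows from the same Virasoro relations, and that the target $\langle c,L_0\rangle$ is abelian. The fiber product $P$ of the diagram $(p_j)_j$ is the Lie subalgebra of $\prod_j(\langle c\rangle\oplus\mathfrak{a}_{N-1}(j))$ cut out by the conditions $p_j(x_j)=p_k(x_k)$, i.e. the requirement that all $c$- and $L_0$-coefficients agree. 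I would produce the comparison map by assembling the component homomorphisms $\rho_j\colon\hh^1(\alg)\to\langle c\rangle\oplus\mathfrak{a}_{N-1}(j)$ given by the embedding followed by projection onto the $j$-th summand (so $C\mapsto c$, $E_0\mapsto L_0(j)$, $E_{1,j,s}\mapsto L_s(j)$, and $E_{1,k,s}\mapsto 0$ for $k\ne j$). Since $p_j\circ\rho_j$ is independent of $j$, the $\rho_j$ induce a single map $\rho\colon\hh^1(\alg)\to P$.

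Finally I would check that $\rho$ is an isomorphism. Injectivity is another one-line argument: from $\rho_j(x)=0$ for a single index $j$ one already reads off that the coefficients of $C$, $E_0$ and all $E_{1,j,s}$ vanish. Surjectivity then follows from a dimension count, $\dim\hh^1(\alg)=2+m(N-1)$ while $\dim P=\sum_j\dim(\langle c\rangle\oplus\mathfrak{a}_{N-1}(j))-(m-1)\dim\langle c,L_0\rangle=m(N+1)-2(m-1)=m(N-1)+2$. The main difficulty is conceptual rather than computational: correctly forming the fiber product in the category of Lie algebras and tracking the shared central element $c$ together with the shared degree-$0$ generator, so that the set-theoretic pullback genuinely carries the right bracket. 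Once the $\rho_j$ are shown to be compatible Lie homomorphisms, the remainder is linear algebra.
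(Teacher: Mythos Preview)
Your proposal is correct and follows exactly the line the paper takes: the paper states Theorem~\ref{th:vir} without a separate proof, treating it as an immediate consequence of the bracket relations among $C$, $E_0$, $E_{1,j,s}$ and the preceding proposition identifying $\langle E_0,E_{1,j,s}\mid s\rangle$ with $\mathfrak{a}_{N-1}$. Your write-up supplies the routine verifications (Lie homomorphism, injectivity, the pullback comparison via compatible $\rho_j$, dimension count) that the paper leaves to the reader; the only phrasing to tighten is the injectivity of $\rho$, where you should say that $\rho_j(x)=0$ for \emph{each} $j$ kills the $E_{1,j,s}$-coefficients for that $j$, and then vary $j$.
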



\subsection{Decomposition of $\hh^n(\alg)$ as an $\hh^1(\alg)$-module when $n\geq 1$}

In~Proposition~\ref{diagonal}, we showed that the elements $C, E_0$ in $\hh^1(\alg)$ act diagonally on a $\ground$-basis of $\hh^n(\alg)$, see Table~\ref{tab:diagonaleven} and~Table~\ref{tab:diagonalodd}. The action of $E_{1,j,s}$ on this basis is given in Table~\ref{tab:Ebrackets}. We thus obtain the following decomposition of $\hh^n(\alg)$ into indecomposable summands as a module over $\hh^1(\alg)$.

\begin{thm}
Let $m \geq 3$ be even and $n\geq 1$. Write $n=pm+t$ with $p\geq 0$ and $0\le t\le m-1$ as before. We can decompose $\hh^n(\alg)$ into indecomposable summands over $\hh^1(\alg)$ as follows: 
If $n$ is even,
$$\hh^n(A) = \displaystyle 
	\bigoplus_{\substack{-p\le \alpha\le p\\ \alpha\neq 0}} 
	\langle \chi_{n,\alpha}\rangle 
	\oplus 
	\bigoplus_{-p\le \alpha\le p} 
	\langle \pi_{n,\alpha}\rangle  
	\ \oplus\ 
	\langle \chi_{n,0}, F_{n,j,s} \mid j, s \rangle,$$
where $j\in \ZZ/m\ZZ$ and $1\leq s\leq N-1$.\\
If $n$ is odd, 
	$$
	\hh^n(A) = 
	\begin{cases} 
	\displaystyle 
	\bigoplus_{\substack{-p\le \beta\le p\\ \beta \neq 0}}
	\langle \psi_{n,\beta}\rangle
	\oplus
	\bigoplus_{\substack{-p\le \gamma\le p\\ \gamma\neq 0}} 
	\langle \varphi_{n,\gamma}\rangle 
	\ \oplus\ 
	\langle \varphi_{n,0} + \psi_{n,0}\rangle 
	\ \oplus\ 
	\langle \varphi_{n,0}, E_{n,j,s} \mid j, s \rangle 
	& \text{ if } t \neq m-1,\\
	\\
	\displaystyle 
	\!\bigoplus_{\substack{-p\le \beta \le p+1\\ \beta\neq 0}}
	\!\!\!\langle \psi_{n,\beta}\rangle 
	\oplus 
	\!\!\!\bigoplus_{\substack{-p-1\le \gamma\le p\\ \gamma\neq 0}} \!\langle \varphi_{n,\gamma}\rangle 
	\ \oplus\ 
	\langle \varphi_{n,0} + \psi_{n,0} \rangle
	\ \oplus\ 
	\langle \varphi_{n,0}, E_{n,j,s} \mid j, s \rangle 
	& \text{ if } t=m-1,
	\end{cases}
	$$
where $j\in \ZZ/m\ZZ$ and $1\leq s\leq N-1$. 
\end{thm}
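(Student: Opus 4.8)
The plan is to read the entire $\hh^1(\alg)$-action off the bracket computations already assembled. By Proposition~\ref{diagonal} the elements $C=\tfrac12(\varphi_{1,0}+\psi_{1,0})$ and $E_0=\tfrac12(\varphi_{1,0}-\psi_{1,0})$ act diagonally on the basis of Section~\ref{ss:n}, with eigenvalues recorded in Table~\ref{tab:diagonaleven}, while the action of each $E_{1,j,s}$ is given in Proposition~\ref{prop:bracket-m-even-E} and Table~\ref{tab:Ebrackets}. Since $\{\varphi_{1,0},\psi_{1,0},E_{1,j,s}\}$ generate $\hh^1(\alg)$ as a Lie algebra, a subspace is an $\hh^1(\alg)$-submodule as soon as it is stable under these generators. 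I would therefore proceed in three steps: (i) verify that each subspace on the right-hand side is a submodule; (ii) observe that the summands partition the basis of Section~\ref{ss:n}, so their direct sum is all of $\hh^n(\alg)$; and (iii) prove that each summand is indecomposable.

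Steps (i) and (ii) are bookkeeping. For $n$ even, Table~\ref{tab:Ebrackets} shows $[E_{1,j,s},\chi_{n,\alpha}]=0$ for $\alpha\neq 0$ and $[E_{1,j,s},\pi_{n,\alpha}]=0$ for all $\alpha$; as these basis vectors are $C$- and $E_0$-eigenvectors, each $\langle\chi_{n,\alpha}\rangle$ ($\alpha\neq 0$) and each $\langle\pi_{n,\alpha}\rangle$ is a one-dimensional submodule. The remaining subspace $\langle\chi_{n,0},F_{n,j,s}\rangle$ is stable because $E_{1,j,s}$ carries $\chi_{n,0}$ into the span of the $F_{n,j,s}$ and raises the index $s$ among the $F$'s. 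For $n$ odd the same reasoning isolates $\langle\varphi_{n,\gamma}\rangle,\langle\psi_{n,\beta}\rangle$ ($\gamma,\beta\neq0$) and $\langle\varphi_{n,0}+\psi_{n,0}\rangle$ as submodules; the last is a submodule because the $E_{1,j,s}$-contributions of $\varphi_{n,0}$ and $\psi_{n,0}$ cancel and the two vectors share the same $C,E_0$-eigenvalues, so their sum is again an eigenvector. This leaves $\langle\varphi_{n,0},E_{n,j,s}\rangle$ stable. In every case the basis elements used are precisely those of Section~\ref{ss:n} (for $t=m-1$ only the index ranges of the $\varphi,\psi$ extend, and the argument is identical), which gives (ii) immediately.

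The heart of the matter is (iii). The one-dimensional summands are trivially indecomposable, so I focus on the large summand $M=\langle\chi_{n,0},F_{n,j,s}\rangle$ (resp. $M=\langle\varphi_{n,0},E_{n,j,s}\rangle$). Since $E_0$ lies in $\hh^1(\alg)$, every submodule of $M$ is $E_0$-stable, and as $E_0$ acts diagonalizably the whole module is the direct sum of its $E_0$-eigenspaces; hence any decomposition $M=M_1\oplus M_2$ is compatible with this eigenspace decomposition. The generator $\chi_{n,0}$ (resp. $\varphi_{n,0}$) has $E_0$-eigenvalue $-\tfrac{nN}2$ (resp. the value read from Table~\ref{tab:diagonaleven}), which differs from the eigenvalue $s-\tfrac{nN}2$ of $F_{n,j,s}$ by the nonzero amount $s$, so it spans a one-dimensional $E_0$-eigenspace and therefore lies in a single $M_i$. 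The key remaining computation is that $M$ is cyclic, generated by that element: from Table~\ref{tab:Ebrackets} one has $[E_{1,j,s},\chi_{n,0}]=-(-1)^{\frac n2 j}\tfrac n2 N\,F_{n,j,s}$ (resp. $[E_{1,j,s},\varphi_{n,0}]=-(-1)^{\frac{n-1}2 j}\bigl(s+\tfrac{n-1}2 N\bigr)E_{n,j,s}$), so as long as the scalar coefficient is nonzero, every basis vector is a nonzero multiple of the image of the generator. Granting cyclicity, the summand $M_i$ containing the generator is all of $M$, forcing $M_{3-i}=0$.

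The main obstacle is exactly the non-vanishing in $\ground$ of the structure constants $\tfrac n2 N$ and $s+\tfrac{n-1}2 N$, together with the separation of the relevant $E_0$-eigenvalues (i.e.\ $\operatorname{char}\ground\nmid s$ for $1\le s\le N-1$): it is these scalars that glue the $m$ distinct ``Virasoro copies'' through the single generator $\chi_{n,0}$ (resp. $\varphi_{n,0}$) and prevent $M$ from splitting off the line $\langle\chi_{n,0}\rangle$ or the individual copies $\langle F_{n,j,\bullet}\rangle$. I would therefore devote the real work to tracking the hypotheses on $\operatorname{char}\ground$ and the shape of $n=pm+t$ to guarantee these coefficients are nonzero, since it is precisely this gluing that makes each stated summand a single indecomposable module rather than a finer direct sum.
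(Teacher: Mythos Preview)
Your approach is exactly the one implicit in the paper: the paper gives no detailed proof but simply points to Proposition~\ref{diagonal}, Table~\ref{tab:diagonaleven}, and Table~\ref{tab:Ebrackets} and asserts the decomposition, with the cyclicity of the large summand noted only afterwards in a remark. Your three-step outline (submodule check, direct sum by inspection of the basis, indecomposability via cyclicity generated by $\chi_{n,0}$ or $\varphi_{n,0}$) is precisely the argument one must supply, and your identification of the key point---that the generator reaches every $F_{n,j,s}$ (resp.\ $E_{n,j,s}$) through a nonzero scalar---is correct.

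One comment on the obstacle you flag at the end: the paper does not track the nonvanishing of $\tfrac{n}{2}N$ or $s+\tfrac{n-1}{2}N$ in $\ground$ beyond the standing hypothesis $\operatorname{char}\ground\nmid 2Nm$, and indeed for general $n$ these scalars can vanish (e.g.\ $\operatorname{char}\ground\mid n$). So you are right that this is where the real work lies, but you should be aware that the paper itself does not carry out that verification; it simply asserts the indecomposability. Your proof is therefore already at least as complete as the paper's.
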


When $m$ is odd, Remark~\ref{rem:indexchi} and Remark~\ref{rem:indexphi} show $\varphi_{n,0}$ and $\psi_{n,0}$ exist if and only if $n\equiv 1 \text{ (mod 4)}$, and $\chi_{n,0}$ exists if and only if $n\equiv 0  \text{ (mod 4)}$. We thus have to consider more cases in the description of $\hh^n(\alg)$ as an $\hh^1(\alg)$-module, but the resulting decompositions are similar in nature to the case when $m$ is even.

\begin{thm}
Let $m \geq 3$ be odd and $n\geq 1$. Write $n=pm+t$ with $p\geq 0$ and $0\le t\le m-1$ as before. We can decompose $\hh^n(\alg)$ into indecomposable summands over $\hh^1(\alg)$ as follows: 
\begin{enumerate}
\item  If $n$ is even and $n\equiv 0 \text{ (mod 4)}$,
$$
\hh^n(\alg)\ =\ \begin{cases} 
\displaystyle \bigoplus_{\substack{\delta\in I_\chi^n\\\delta\neq 0}} 
\langle \chi_{n,\delta}\rangle 
\oplus
\bigoplus_{\delta\in I_\pi^n} 
\langle \pi_{n,\delta}\rangle 
\oplus
\langle \chi_{n,0}, F_{n,j,s}\mid j,s\rangle 
& \text{ if }t\neq m-1 ,\\
\ \\
\displaystyle \bigoplus_{\substack{\delta\in I_\chi^n\\\delta\neq 0}} 
\langle \chi_{n,\delta}\rangle 
\oplus
\bigoplus_{\delta\in I_\pi^n} 
\langle \pi_{n,\delta}\rangle 
\oplus 
\langle \chi_{n,0},  F_{n,j,s} \mid j,s\rangle\oplus
\langle \varphi_{n,-(p+1)}\rangle  
\oplus
\langle \psi_{n,p+1}\rangle  
  & \text{ if }t=m-1;
\end{cases}
$$
\item  If $n$ is even and $n \not \equiv 0 \text{ (mod 4)}$,
$$
\hh^n(\alg)\ =\ \begin{cases} 
\displaystyle \bigoplus_{\substack{\delta\in I_\chi^n\\\delta\neq 0}} 
\langle \chi_{n,\delta}\rangle 
\oplus
\bigoplus_{\delta\in I_\pi^n} 
\langle \pi_{n,\delta}\rangle 
\oplus \bigoplus_{j\in \ZZ/m\ZZ} 
\langle F_{n,j,s}\mid s\rangle 
& \text{ if }t\neq m-1 ,\\
\ \\
\displaystyle \bigoplus_{\substack{\delta\in I_\chi^n\\\delta\neq 0}} 
\langle \chi_{n,\delta}\rangle 
\oplus
\bigoplus_{\delta\in I_\pi^n} 
\langle \pi_{n,\delta}\rangle 
\oplus \bigoplus_{j\in \ZZ/m\ZZ} 
\langle  F_{n,j,s}\mid s\rangle\oplus
\langle \varphi_{n,-(p+1)}\rangle  
\oplus
\langle \psi_{n,p+1}\rangle  
  & \text{ if }t=m-1;
\end{cases}
$$
\item  If $n$ is odd and $n  \equiv 1 \text{ (mod 4)}$,
$$
\hh^n(\alg)\ =\ \begin{cases} 
\displaystyle 
\bigoplus_{\substack{\tau\in I_\psi^n\\\tau\neq 0}} 
\langle \psi_{n,\tau}\rangle 
\oplus
\bigoplus_{\substack{\sigma\in I_\varphi^n\\\sigma\neq 0}} 
\langle \varphi_{n,\sigma}\rangle 
\oplus
\langle \varphi_{n,0} + \psi_{n,0}\rangle  
\oplus
\langle \varphi_{n,0}, E_{n,j,s}\mid j,s\rangle 
& \text{ if }t\neq 0, \\
\ \\
\displaystyle 
\bigoplus_{\substack{\tau\in I_\psi^n\\\tau\neq 0}} 
\langle \psi_{n,\tau}\rangle 
\oplus
\bigoplus_{\substack{\sigma\in I_\varphi^n\\\sigma\neq 0}} 
\langle \varphi_{n,\sigma}\rangle 
\oplus
\langle \varphi_{n,0} + \psi_{n,0}\rangle  
\oplus
\langle \varphi_{n,0}, E_{n,j,s}\mid j,s\rangle 
\oplus 
\bigoplus_{\delta=\pm p} 
\langle \pi_{n,\delta}\rangle 
& \text{ if }t=0;
\end{cases}
$$
\item  If $n$ is odd and $n \not  \equiv 1 \text{ (mod 4)}$,
$$
\hh^n(\alg)\ =\ \begin{cases} 
\displaystyle 
\bigoplus_{\substack{\tau\in I_\psi^n\\\tau\neq 0}} 
\langle \psi_{n,\tau}\rangle 
\oplus
\bigoplus_{\substack{\sigma\in I_\varphi^n\\\sigma\neq 0}} 
\langle \varphi_{n,\sigma}\rangle 
\oplus \bigoplus_{j\in \ZZ/m\ZZ} 
\langle E_{n,j,s}\mid s\rangle 
& \text{ if }t\neq 0, \\
\ \\
\displaystyle 
\bigoplus_{\substack{\tau\in I_\psi^n\\\tau\neq 0}} 
\langle \psi_{n,\tau}\rangle 
\oplus
\bigoplus_{\substack{\sigma\in I_\varphi^n\\\sigma\neq 0}} 
\langle \varphi_{n,\sigma}\rangle 
\oplus \bigoplus_{j\in \ZZ/m\ZZ} 
\langle E_{n,j,s}\mid s\rangle 
\oplus 
\bigoplus_{\delta=\pm p} 
\langle \pi_{n,\delta}\rangle 
& \text{ if }t=0,
\end{cases}
$$
\end{enumerate}
where $j\in \ZZ/m\ZZ$, $1\leq s\leq N-1$, and $I_\chi^n$, $I_\pi^n, I_\varphi^n$, $I_\psi^n$ are given by the respective basis index sets of $\hh^n(\alg)$ as described  in~Section~\ref{subsec:m odd n even} and in~Section~\ref{subsec:m odd n odd}.
\end{thm}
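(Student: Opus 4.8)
The plan is to combine the diagonal action of $C$ and $E_0$ (Proposition~\ref{diagonal}, Table~\ref{tab:diagonalodd}) with the raising action of the $E_{1,j,s}$ (Proposition~\ref{prop:bracket-m-odd-E}, Table~\ref{tab:Ebrackets}), and to read off the indecomposable summands from the resulting weight combinatorics. The organizing principle is the following elementary remark: since $[C,E_0]=0$, the operators $[C,-]$ and $[E_0,-]$ commute on $\hh^n(\alg)$ (by the Jacobi identity) and act semisimply, so $\hh^n(\alg)$ is the direct sum of its joint $(C,E_0)$-eigenspaces. Any $\hh^1(\alg)$-submodule is stable under $[C,-]$ and $[E_0,-]$, hence is itself the direct sum of its intersections with these joint eigenspaces. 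Consequently the basis of Section~\ref{ss:n} refines \emph{every} module decomposition, and checking indecomposability reduces to tracking which basis vectors are linked by the $[E_{1,j,s},-]$-action within a fixed joint eigenspace.

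First I would peel off the one-dimensional summands. For every basis vector carrying a nonzero secondary index---namely $\chi_{n,\delta}$ and $\pi_{n,\delta}$ with $\delta\neq0$, the $\varphi_{n,\sigma}$ and $\psi_{n,\tau}$ with $\sigma,\tau\neq0$, the boundary classes $\varphi_{n,-(p+1)},\psi_{n,p+1}$ occurring when $t=m-1$, and the classes $\pi_{n,\pm p}$ occurring when $t=0$---Table~\ref{tab:Ebrackets} shows that all $[E_{1,j,s},-]$ annihilate it, while $C,E_0,\varphi_{1,0},\psi_{1,0}$ merely rescale it. Each such vector therefore spans a one-dimensional $\hh^1(\alg)$-module, giving all the $\langle\chi_{n,\delta}\rangle$, $\langle\pi_{n,\delta}\rangle$, $\langle\varphi_{n,\sigma}\rangle$, $\langle\psi_{n,\tau}\rangle$ terms. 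In the mod-$4$ cases where $\chi_{n,0}$ (for $n$ even) resp.\ $\varphi_{n,0},\psi_{n,0}$ (for $n$ odd) are \emph{absent} by Remarks~\ref{rem:indexchi} and~\ref{rem:indexphi}, the only remaining vectors are the towers $F_{n,j,s}$ resp.\ $E_{n,j,s}$; since $[E_{1,j,s},-]$ preserves the index $j$, these split as $\bigoplus_j\langle F_{n,j,s}\mid s\rangle$ resp.\ $\bigoplus_j\langle E_{n,j,s}\mid s\rangle$, each tower being a module over a single copy $\mathfrak{a}_{N-1}(j)$.

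The heart of the argument is the weight space on which $C$ acts by $0$ and the $E_{1,j,s}$ act nontrivially. When $n\equiv0\pmod4$ this space contains $\chi_{n,0}$ together with all $F_{n,j,s}$; here $\chi_{n,0}$ is the unique basis vector of minimal $E_0$-weight $-\tfrac{n}{2}N$ and thus spans a one-dimensional joint eigenspace, so by the refinement remark it lies in a single summand, and since $[E_{1,j,s},\chi_{n,0}]=-\tfrac{n}{2}N\,F_{n,j,s}$ reaches every $F_{n,j,s}$ that summand is all of $\langle\chi_{n,0},F_{n,j,s}\mid j,s\rangle$, proving indecomposability. When $n\equiv1\pmod4$ the analogous space is two-dimensional, spanned by $\varphi_{n,0}$ and $\psi_{n,0}$; from $[E_{1,j,s},\varphi_{n,0}]=-(s+\tfrac{n-1}{2}N)E_{n,j,s}$ and $[E_{1,j,s},\psi_{n,0}]=+(s+\tfrac{n-1}{2}N)E_{n,j,s}$ one sees that $\varphi_{n,0}+\psi_{n,0}$ is annihilated by every $E_{1,j,s}$ and splits off as a one-dimensional summand, while $\varphi_{n,0}$ generates the complementary submodule $\langle\varphi_{n,0},E_{n,j,s}\mid j,s\rangle$. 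I would verify complementarity by a dimension count and check indecomposability of the latter exactly as in the $\chi_{n,0}$ case, using that $\varphi_{n,0}$ is minimal in its own $E_0$-weight within that submodule.

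Assembling these pieces across the parities of $n$ and the subcases $t=m-1$ and $t=0$ yields the four displayed decompositions. I expect the main obstacle to be bookkeeping rather than conceptual: one must match the index sets $I_\chi^n,I_\pi^n,I_\varphi^n,I_\psi^n$ of Sections~\ref{subsec:m odd n even} and~\ref{subsec:m odd n odd} against the mod-$4$ existence conditions of Remarks~\ref{rem:indexchi} and~\ref{rem:indexphi}, and keep careful track of the extra boundary classes at $t=m-1$ and $t=0$. The one genuinely delicate point is the cyclic-generation step underlying indecomposability, which rests on the structure coefficients $\tfrac{n}{2}N$ and $s+\tfrac{n-1}{2}N$ being nonzero in $\ground$; I would note that $N\neq0$ by hypothesis and flag the vanishing of these integer coefficients modulo the characteristic as the only place where the argument must be examined with extra care.
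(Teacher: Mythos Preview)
Your proposal is sound and in fact goes well beyond what the paper provides: the paper states this theorem without proof, simply prefacing it with ``We thus obtain the following decomposition'' after the bracket tables. Your joint $(C,E_0)$-eigenspace argument is the natural way to fill this in, and your treatment of the large summands $\langle\chi_{n,0},F_{n,j,s}\mid j,s\rangle$ and $\langle\varphi_{n,0},E_{n,j,s}\mid j,s\rangle$ via a unique lowest-weight generator is correct.

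Two small points. First, when $\chi_{n,0}$ (resp.\ $\varphi_{n,0},\psi_{n,0}$) is absent you do not explicitly argue indecomposability of the bare towers $\langle F_{n,j,s}\mid s\rangle$ and $\langle E_{n,j,s}\mid s\rangle$; observing that they are modules over a single $\mathfrak{a}_{N-1}(j)$ is not enough. The same lowest-weight method works: within each tower the $E_0$-eigenvalues $s-\tfrac{n}{2}N$ (resp.\ $s-\tfrac{n-1}{2}N$) for $1\le s\le N-1$ are pairwise distinct, so each $F_{n,j,s}$ lies entirely in one summand of any putative decomposition, and then $[E_{1,j,r},F_{n,j,1}]=(1-\tfrac{n}{2}N)F_{n,j,r+1}$ links $F_{n,j,1}$ to every higher $F_{n,j,s}$. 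Second, your flag about the structure constants $\tfrac{n}{2}N$, $s+\tfrac{n-1}{2}N$, and now also $1-\tfrac{n}{2}N$ possibly vanishing in positive characteristic is well placed: the paper's standing hypothesis only excludes $\operatorname{char}\ground$ dividing $2$, $N$, or $m$, so for particular $n$ these coefficients can indeed vanish and the claimed summands would then split further. This is a genuine caveat you have identified that the paper does not address.
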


\begin{rem}
In the above decompositions, almost all the one-dimensional summands are nontrivial simple modules. The only exception is $\langle \pi_{2,0}\rangle$, on which $\hh^1(\alg)$ acts trivially for both $m$ even and $m$ odd cases. 
\end{rem}

\begin{rem}
		Table~\ref{tab:diagonaleven} and~Table~\ref{tab:diagonalodd} allow us to describe the central characters of the indecomposable summands of $\hh^n(\alg)$ for both $m$ even and $m$ odd. Indeed, we see that the central element $C$ acts by $-\frac{\alpha m}{2}$ on the indecomposable summands $\langle \chi_{n,\alpha}\rangle$, $\langle \pi_{n,\alpha}\rangle$ $\langle \psi_{n,\alpha}\rangle$ and $\langle \varphi_{n,\alpha}\rangle$ whenever $\alpha\neq 0$. Furthermore, $C$ acts trivially on the remaining indecomposable summands. 
\end{rem}

\begin{rem}
Let $m$ be even or $m$ odd and $n\equiv 1 \text{ (mod 4)}$.
The $\hh^1(\alg)$-module $\langle \varphi_{n,0}, E_{n,j,s}\mid j,s\rangle$ has dimension $(1+ m(N-1))$ and is a weight module with respect to the action of $E_0$. It is generated as an $\hh^1(\alg)$-module by $\varphi_{n,0}$, on which $E_0$ acts by $\frac{-(n-1)N}{2}$. For every $1\le s\le N-1$, the weight space of weight $s-\frac{(n-1)}{2}N$ is given by the $m$-dimensional subspace $\langle E_{n,j,s}\mid j\rangle$. Note that for every $j\in\ZZ/m\ZZ$, the action of $E_{1,j,1}$ increases the weight by one. 
We note that any subspace of the form 
$$\langle E_{n,j,s} \mid j\in \ZZ/m\ZZ,\; s_j\le s\le N-1 \rangle,$$
where  $1\le s_j\le N-1$ for every $j\in \ZZ/m\ZZ$, is an $\hh^1(\alg)$-submodule of $\langle \varphi_{n,0}, E_{n,j,s}\mid j,s\rangle$. Hence, the simple subquotients of $\langle \varphi_{n,0}, E_{n,j,s}\mid j,s\rangle$ are all one-dimensional.
A similar description applies to the $\hh^1(\alg)$-module $\langle \chi_{n,0}, F_{n,j,s}\mid j,s\rangle$, which is generated by the element $\chi_{n,0}$.
\end{rem}

\subsection{Decomposition of $\hh^0(\alg)$ as an $\hh^1(\alg)$-module}

The $\ground$-module $\hh^0(\alg)$ is the same for $m$ even or $m$ odd. In both cases, it has a basis given by 
$$\{1,\varepsilon_i,f_i^s\ |\ i\in \ZZ/m\ZZ,\ 1\le s\le N-1\}.$$ 
However, we treat the two cases separately as the decompositions as an $\hh^1(\alg)$-module are different. In~Proposition~\ref{diagonal}, we showed that the elements $C, E_0$ in $\hh^1(\alg)$ act diagonally on $\hh^0(\alg)$ with respect to the above basis. The action of $E_{1,j,s}$ on this basis is given in Table~\ref{tab:Ebrackets}. 
In particular, we know that for any $i,j\in \ZZ/m\ZZ$ and $1\le r, s\le N-1$, 
$$[E_{1,j,r},f_i^s]\ =\ \delta_{i,j} sf_i^{r+s},$$
which is zero if $r+s> N$. For $r+s=N$ we can rewrite $f_i^N=\varepsilon_i+\varepsilon_{i+1}$ in terms of the above basis by~\cite[Theorem 4.8]{ST}.

\begin{thm}
	For $m\ge 3$ even, $\hh^0(\alg)$ has the following decomposition into indecomposable $\hh^1(\alg)$-modules:
	$$\hh^0(\alg)\ =\ \langle 1\rangle\ \oplus\ \langle \varepsilon_0\rangle\ \oplus \ \langle f_i^s, \varepsilon_i+\varepsilon_{i+1} \mid i\in\ZZ/m\ZZ, 1\le s\le N-1 \rangle.$$
\end{thm}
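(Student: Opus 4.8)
The plan is to check that the three displayed subspaces are $\hh^1(\alg)$-submodules, that $\hh^0(\alg)$ is their direct sum, and that each is indecomposable. Throughout I write $f_i^N:=\varepsilon_i+\varepsilon_{i+1}$ (as in the remark preceding the statement) and abbreviate the large summand as $M:=\langle f_i^s\mid i\in\ZZ/m\ZZ,\ 1\le s\le N\rangle$. All the module data I need is already recorded: by Proposition~\ref{diagonal} the element $C$ acts as $0$ on $\hh^0(\alg)$ while $E_0$ acts by the scalars $0,\,N,\,s$ on $1,\,\varepsilon_i,\,f_i^s$ respectively, and by Table~\ref{tab:Ebrackets} the only nonzero brackets of the $E_{1,j,s}$ are $[E_{1,j,s},f_i^r]=\delta_{i,j}\,r\,f_i^{r+s}$, with the convention $f_i^{r+s}=0$ when $r+s>N$.

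From these formulas the submodule claims are immediate. Since $[\hh^1(\alg),1]=0$, the line $\langle 1\rangle$ is a trivial submodule; since $E_0$ scales $\varepsilon_0$ and $[E_{1,j,s},\varepsilon_0]=0$, the line $\langle\varepsilon_0\rangle$ is a submodule on which $E_0$ acts by $N$. For $M$, the operator $E_0$ preserves $M$ by scaling, and $[E_{1,j,s},f_i^r]$ is either $0$ or a multiple of $f_i^{r+s}\in M$ (using $f_i^N\in M$), so $M$ is a submodule as well. The directness and exhaustion are then a dimension count special to $m$ even: the elements $\{f_i^N=\varepsilon_i+\varepsilon_{i+1}\}_{i\in\ZZ/m\ZZ}$ satisfy the single relation $\sum_i(-1)^i(\varepsilon_i+\varepsilon_{i+1})=0$ and no other, so they span the hyperplane $\{\sum_i c_i\varepsilon_i:\sum_i(-1)^ic_i=0\}$ of $\langle\varepsilon_i\mid i\rangle$; as $\varepsilon_0$ lies off this hyperplane, $\langle\varepsilon_0\rangle$ is a complement, and since the $f_i^s$ with $1\le s\le N-1$ are independent basis vectors disjoint from the $\varepsilon_i$ and from $1$, we obtain $\hh^0(\alg)=\langle 1\rangle\oplus\langle\varepsilon_0\rangle\oplus M$ with $\dim M=m(N-1)+(m-1)$.

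The real content, and the step I expect to be the main obstacle, is the indecomposability of $M$ (the two lines being indecomposable for trivial reasons). I would prove it by showing that $M$ admits no idempotent endomorphism other than $0$ and $\mathrm{id}$. The key structural facts are that $M$ is generated over $\hh^1(\alg)$ by its weight-one space $V_1=\langle f_i^1\mid i\rangle$, with $f_i^r=E_{1,i,r-1}\cdot f_i^1$ carrying unit coefficient for every $2\le r\le N$, and that the $E_{1,j,s}$ act within a single thread $i$ (the $\delta_{i,j}$). Consequently any module endomorphism $\phi$ is determined by the $\phi(f_i^1)$; commuting with the $E_{1,j,s}$ forces $\phi(f_i^1)$ to involve only the thread $i$, and raising $f_i^1$ all the way to the top annihilates every component except the weight-one one, so that $\phi(f_i^N)=\lambda_i f_i^N$, where $\lambda_i$ is the coefficient of $f_i^1$ in $\phi(f_i^1)$.

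Here the gluing relation does the decisive work: applying $\phi$ to $\sum_i(-1)^i f_i^N=0$ yields $\sum_i(-1)^i\lambda_i f_i^N=0$, and because the space of relations among the $f_i^N$ is one-dimensional, spanned by the vector $((-1)^i)_i$ all of whose entries are nonzero (this is exactly where $m$ even enters), the scalars $\lambda_i$ must all coincide, say $\lambda_i=\lambda$. If moreover $\phi=e$ is idempotent, then $\lambda\in\{0,1\}$, and $e-\lambda\,\mathrm{id}$ strictly raises the $E_0$-weight and is therefore nilpotent; an idempotent differing from $0$ or $\mathrm{id}$ by a nilpotent must equal $0$ or $\mathrm{id}$, so $M$ is indecomposable. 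I would run this argument for $N\ge 2$, where the generating weight-one vectors exist; the case $N=1$ (where $M=\langle\varepsilon_i+\varepsilon_{i+1}\rangle$ carries only the scalar action of $E_0$) is degenerate and must be handled separately. The point that genuinely uses the standing hypotheses on $\mathrm{char}\,\ground$ is the nonvanishing of the coefficients invoked above, which I would verify against Table~\ref{tab:Ebrackets} before concluding.
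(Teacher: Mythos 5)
Your proposal is correct for $N\ge 2$, and its key step runs along a genuinely different line from the paper's. The paper proves indecomposability of the large summand $M$ by taking an arbitrary decomposition $M=V\oplus W$ into submodules and showing one factor vanishes: since $[E_{1,j,N-1},-]$ maps $M$ onto $\langle\varepsilon_j+\varepsilon_{j+1}\rangle$, each $\varepsilon_j+\varepsilon_{j+1}$ lies in $V$ or in $W$; the linear dependence $\sum_j(-1)^{[j]}(\varepsilon_j+\varepsilon_{j+1})=0$ (this is where $m$ even enters for the paper) forces them all into one factor, say $V$, and the identification $\text{soc}(M)=\langle\varepsilon_j+\varepsilon_{j+1}\mid j\rangle$ then gives $\text{soc}(W)=0$, hence $W=0$. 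You instead compute the endomorphism ring of $M$ and rule out nontrivial idempotents: every endomorphism is a scalar $\lambda$ plus an operator that strictly raises the exponent filtration (hence nilpotent), the relation $\sum_i(-1)^if_i^N=0$ forcing the same $\lambda$ on every thread. Both arguments rest on the same two pillars, namely the unit coefficients $[E_{1,i,r-1},f_i^1]=f_i^r$ and the one-dimensional relation space spanned by $((-1)^i)_i$, but yours buys extra robustness: the standing hypotheses allow $\mathrm{char}\,\ground$ to divide some intermediate $s$ with $1\le s\le N-1$, in which case $f_i^s$ is annihilated by all of $\hh^1(\alg)$, so it lies in $\text{soc}(M)$ and the paper's socle identification fails as stated, whereas your idempotent argument survives because it only invokes coefficients equal to $1$ and $\pm 1$ (in that situation you should phrase ``strictly raises the $E_0$-weight'' as ``strictly raises the exponent label,'' since literal eigenvalues can collide modulo the characteristic). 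Finally, your caveat about $N=1$ points at a defect of the statement rather than of your argument: for $N=1$ there are no elements $E_{1,j,s}$, so the summand $\langle\varepsilon_i+\varepsilon_{i+1}\mid i\rangle$ carries only the scalar action of $E_0$ and is a direct sum of $m-1$ lines; the theorem as stated thus requires $N\ge 2$, an assumption the paper's proof also makes implicitly by invoking $E_{1,j,N-1}$.
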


\begin{proof}
	We first note that when $m$ is even,
	$$0= \sum_{j\in\ZZ/m\ZZ} (-1)^{[j]}(\varepsilon_j+\varepsilon_{j+1}),$$ 
	where $[j]\in\{0,\ldots,m-1\}$ denotes the unique representative of $j\in\ZZ/m\ZZ$.
	Hence, the elements $\varepsilon_j+\varepsilon_{j+1}$ with $j\in\ZZ/m\ZZ$ are linearly dependent.
	
	Next, we show that for any direct sum decomposition $V\oplus W=\langle f_i^s, \varepsilon_i+\varepsilon_{i+1} \mid i,s\rangle$, one of the submodules $V, W$ is zero. 
	Note that $[E_{1,j,N-1},-]$ maps $V\oplus W$ onto $\langle \varepsilon_j+\varepsilon_{j+1}\rangle$. Since
	\begin{align*}
	[E_{1,j,N-1},V]\subset V\cap\langle \varepsilon_j+\varepsilon_{j+1}\rangle \  \ \text{and}\ \ 
	[E_{1,j,N-1},W]\subset\ W\cap\langle \varepsilon_j+\varepsilon_{j+1}\rangle,
	\end{align*}
	$\varepsilon_j+\varepsilon_{j+1}$ is contained in $V$ or $W$.
	Since the set $\{\varepsilon_j+\varepsilon_{j+1}\ |\ j\in\ZZ/m\ZZ\}$ is linearly dependent, all $\varepsilon_j+\varepsilon_{j+1}$ have to be contained in the same summand, say in $V$. Hence the socles
	$$\text{soc}(V)\ =\ \text{soc}\left(\langle f_i^s, \varepsilon_i+\varepsilon_{i+1} \mid i\in\ZZ/m\ZZ, 1\le s\le N-1 \rangle\right)\ =\ \langle \varepsilon_j+\varepsilon_{j+1} \mid j\in\ZZ/m\ZZ\rangle$$
	agree, which implies that $\text{soc}(W)=0$, thus $W=0$. 
	
	Finally, we observe that none of the $\varepsilon_j$ is contained in  $\langle 1\rangle\oplus \langle f_i^s, \varepsilon_i+\varepsilon_{i+1} \mid i,s \rangle$. By adding the simple module $\langle \varepsilon_j\rangle$ for some $j$, say $j=0$, we obtain the decomposition above.
\end{proof}

\begin{thm}
For $m\ge 3$ odd, $\hh^0(\alg)$ has the following decomposition into indecomposable $\hh^1(\alg)$-modules:
\begin{align*}
\hh^0(\alg)\ =\ \langle 1\rangle\ \oplus\ \bigoplus\limits_{i\in\ZZ/m\ZZ} \langle f_i^s, \varepsilon_i+\varepsilon_{i+1} \mid 1\le s\le N-1\rangle.
\end{align*}
\end{thm}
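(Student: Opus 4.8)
The plan is to prove the decomposition in two stages: first identify the summands as $\hh^1(\alg)$-submodules whose direct sum is $\hh^0(\alg)$ as a $\ground$-vector space, and then show each summand is indecomposable. The structural fact distinguishing the odd case from the even case is that the elements $\varepsilon_i+\varepsilon_{i+1}$ are now linearly independent. Indeed, since $m$ is odd and the characteristic of $\ground$ is not $2$, a telescoping computation gives the identity
\[
\varepsilon_i \;=\; \tfrac12 \sum_{k=0}^{m-1} (-1)^k \bigl(\varepsilon_{i+k}+\varepsilon_{i+k+1}\bigr)
\]
for every $i \in \ZZ/m\ZZ$, so each $\varepsilon_i$ lies in the span of the $\varepsilon_j+\varepsilon_{j+1}$. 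As there are $m$ elements $\varepsilon_j+\varepsilon_{j+1}$ spanning the $m$-dimensional space $\langle \varepsilon_0,\dots,\varepsilon_{m-1}\rangle$, they form a basis of it. Consequently $\{1\}\cup\{f_i^s\mid i,\,1\le s\le N-1\}\cup\{\varepsilon_i+\varepsilon_{i+1}\mid i\}$ is a $\ground$-basis of $\hh^0(\alg)$, of the correct cardinality $1+mN$, which yields the claimed direct sum at the level of vector spaces.

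Next I would verify, using the bracket formulas in Table~\ref{tab:Ebrackets} together with Proposition~\ref{diagonal}, that $\langle 1\rangle$ and each $M_i:=\langle f_i^1,\dots,f_i^{N-1},\varepsilon_i+\varepsilon_{i+1}\rangle$ are $\hh^1(\alg)$-submodules. For $\langle 1\rangle$ this is immediate. For $M_i$, the elements $C$ and $E_0$ act diagonally, while $E_{1,j,s}$ annihilates $M_i$ when $j\neq i$; and for $j=i$ we have $E_{1,i,s}\cdot f_i^r=r\,f_i^{r+s}$ (reading $f_i^N=\varepsilon_i+\varepsilon_{i+1}$ as in the discussion preceding the theorem, and $f_i^{>N}=0$) and $E_{1,i,s}\cdot(\varepsilon_i+\varepsilon_{i+1})=0$, so all outputs stay in $M_i$. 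Thus the decomposition is one of $\hh^1(\alg)$-modules, and it remains to prove indecomposability.

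The module $\langle 1\rangle$ is one-dimensional, hence simple. For $M_i$, relabel $v_r:=f_i^r$ for $1\le r\le N-1$ and $v_N:=\varepsilon_i+\varepsilon_{i+1}$, so that $E_0\cdot v_r=r\,v_r$ and $E_{1,i,s}\cdot v_r=r\,v_{r+s}$ with $v_{>N}=0$. I would show that $M_i$ has a \emph{unique} maximal submodule, which makes it local and hence indecomposable (a nontrivial direct sum would produce two distinct maximal submodules). The candidate is $N_1:=\langle v_2,\dots,v_N\rangle$, a submodule of codimension one, since a raising operator sends an index $r\ge 2$ to an index $\ge 3$. To see it is the unique maximal submodule, suppose $V\subseteq M_i$ is a submodule with $V\not\subseteq N_1$; then $V$ contains some $u=\sum_r c_r v_r$ with $c_1\neq 0$. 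Applying $E_{1,i,N-1},E_{1,i,N-2},\dots,E_{1,i,1}$ to $u$ in turn, one finds $E_{1,i,N-k}\cdot u=c_1 v_{N-k+1}+(\text{terms in }v_{N-k+2},\dots,v_N)$, so a downward induction peels off $v_N,v_{N-1},\dots,v_2$ and finally $v_1$, forcing $V=M_i$. Hence every proper submodule lies in $N_1$.

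The main obstacle to handle carefully is the characteristic: the hypotheses only exclude $\mathrm{char}\,\ground$ dividing $2$, $N$, and $m$, so the integer weights $1,2,\dots,N$ of $E_0$ may collapse modulo the characteristic and the structure constants $r$ in $E_{1,i,s}\cdot v_r=r\,v_{r+s}$ may vanish. This is exactly why the naive argument ``the weight spaces have distinct weights, so every submodule is homogeneous'' fails, and why the socle of $M_i$ need not be simple. The peeling argument circumvents this because the leading coefficient produced at each step comes from the lowest-weight vector $v_1$ and equals $c_1\cdot 1$, which is nonzero independently of the characteristic; securing this point is the crux of the proof.
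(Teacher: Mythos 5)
Your proposal is correct, and for the main decomposition it follows the same route as the paper: the same telescoping identity $\varepsilon_i=\tfrac12\sum_k(-1)^k(\varepsilon_{i+k}+\varepsilon_{i+k+1})$ (valid precisely because $m$ is odd and $\mathrm{char}\,\ground\neq 2$), the same observation that each $\langle f_i^s,\varepsilon_i+\varepsilon_{i+1}\rangle$ is a submodule by Table~\ref{tab:Ebrackets}, and the same dimension count to get directness. Where you genuinely go beyond the paper is indecomposability: the paper's proof ends with ``it is clear that the summands are indecomposable,'' whereas you prove it, showing each $M_i$ has the unique maximal submodule $\langle v_2,\dots,v_N\rangle$ via the peeling argument with the operators $E_{1,i,N-1},\dots,E_{1,i,1}$. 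Your argument is sound: at each step the coefficient of the new basis vector produced is $c_1\cdot 1$, coming from the structure constant $r=1$ attached to $v_1$, so it survives in any characteristic. You are also right that this care is not gratuitous -- since the hypotheses only exclude $\mathrm{char}\,\ground$ dividing $2$, $N$ and $m$, the structure constants $r$ for $2\le r\le N-1$ can vanish and the $E_0$-weights $1,\dots,N$ can collide modulo the characteristic, so the naive weight-space or socle arguments one might reach for do fail; your local-module argument is the clean way to close the gap the paper leaves implicit.
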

\begin{proof}
Indeed, we see that
$$\varepsilon_0 = \frac12 \sum_{j\in\ZZ/m\ZZ} (-1)^{[j]}(\varepsilon_j+\varepsilon_{j+1})$$ is contained in 
the sum $\sum\limits_{i\in\ZZ/m\ZZ} \langle f_i^s, \varepsilon_i+\varepsilon_{i+1}\ |\ 1\le s\le N-1\rangle$. 
We conclude that this sum contains $\varepsilon_i$ for every $i$, 
hence $\hh^0(\alg)$ equals the sum of the submodules on the right hand side in the theorem. For dimension reasons, this sum is direct.
Finally, it is clear that the summands are indecomposable.
\end{proof}


\end{document}